\documentclass[12pt]{article} 

\usepackage{amsmath}
\usepackage{amsthm}
\usepackage{amsfonts}
\usepackage{mathrsfs}
\usepackage{stmaryrd}
\usepackage{setspace}
\usepackage{fullpage}
\usepackage{amssymb}
\usepackage{breqn}
\usepackage{enumitem}
\usepackage{bbold} 
\usepackage{authblk}
\usepackage{comment}
\usepackage{hyperref}
\usepackage{pgf,tikz}
\usepackage{graphicx}
\usetikzlibrary{decorations.pathreplacing,arrows}
\tikzstyle{vertex}=[circle,draw=black,fill=black,inner sep=0,minimum size=3pt,text=white,font=\footnotesize]

\bibliographystyle{plain}

\newtheorem{thm}{Theorem}[section]%[chapter]

\newtheorem{lemma}[thm]{Lemma}

\newtheorem{proposition}[thm]{Proposition}

\newtheorem{corollary}[thm]{Corollary}

\newtheorem{clm}[thm]{Claim}

\newtheorem*{lemma*}{Lemma}
\newtheorem*{proposition*}{Proposition}
\newtheorem*{theorem*}{Theorem}

\newcommand\ex{\ensuremath{\mathrm{ex}}}

\newcommand\cD{{\mathcal D}}

\newcommand\cH{{\mathcal H}}

\newcommand\cN{{\mathcal N}}

\newcommand\cT{{\mathcal T}}

\newcommand{\ignore}[1]{}

\title{On degree powers and counting stars in $F$-free graphs}
\author{Dániel Gerbner\\ \small Alfr\'ed R\'enyi Institute of Mathematics, HUN-REN\\
\small \texttt{gerbner.daniel@renyi.hu}}
\date{}

\begin{document}

\maketitle

\begin{abstract} Given a positive integer $r$ and  a graph $G$ with degree sequence $d_1,\dots,d_n$, we define $e_r(G)=\sum_{i=1}^n d_i^r$. We let $\ex_r(n,F)$ be the largest value of $e_r(G)$ if $G$ is an $n$-vertex $F$-free graph. We show that if $F$ has a color-critical edge, then $\ex_r(n,F)=e_r(G)$ for a complete $(\chi(F)-1)$-partite graph $G$ (this was known for cliques and $C_5$). We obtain exact results for several other non-bipartite graphs and also determine $\ex_r(n,C_4)$ for $r\ge 3$. We also give simple proofs of multiple known results.

Our key observation is the connection to $\ex(n,S_r,F)$, which is the largest number of copies of $S_r$ in $n$-vertex $F$-free graphs, where $S_r$ is the star with $r$ leaves. We explore this connection and apply methods from the study of $\ex(n,S_r,F)$ to prove our results. We also obtain several new results on $\ex(n,S_r,F)$.

%We obtain several other exact results on $\ex_r(n,F)$ and also on $\ex(n,S_r,F)$.

%This is closely related to $\ex(n,S_r,F)$, which is the largest number of copies of $S_r$ in $n$-vertex $F$-free graphs, where $S_r$ is the star with $r$ leaves. Both problems have attracted several researchers, yet these areas developed independently, with multiple results implying each other. One of our goals is to explore this connection.

%This connection implies some new results and give simpler proofs of some old results on $\ex_r(n,F)$. In addition, we determine $\ex_r(n,C_4)$ for $r\ge 3$ and show that if $F$ has a color-critical edge, then $\ex_r(n,F)=e_r(G)$ for a complete $(\chi(F)-1)$-partite graph $G$ (the case $F=C_5$ was shown by Gu, Li and Shi). We obtain several other exact results on $\ex_r(n,F)$ and also on $\ex(n,S_r,F)$.
%We also prove a conjecture of Lan, Liu, Qin and Shi by determining $\ex_r(n,F)$ for broom graphs $F$ if $n$ is sufficiently large, we determine $\ex_r(n,K_{s,t})$ if $r>s,t$ and $n$ is sufficiently large, we give a simpler proof of a theorem of Chang, Chen and Zhang on the value of $e_r(G)$ in $C_4$-free graphs, and we give a simpler proof of a theorem of Nikiforov on $ex_r(n,C_{2k})$.
\end{abstract}

\section{Introduction}

Given a positive integer $r$ and  a graph $G$ with degree sequence $d_1,\dots,d_n$, we define $e_r(G)=\sum_{i=1}^n d_i^r$.
This is a well-studied graph parameter, especially in chemical graph theory, see the survey \cite{agmm}. There the quantity $\sum_{i=1}^n d_i^{\alpha}$ is called the \textit{general zeroth order Randi\'c index}, but the names \textit{first general Zagreb
index} and \textit{variable first Zagreb index} are also used in the literature. Section 2.4 of the survey \cite{agmm} deals with the maximum value of this index among $n$-vertex $F$-free graphs. This line of research was initiated by Caro and Yuster \cite{cy}. 

Let us denote by $\ex_r(n,F)$ the largest value of $e_r(G)$ among $n$-vertex $F$-free graphs $G$. Then $\ex_1(n,F)=2\ex(n,F)$, where $\ex(n,F)$ is the \textit{Tur\'an number}, the largest number of edges in $n$-vertex $F$-free graphs. The research on $\ex(n,F)$ was initiated by Turán \cite{T}, who showed that $\ex(n,K_{k+1})=|E(T(n,k))|$, where $T(n,k)$ is the complete $k$-partite graph with parts of order $\lfloor n/k\rfloor$ or $\lceil n/k\rceil$.

Another well-studied topic is called \textit{generalized Turán problem} and deals with counting copies of another subgraph $H$. Let $\cN(H,G)$ denote the number of copies of $H$ in $G$, and $\ex(n,H,F)$ denote the largest value of $\cN(H,G)$ among $n$-vertex $F$-free graphs $G$. The first generalized Turán result is due to Zykov \cite{zykov}, while the systematic study of these problems has only recently been initiated by Alon and Shikhelman \cite{AS}, and has attracted several researchers since. 

Regarding all the parameters mentioned above, we say that an $n$-vertex $F$-free graph is \textit{extremal} if it has the largest possible value of that parameter. Caro and Yuster \cite{cy} observed that for $\ex_r(n,K_{k+1})$ each extremal graph is a complete $k$-partite graph $T$ but not necessarily the Tur\'an graph. Finding the optimal orders of the classes is a problem we are unable to deal with in this generality; such optimization was carried out in \cite{bolnik,lish,cnr,brosid} in different settings. In this paper we avoid this and consider a problem essentially solved if we show that a complete $k$-partite graph (or a complete $k$-partite graph with some additional edges) is extremal.

Observe that for the star $S_r$ with $r$ leaves, we have $\cN(S_r,G)=\sum_{i=1}^n \binom{d_i}{r}$. Therefore, the terms containing $d_i$ in $\cN(S_r,G)$ and $e_r(G)$ are polynomials of the same degree. They differ in the multiplicative constant $1/r!$, and a lower order term, if $d_i$ is sufficiently large. Note that $d_i$ can be smaller than $r$, in which case it gives a term 0 to $\cN(S_r,G)$ and a constant term to $e_r(G)$. However, if $F$ is not a star, then both parameters are $\Omega(n^r)$, as shown by the $F$-free graph $S_{n-1}$. Therefore, the vertices with $d_i=O(1)$ contribute a lower order term anyway. 

If $F$ is a star $S_p$, then it is easy to see that for the two parameters the extremal graph is the same (any $(p-1)$-regular $n$-vertex graph if $n(p-1)$ is even, and any $n$-vertex graph with one vertex of degree $p-2$ and $n-1$ vertices of degree $p-1$ if $n(p-1)$ is odd).
This implies the following.

\begin{proposition}\label{csill} 
We have $\ex(n,S_r,F)=\left(\frac{1}{r!}+o(1)\right)\ex_r(n,F)$. Moreover, the same graphs are asymptotically extremal for both functions.
\end{proposition}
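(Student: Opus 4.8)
The plan is to compare the two quantities through the identities $\cN(S_r,G)=\sum_{i=1}^n\binom{d_i}{r}$ and $e_r(G)=\sum_{i=1}^n d_i^r$, using that $\binom dr=\frac1{r!}d(d-1)\cdots(d-r+1)$ agrees with $\frac1{r!}d^r$ up to a polynomial of degree $r-1$. The case $r=1$ is trivial ($\cN(S_1,G)=e_1(G)$ and $1/1!=1$), so assume $r\ge 2$. Set $P_r(d)=d^r-d(d-1)\cdots(d-r+1)$; for every integer $d\ge 0$ one has $0\le P_r(d)\le c_r(d+1)^{r-1}$ for a suitable constant $c_r$ (when $d<r$ the product vanishes, so $P_r(d)=d^r=O(1)$; when $d\ge r$ it is a genuine polynomial of degree $r-1$ in $d$). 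Summing over the vertices gives, for every graph $G$,
\[
0\ \le\ \frac1{r!}e_r(G)-\cN(S_r,G)\ \le\ \frac{c_r}{r!}\sum_{i=1}^n(d_i+1)^{r-1}.
\]
The left inequality alone yields $\ex(n,S_r,F)\le\frac1{r!}\ex_r(n,F)$ for every $F$.

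Next I would reduce to the non-trivial case. If $F$ is a star $S_p$, then every $F$-free graph has maximum degree at most $p-1$, and since $d\mapsto d^r$ and $d\mapsto\binom dr$ are both nondecreasing on the nonnegative integers, the (near-)$(p-1)$-regular graphs recalled just before the proposition are simultaneously extremal for $e_r$ and for $\cN(S_r,\cdot)$; the statement follows by inspection. So assume from now on that $F$ is not a star. Then $S_{n-1}$ is $F$-free (as already noted in the introduction, every subgraph of a star is a star together with isolated vertices), so $\ex_r(n,F)\ge e_r(S_{n-1})=(n-1)^r+(n-1)\ge\frac12 n^r$ for $n$ large, and likewise $\ex(n,S_r,F)\ge\binom{n-1}{r}=\Theta(n^r)$.

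Now I would turn the right-hand side of the displayed inequality into a lower-order term. Bounding $\sum_i(d_i+1)^{r-1}\le 2^{r-1}\bigl(\sum_i d_i^{r-1}+n\bigr)$ and applying Hölder's inequality (equivalently the power-mean inequality), $\sum_i d_i^{r-1}\le n^{1/r}\bigl(\sum_i d_i^r\bigr)^{(r-1)/r}=n^{1/r}e_r(G)^{(r-1)/r}$, so that for a constant $C_r$,
\[
0\ \le\ \frac1{r!}e_r(G)-\cN(S_r,G)\ \le\ C_r\bigl(n^{1/r}e_r(G)^{(r-1)/r}+n\bigr).
\]
Whenever $e_r(G)\ge\frac12 n^r$, the right-hand side equals $\bigl(n/e_r(G)\bigr)^{1/r}e_r(G)+n=o\bigl(e_r(G)\bigr)$. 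Applying this to an $F$-free graph $G^*_n$ with $e_r(G^*_n)=\ex_r(n,F)$ gives $\ex(n,S_r,F)\ge\cN(S_r,G^*_n)\ge\frac1{r!}\ex_r(n,F)-o\bigl(\ex_r(n,F)\bigr)$, and together with the upper bound of the first paragraph this is precisely $\ex(n,S_r,F)=\bigl(\frac1{r!}+o(1)\bigr)\ex_r(n,F)$. For the ``moreover'' part, this identity gives $\ex(n,S_r,F)=\Theta\bigl(\ex_r(n,F)\bigr)$, so an $o$ of one of the two quantities is an $o$ of the other: the graph $G^*_n$ satisfies $\cN(S_r,G^*_n)\ge\ex(n,S_r,F)-o\bigl(\ex(n,S_r,F)\bigr)$ and is therefore asymptotically extremal for $\cN(S_r,\cdot)$, while any graph $H^*_n$ maximising $\cN(S_r,\cdot)$ satisfies $e_r(H^*_n)\ge r!\,\cN(S_r,H^*_n)=r!\,\ex(n,S_r,F)=(1-o(1))\,\ex_r(n,F)$ and is thus asymptotically extremal for $e_r$.

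The main obstacle is making the error term genuinely of smaller order: this is exactly where one uses that, for non-star $F$, the extremal graph already has $e_r=\Omega(n^r)$, so that the error $n^{1/r}e_r(G)^{(r-1)/r}=\bigl(n/e_r(G)\bigr)^{1/r}\cdot e_r(G)$ is $o(e_r(G))$; one must also check that the two extremal values are within a constant factor, so that a term small relative to one is small relative to the other. The polynomial estimate for $P_r$, the Hölder step, and the treatment of vertices of bounded degree are all routine.
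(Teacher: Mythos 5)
Your argument is essentially the paper's own (informal) justification made rigorous: compare $\cN(S_r,G)=\sum_i\binom{d_i}{r}$ with $\frac1{r!}e_r(G)$ termwise, observe the difference is a degree-$(r-1)$ polynomial in $d_i$, and use the $F$-free graph $S_{n-1}$ to show both extremal values are $\Omega(n^r)$ when $F$ is not a star, so the error is lower order; the Hölder step is a clean way to make "lower order" precise, and the two-sided transfer in your last paragraph correctly delivers the "moreover" clause. One caveat, which you inherit from the paper rather than introduce: the reduction "if $F$ is a star the statement follows by inspection" is not right for the first identity. For $F=S_p$ with $p$ fixed and $r\ge 2$, both quantities are $\Theta(n)$ at best and the ratio tends to $\binom{p-1}{r}/(p-1)^r\neq 1/r!$ (indeed $\ex(n,S_r,S_p)=0$ while $\ex_r(n,S_p)=\Theta(n)$ when $p\le r$), so only the "moreover" part survives in the star case; the asymptotic identity genuinely requires $F$ not to be a star, and your proof of that case is the one that matters.
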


The equivalence of multiple old results are implied by this connection, and also some new results on $\ex_r(n,F)$. We will discuss them in Section \ref{two}.

There are several papers in both settings that focus on obtaining exact results, which Proposition \ref{csill} cannot give. However, we can show a stronger connection between degree powers and counting stars.

\begin{proposition}\label{exac}
    There are positive numbers $w_p=w_p(r)$ such that $e_r(G)=\sum_{p=1}^r w_p\cN(S_p,G)$.
\end{proposition}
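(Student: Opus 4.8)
The plan is to reduce the identity to a one-variable polynomial statement. Recall from the excerpt that $e_r(G)=\sum_{i=1}^n d_i^r$ and $\cN(S_p,G)=\sum_{i=1}^n\binom{d_i}{p}$. Hence it suffices to produce positive reals $w_1,\dots,w_r$, depending only on $r$, such that the polynomial identity
\[
x^r=\sum_{p=1}^r w_p\binom{x}{p}
\]
holds; substituting $x=d_i$ for each vertex $i$ and summing over $i$ then gives
\[
e_r(G)=\sum_{i=1}^n\sum_{p=1}^r w_p\binom{d_i}{p}=\sum_{p=1}^r w_p\sum_{i=1}^n\binom{d_i}{p}=\sum_{p=1}^r w_p\,\cN(S_p,G),
\]
which is exactly the claim.

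Existence and uniqueness of such $w_p$ are immediate from linear algebra: the polynomials $\binom{x}{0},\binom{x}{1},\dots,\binom{x}{r}$ form a basis of the space of polynomials of degree at most $r$, and the coefficient of $\binom{x}{0}=1$ in the expansion of $x^r$ must vanish because $x^r$ is $0$ at $x=0$. One recognizes $w_p=p!\,S(r,p)$, where $S(r,p)$ is the Stirling number of the second kind, which already shows $w_p>0$ for $1\le p\le r$ from its combinatorial meaning. For a self-contained argument I would instead prove the identity and the positivity simultaneously by induction on $r$. The base case $r=1$ is $x=\binom{x}{1}$. For the inductive step, assuming $x^{r-1}=\sum_{p=1}^{r-1}w_p(r-1)\binom{x}{p}$ with all coefficients positive, multiply by $x$ and use the elementary identity $x\binom{x}{p}=(p+1)\binom{x}{p+1}+p\binom{x}{p}$, which follows from $(x-p)\binom{x}{p}=(p+1)\binom{x}{p+1}$. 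Collecting terms yields the recursion $w_q(r)=q\bigl(w_{q-1}(r-1)+w_q(r-1)\bigr)$, with the conventions $w_0\equiv 0$ and $w_q(r-1)=0$ for $q>r-1$; since the right-hand side is a positive combination of the inductively positive numbers $w_p(r-1)$, every $w_q(r)$ is positive.

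There is no genuine obstacle here: once the reduction to a single-variable identity is made, the computation is routine. The only point that needs a little care is the positivity of the $w_p$, since a bare existence argument from linear algebra gives no information about signs; this is why I would either run the short induction above or invoke the Stirling-number interpretation.
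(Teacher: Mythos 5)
Your proof is correct, but it takes a different route from the paper's. The paper argues combinatorially, vertex by vertex: the contribution $d^r$ of a vertex $v$ counts length-$r$ vectors with entries in $N(v)$, each such vector determines a star $S_p$ centered at $v$ whose leaves are the distinct entries, and each centered copy of $S_p$ arises from the same number of vectors, namely the number of surjections from an $r$-set onto a $p$-set. This gives the identity and the positivity of $w_p$ in one stroke (surjections onto a $p\le r$ set exist), and it directly produces the closed form $w_p=\sum_{i=0}^{p-1}(-1)^i\binom{p}{i}(p-i)^r=p!\,S(r,p)$ that the paper records afterwards. You instead reduce to the one-variable identity $x^r=\sum_{p=1}^r w_p\binom{x}{p}$, get existence from the fact that the $\binom{x}{p}$ form a basis, and get positivity either from the Stirling interpretation (which is the paper's surjection count in disguise) or from the induction $w_q(r)=q\bigl(w_{q-1}(r-1)+w_q(r-1)\bigr)$; I checked the recursion and the identity $x\binom{x}{p}=(p+1)\binom{x}{p+1}+p\binom{x}{p}$, and both are right, as is your observation that the $\binom{x}{0}$ coefficient vanishes. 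Your algebraic version is arguably cleaner as a self-contained verification; the paper's version has the advantage of exhibiting the combinatorial meaning of the weights, which is what makes the closed form for $w_p$ transparent. One small convention point that affects both arguments equally: for $p\ge 2$ a copy of $S_p$ has a unique center, so $\cN(S_p,G)=\sum_i\binom{d_i}{p}$ is unambiguous, but for $p=1$ this formula counts each edge twice unless $S_1$-copies are understood as centered; the paper adopts that reading (it uses $w_1=1$ with $\cN(S_1,G)=\sum_i d_i$), and your proof is consistent with it.
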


Let us state new results that are implied by the above two propositions and earlier results. Let $H(s-1,n)$ denote the graph which consists of $s-1$ vertices of degree $n-1$ and $n-s+1$ vertices of degree $s-1$. In other words, $H(s-1,n)$ is the join of $K_{s-1}$ and the empty graph on $n-s+1$ vertices. Let $G_0$ be a graph on $n-s+1$ vertices with girth at least 5 such that each vertex of $G_0$ has degree $t-1$ except for at most one vertex, which has degree $t-2$. It is a simple corollary of the Erd\H os-Sachs theorem \cite{ersa} that such a graph exists if $n$ is sufficiently large, see e.g. Lemma 2.2 in \cite{dahi2}. Let $H'(s-1,t-1,n)$ denote the graph we obtain from $H(s-1,n)$ by adding $G_0$ to the independent set of order $n-s+1$. 

\begin{proposition}\label{kovik}
\textbf{(i)} If $r< k$, then $\ex(n,S_r,K_{k+1})=(1+o(1))\cN(S_r,T(n,k))$. 

%\textbf{ii} If $F$ has chromatic number $k>300(r+1)^9$ and contains an edge whose deletion decreases the chromatic number, then $\ex_r(n,F)=e_r(T(n,k-1))$.

   \textbf{(ii)} If $1<s\le t<r$ and $n$ is large enough, then
    $\ex_r(n,K_{s,t})=e_r(H'(s-1,t-1,n))$.
\end{proposition}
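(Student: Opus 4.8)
This follows from Proposition~\ref{csill} together with the observation of Caro and Yuster (recalled above) that an extremal graph for $\ex_r(n,K_{k+1})$ is a complete $k$-partite graph. Indeed, Proposition~\ref{csill} gives $\ex(n,S_r,K_{k+1})=(1+o(1))\max\{\cN(S_r,G):G\text{ a complete }k\text{-partite graph on }n\text{ vertices}\}$. If the parts have sizes $n_1,\dots,n_k$, then $\cN(S_r,G)=\sum_{j=1}^k n_j\binom{n-n_j}{r}$, so writing $n_j=x_jn$ the task is, up to a $(1+o(1))$ factor, to maximize $\psi$-sum $\sum_j x_j(1-x_j)^r$ over the simplex $\sum_j x_j=1$. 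I would show this maximum is $(1-1/k)^r$, attained at the balanced point $x_j=1/k$ (which corresponds to $T(n,k)$). The function $\psi(x)=x(1-x)^r$ is concave on $[0,2/(r+1)]$, its derivative is injective and positive on $(0,1/(r+1))$ and nonpositive beyond; the hypothesis $r<k$ forces $1/k\le 1/(r+1)<2/(r+1)$, so the only interior critical point is the balanced one, while on any face $x_j=0$ we have fewer parts and, by induction, value at most $(1-1/j)^r<(1-1/k)^r$ for some $j<k$. Since the (compact) maximum is attained either at the balanced point or at such a face, it equals $(1-1/k)^r$. (For $r\ge k$ the balanced partition is no longer optimal, already for $k=2$ and large $r$, which is why $r<k$ is needed.)

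\textbf{Part (ii), $H'$ is $K_{s,t}$-free.} Write $H:=H'(s-1,t-1,n)$ and suppose $K_{s,t}\subseteq H$ with classes $A$ ($|A|=s$) and $B$ ($|B|=t$). Split $A=A_1\cup A_2$ and $B=B_1\cup B_2$ according to membership in the $K_{s-1}$-part or in the independent set $I$ carrying $G_0$. Since $|K_{s-1}|=s-1<s\le t$ and $\Delta(G_0)\le t-1$, a short case check shows all of $A_1,A_2,B_1,B_2$ are nonempty: e.g. $A_2\ne\emptyset$ as $|A|>|K_{s-1}|$, and $B_1\ne\emptyset$ since otherwise any $a\in A_2\subseteq I$ would have $\deg_{G_0}(a)\ge|B|=t$. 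Then all edges between $A_2$ and $B_2$ lie in $G_0$, so $G_0\supseteq K_{|A_2|,|B_2|}$; as $G_0$ has girth $\ge5$ it is $K_{2,2}$-free, hence $\min(|A_2|,|B_2|)\le1$. If $|A_2|=1$ then $A_1$ is all of $K_{s-1}$, so $B_1$ (disjoint from $A_1$) is empty, a contradiction; if $|B_2|=1$ then $|B_1|=t-1$, but $A_1,B_1$ are disjoint subsets of $K_{s-1}$, forcing $|A_1|\le s-t\le0$, again a contradiction. Thus $H$ is $K_{s,t}$-free, and directly $e_r(H)=(s-1)(n-1)^r+(n-s+1)(s+t-2)^r+O(1)$.

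\textbf{Part (ii), the upper bound.} I would combine Proposition~\ref{exac}, which gives $e_r(G)=\sum_{p=1}^r w_p\cN(S_p,G)$ with $w_p>0$ and $w_r=r!$, with the known generalized Tur\'an result that $\ex(n,S_r,K_{s,t})=\cN(S_r,H)$ for $1<s\le t<r$ and $n$ large, and the structural information from its proof. The lower-order terms are controlled uniformly: in a $K_{s,t}$-free graph every $p$-set with $p\ge s$ has codegree at most $t-1$, so $\cN(S_p,G)\le(t-1)\binom np$ for $s\le p\le r-1$, and $\cN(S_p,G)\le n\binom{n-1}{p}=O(n^{p+1})$ for $p<s$; since $s\le t\le r-1$ this yields $\sum_{p<r}w_p\cN(S_p,G)\le C_1n^{r-1}$ for a constant $C_1=C_1(r,s,t)$. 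Now split on cases. If $G$ has $s-1$ vertices of degree $n-1$, then any other vertex has degree at most $(s-1)+(t-1)=s+t-2$ (a vertex with $\ge t$ neighbours off the dominating set would complete a $K_{s,t}$), and by convexity and monotonicity of $x\mapsto x^r$ together with a parity/majorization argument on the degree sequence of $G$ minus the dominating vertices, $e_r(G)\le e_r(H)$. If $G$ has at most $s-2$ vertices of degree $n-1$, then $\cN(S_r,G)\le\cN(S_r,H)-\Omega(n^{r-1})$; moreover the stability content of the cited result shows that such near-optimal $G$ have bounded degree away from the top (at most) $s-1$ vertices, so $\cN(S_p,G)\le\cN(S_p,H)+O(n)$ for every $p$, and hence $e_r(G)=r!\,\cN(S_r,G)+O(n^{r-1})<r!\,\cN(S_r,H)\le e_r(H)$ for $n$ large.

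\textbf{Main obstacle.} The delicate point is the last case: one must ensure that the loss in the dominant term $r!\,\cN(S_r,G)$ strictly outweighs any gain in the terms $\cN(S_p,G)$ with $p<r$, which can individually exceed $\cN(S_p,H)$ — e.g. $\cN(S_1,G)=2e(G)$ can be of order $n^{2-1/s}$ while $\cN(S_1,H)=\Theta(n)$. The resolution is the trade-off already encoded in the extremal result for $\ex(n,S_r,K_{s,t})$: an edge-rich (hence ``spread out'') $K_{s,t}$-free graph has maximum degree $O(n^{1-1/s})$, whence $\cN(S_r,\cdot)=o(n^r)\ll\cN(S_r,H)=\Theta(n^r)$, so it cannot compete for $e_r$; and a graph with large $\cN(S_r)$ is forced into the near-$H$ structure, making all lower counts comparable to those of $H$. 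Transferring this stability information from $\cN(S_r)$ to the full weighted sum $\sum_p w_p\cN(S_p,\cdot)$ is the step that needs care.
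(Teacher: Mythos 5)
Part (i) is correct, though you take a different route from the paper: the paper simply cites Bollob\'as--Nikiforov's exact result $\ex_r(n,K_{k+1})=e_r(T(n,k))$ for $r<k$ and transfers it through Proposition~\ref{csill}, whereas you redo the optimization of $\sum_j x_j(1-x_j)^r$ over complete $k$-partite graphs; your Lagrange analysis works (the branch with all $x_j>1/(r+1)$ is killed by $\sum x_j=1$ and $k\ge r+1$), it just re-proves part of the cited theorem. The $K_{s,t}$-freeness of $H'(s-1,t-1,n)$ is also essentially right, but your second contradiction (the case $|B_2|=1$) concludes $A_1=\emptyset$ and calls that a contradiction, while $A_1\neq\emptyset$ is only asserted, not proved; it does hold, but you need to run the girth/degree argument once more for the case $A\subseteq I$ (the paper's count ``at least $t+1$ of the $s+t$ vertices lie in $I$, hence $K_{a,t+1-a}\subseteq G_0$'' avoids this case split entirely).

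The genuine gap is in the upper bound of (ii), in the case where $G$ has at most $s-2$ dominating vertices. Two problems compound. First, the structural claim that ``such near-optimal $G$ have bounded degree away from the top $s-1$ vertices, so $\cN(S_p,G)\le\cN(S_p,H)+O(n)$'' is not available here: the Gerbner--Patk\'os stability says only that a graph within $c_0n^{r-1}$ of $\ex(n,S_r,K_{s,t})$ \emph{contains} $H(s-1,n)$; for a graph that does not contain $H(s-1,n)$ it yields the deficit $\cN(S_r,G)\le\ex(n,S_r,K_{s,t})-c_0n^{r-1}$ and nothing about degrees (indeed $\cN(S_1,G)$ can be $\Theta(n^{2-1/s})\gg\cN(S_1,H)$, as you yourself observe). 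Second, without that structure your bound $\sum_{p<r}w_p\cN(S_p,G)\le C_1n^{r-1}$, with $C_1$ coming from the codegree bound $\cN(S_{r-1},G)\le(t-1)\binom{n}{r-1}$, is not strong enough: the excess of the $p=r-1$ term over that of $H$ can then be as large as roughly $w_{r-1}(t-s)\binom{n}{r-1}$, and there is no reason this is smaller than $w_r c_0n^{r-1}$, since $c_0$ is whatever constant the stability theorem provides and $t-s$ may be large. So the chain $e_r(G)=r!\,\cN(S_r,G)+O(n^{r-1})<r!\,\cN(S_r,H)$ does not follow. The paper closes exactly this hole by bounding the $p=r-1$ term by its own extremal number: since $s\le t\le r-1$, one has $\cN(S_{r-1},G)\le\ex(n,S_{r-1},K_{s,t})=(1+o(1))(s-1)\binom{n}{r-1}=(1+o(1))\cN(S_{r-1},H(s-1,n))$, so the $p=r-1$ term exceeds that of $H$ by only $o(n^{r-1})$ and the $-\Omega(n^{r-1})$ deficit in the top term wins; the terms with $p\le r-2$ are $o(n^{r-1})$ by the bounds you already have. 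With that substitution your argument becomes the paper's.
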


We highlight two cases where we can give new, much simpler proofs to known theorems using generalized Turán results. Proving a conjecture of Caro and Yuster \cite{cy}, Nikiforov \cite{nik} showed the following.

\begin{thm}[Nikiforov, \cite{nik}]\label{niki}
   If $r\ge 2$, then $\ex_r(n,C_{2k})=(1+o(1))(k-1)n^r$.
\end{thm}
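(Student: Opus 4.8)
The plan is to derive the statement from Proposition~\ref{csill} together with a bound on the generalized Turán number $\ex(n,S_r,C_{2k})$. Since $(k-1)n^r=(1+o(1))\,r!\,(k-1)\binom{n}{r}$ and $C_{2k}$ is not a star, Proposition~\ref{csill} shows that the theorem is equivalent to the generalized Turán estimate
\[
\ex(n,S_r,C_{2k})=(1+o(1))(k-1)\binom{n}{r}\qquad(r\ge 2),
\]
because $\ex_r(n,C_{2k})=(r!+o(1))\ex(n,S_r,C_{2k})$ and $\ex(n,S_r,C_{2k})=\Theta(n^r)$. So the whole problem moves to counting stars in $C_{2k}$-free graphs.

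For the lower bound I would take $G=K_{k-1,n-k+1}$. Every copy of $C_{2k}$ has exactly $k$ vertices on each side of any bipartition, so a bipartite graph one of whose classes has size at most $k-1$ is $C_{2k}$-free; in particular $K_{k-1,n-k+1}$ is. Counting only the stars centred in the class of size $k-1$ already gives $\cN(S_r,G)=\sum_v\binom{d(v)}{r}\ge (k-1)\binom{n-k+1}{r}=(1+o(1))(k-1)\binom{n}{r}$, which is also essentially what $H(k-1,n)$ gives, matching the target.

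The real content is the matching upper bound $\sum_v\binom{d(v)}{r}\le (k-1+o(1))\binom{n}{r}$ for every $C_{2k}$-free graph $G$ on $n$ vertices. The key structural observation is that such graphs contain few ``hubs'': fixing $\gamma>0$, if $v_1,\dots,v_k$ are distinct vertices with $d(v_i)\ge(\tfrac12+\gamma)n$, then (indices cyclically) $|N(v_i)\cap N(v_{i+1})|\ge 2\gamma n$, so for $n$ large one may greedily choose distinct $a_1,\dots,a_k$ with $a_i\in N(v_i)\cap N(v_{i+1})$, producing the copy $v_1a_1v_2a_2\cdots v_ka_kv_1$ of $C_{2k}$. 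Hence at most $k-1$ vertices have degree $\ge(\tfrac12+\gamma)n$, and they contribute at most $(k-1)\binom{n}{r}$. For the vertices of smaller degree one has to argue that their total weighted contribution is only $o(n^r)$; the same hub argument — now applied to $k$-subsets that are complete for the relation ``$|N(u)\cap N(v)|\ge k$'', via Hall's theorem — bounds how much common neighbourhood a large family of medium-degree vertices can share, and this is combined with convexity of $x\mapsto x^r$ (here $r\ge 2$ is essential) and the Bondy–Simonovits bound $\ex(n,C_{2k})=O(n^{1+1/k})$ to show that redistributing degree mass over more than $k-1$ vertices strictly decreases $\sum_v\binom{d(v)}{r}$.

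I expect the main obstacle to be exactly this last bookkeeping. The crude estimate $\sum_v d(v)=2e(G)=O(n^{1+1/k})$ alone is insufficient, since by convexity it only yields $O(n^{r+1/k})$; one genuinely needs the ``few hubs and small shared neighbourhoods'' input to rule out, e.g., a disjoint union of several near-complete bipartite graphs each with a small side, and hence to identify a single $K_{k-1,n-k+1}$-type graph as asymptotically extremal. In the write-up the cleanest option is to invoke the known value of $\ex(n,S_r,C_{2k})$ from the literature on counting subgraphs in graphs with no fixed even cycle, and then conclude via Proposition~\ref{csill}.
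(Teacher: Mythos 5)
Your reduction via Proposition~\ref{csill} and your lower-bound construction $K_{k-1,n-k+1}$ are both correct and match the paper's framing, but the upper bound --- which is the entire content of the theorem --- is not actually proved, and the route you sketch differs from the paper's. Your proposed decomposition is false as stated: it is not true that after removing the at most $k-1$ vertices of degree at least $(\tfrac12+\gamma)n$ the remaining vertices contribute only $o(n^r)$ to $\sum_v\binom{d(v)}{r}$. For instance, a disjoint union of two copies of $K_{k-1,n/2-k+1}$ is $C_{2k}$-free, has no vertex of degree above $(\tfrac12+\gamma)n$, and still has $\Theta(n^r)$ copies of $S_r$. So all of the weight of the proof lands on the ``bookkeeping'' step you explicitly leave open (showing that spreading degree mass over more than $k-1$ medium-degree hubs is suboptimal); that step is essentially Nikiforov's original argument and is genuinely nontrivial, so the gap is real. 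Your fallback of ``invoking the known value of $\ex(n,S_r,C_{2k})$ from the literature'' also does not work: the paper points out that no such result appears to be stated anywhere.

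The paper closes exactly this gap by a different and much shorter device. It uses the Gerbner--Nagy--Vizer bound $\ex(n,K_{2,r},C_{2k})=(1+o(1))\binom{k-1}{2}\binom{n}{r}$: letting $p_i$ be the size of the common neighbourhood of the $i$th $r$-set of vertices, one has $\sum_i\binom{p_i}{2}\le\ex(n,K_{2,r},C_{2k})$, while $\cN(S_r,G)=\sum_i p_i$, and the power mean (Cauchy--Schwarz) inequality then gives $\sum_i p_i\le(1+o(1))(k-1)\binom{n}{r}$. This sidesteps any structural analysis of degree sequences. If you want to keep your structural approach you must actually carry out the joint optimization over all degree profiles consistent with $C_{2k}$-freeness; otherwise, replace the missing step with the $K_{2,r}$-counting argument above.
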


Let us remark here that Nikiforov proved this theorem for every real $r$, not only for integers. Our proof works only for integers. However, Nikiforov showed that the case $r=2$ easily implies the statement for larger values of $r$.  Let $F_n$ denote the friendship graph, which contains a vertex of degree $n-1$ and a largest possible matching on the other vertices.

%In the case $k=2$, the above theorem shows that $S_n$ is asymptotically optimal. We can extend $S_n$ to $F_n$ without creating $C_4$. However, another construction that gives the same asymptotics is the so-called F\"uredi graph \cite{}, which gives the correct asymptotics for $\ex(n,C_4)$ and also for $\ex(n,S_2,C_4)$ \cite{}. It is easy to see that for some values of $n$ 

\begin{thm}[Chang, Chen and Zhang, \cite{ccz}]\label{kin} For any $n$-vertex $C_4$-free graph $G$, if 
$|E(G)|\le |E(F_n)|=\lfloor 3(n-1)/2\rfloor$, then $e_r(G)\le e_r(F_n)$.   
\end{thm}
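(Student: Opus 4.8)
The plan is to pass, via Proposition~\ref{exac}, to a comparison of star counts. By that proposition we may write $e_r(G)=\sum_{p=1}^r w_p\,\cN(S_p,G)$ and likewise $e_r(F_n)=\sum_{p=1}^r w_p\,\cN(S_p,F_n)$ with the same positive coefficients $w_p$, so it suffices to prove that
\[
\cN(S_p,G)\le \cN(S_p,F_n)\qquad\text{for each }p\in\{1,\dots,r\}.
\]
The degree sequence of $F_n$ consists of one vertex of degree $n-1$ and $n-1$ vertices of degree at most $2$, so $\cN(S_1,F_n)=|E(F_n)|$, $\cN(S_2,F_n)=\binom n2$ if $n$ is odd and $\binom n2-1$ if $n$ is even, and $\cN(S_p,F_n)=\binom{n-1}p$ for all $p\ge3$.

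For $p=1$ the required inequality $\cN(S_1,G)=|E(G)|\le|E(F_n)|$ is exactly the hypothesis; this is the only place where the edge bound is used. For $p=2$ I would invoke that in a $C_4$-free graph any two distinct vertices have at most one common neighbour, whence $\cN(S_2,G)=\sum_v\binom{d_v}2=\sum_{\{u,w\}}|N(u)\cap N(w)|\le\binom n2$; moreover, for $n$ even, equality would force every pair of vertices to have exactly one common neighbour, making $G$ a windmill on an odd number of vertices by the Friendship Theorem, which is impossible, so then $\cN(S_2,G)\le\binom n2-1$. In both parities $\cN(S_2,G)\le\cN(S_2,F_n)$.

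The substantive case is $p\ge3$, where the claim $\cN(S_p,G)\le\binom{n-1}p$ holds for every $n$-vertex $C_4$-free graph (and for $r\ge3$ and $n$ large the whole theorem is in fact subsumed by Proposition~\ref{kovik}(ii) with $s=t=2$, since $H'(1,1,n)=F_n$). Let $\Delta$ be the maximum degree, attained at a vertex $v^*$. If $\Delta=n-1$, then $N(v^*)$ spans a matching -- two neighbours of $v^*$ with a further common neighbour would create a $C_4$ -- so every other vertex has degree at most $2$ and $\cN(S_p,G)=\binom{n-1}p$. Assume now $\Delta\le n-2$, so $d_v\le n-2$ for all $v$; using $\binom dp=\binom d2\binom{d-2}{p-2}\big/\binom p2$ and the classical bound $\sum_v\binom{d_v}2\le\binom n2$: if $p\ge4$ then $\binom{d_v-2}{p-2}\le\binom{n-4}{p-2}$ for every $v$, so $\cN(S_p,G)\le\binom{n-4}{p-2}\binom n2\big/\binom p2=\tfrac{n(n-p-1)}{(n-2)(n-3)}\binom{n-1}p<\binom{n-1}p$; and if $p=3$, I split on $\Delta$: when $\Delta\le(n+1)/2$ one has $d_v-2\le(n-3)/2$ for all $v$, hence $\cN(S_3,G)=\tfrac13\sum_v\binom{d_v}2(d_v-2)\le\tfrac{n-3}{6}\binom n2=\tfrac{n}{2(n-2)}\binom{n-1}3<\binom{n-1}3$, while when $(n+1)/2<\Delta\le n-2$ the degree-sum bound $d_v+d_{v^*}\le n+1$ gives $d_v\le n+1-\Delta$ for $v\ne v^*$, so $\cN(S_3,G)\le\binom\Delta3+\tfrac{n-1-\Delta}{3}\bigl(\binom n2-\binom\Delta2\bigr)=:g(\Delta)$, and a direct calculation shows that $g$ is unimodal from below, extends with $g(n-1)=\binom{n-1}3$ (the value for $F_n$), and has value below $\binom{n-1}3$ at both endpoints of the range $(n+1)/2<\Delta\le n-2$, so $g(\Delta)\le\binom{n-1}3$ throughout. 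This settles $p\ge3$ apart from a few small values of $n$, which are immediate since the relevant star counts are bounded.

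Putting the cases together gives $e_r(G)=\sum_{p=1}^r w_p\,\cN(S_p,G)\le\sum_{p=1}^r w_p\,\cN(S_p,F_n)=e_r(F_n)$, as desired. Everything here is routine except the numerical verifications in the $p=3$ analysis -- that $g$ is unimodal from below and that its value at the small endpoint (namely $\Delta$ of order $(n+1)/2$, or $\Delta=n-2$) stays below $\binom{n-1}3$ -- together with checking the finitely many small $n$ directly; I do not expect a conceptual obstacle beyond this, since the conceptual content is entirely captured by the reduction to star counts, the Reiman/Friendship facts, and the codegree bound $d_u+d_v\le n+1$.
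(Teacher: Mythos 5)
Your proposal is correct and takes essentially the same route as the paper: decompose $e_r(G)$ into weighted star counts via Proposition \ref{exac}, use the edge hypothesis only for the $p=1$ term, and dominate $\cN(S_p,G)$ by $\cN(S_p,F_n)$ for every $p\ge 2$. The only difference is that the paper obtains the bounds for $p\ge 2$ by citing Proposition \ref{small} from \cite{ger}, whereas you re-derive them from scratch (codegree count plus the Friendship Theorem for $p=2$, and a degree/codegree estimate for $p\ge 3$ that is essentially the same counting as in Lemma \ref{lemmike}).
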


This strengthens a theorem of Caro and Yuster \cite{cy}, who proved that $\ex_r(n,\{C_4,C_6,\dots\})\le e_r(F_n)$. Indeed, it is well-known and easy to see that a $\{C_4,C_6,\dots\}$-free graph has at most $\lfloor 3(n-1)/2\rfloor$ edges. 
We obtain the following further strengthening.

\begin{thm}\label{stars}
    \textbf{(i)} For any $n$-vertex $C_4$-free graph $G$, we have that $e_r(G)\le e_r(F_n)+|E(G)|-|E(F_n)|$.

    \textbf{(ii)} For $r\ge 3$ and sufficiently large $n$, we have that $\ex_r(n,C_4)=e_r(F_n)$.

    \textbf{(iii)}  For any $k>2$ and sufficiently large $n$, $\ex_2(n,\{C_4,C_{2k}\})=e_2(F_n)$.
\end{thm}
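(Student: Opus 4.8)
The engine is Proposition~\ref{exac}, which writes $e_r$ as a positive combination of the star counts $\cN(S_p,\cdot)$, together with the elementary $C_4$-free inequality $\cN(S_2,G)=\sum_v\binom{d_v}2\le\binom n2$ (equivalently $e_r(G)\le\Delta(G)^{r-2}\bigl(n(n-1)+2|E(G)|\bigr)$), and the following structural fact: if $v_1$ is a vertex of maximum degree $\Delta=\Delta(G)$, $N=N(v_1)$, $R=V(G)\setminus N[v_1]$, then $G[N]$ is a matching and every vertex of $R$ has at most one neighbour in $N$ (otherwise two vertices of $N$ would have two common neighbours). Hence $|E(G[N])|\le\lfloor\Delta/2\rfloor$, $e(N,R)\le|R|=n-1-\Delta$, $d_u\le 2+f_u$ for $u\in N$ with $f_u=|N(u)\cap R|$ and $\sum_uf_u\le n-1-\Delta$, and $d_w\le 1+h_w$ for $w\in R$ with $h_w=\deg_{G[R]}(w)$. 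The whole proof is a case analysis on $\Delta$; throughout I take $n$ large, and for (i) and (ii) I take $r\ge 3$ (this is what (ii) asserts and what the $C_4$-free estimates below deliver; for $r=2$ the polarity graph already violates the bound in (i), which is exactly why (iii) forbids a second cycle).

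For (i) and (ii): if $\Delta=n-1$ then $G$ is $v_1$ joined to a matching of size $m\le\lfloor(n-1)/2\rfloor$, so $e_r(G)=e_r(F_n)+2(2^r-1)\bigl(|E(G)|-|E(F_n)|\bigr)$ with $|E(G)|\le|E(F_n)|$; since $2(2^r-1)\ge 1$ this yields both the inequality of (i) and $e_r(G)\le e_r(F_n)$, with equality only at $F_n$. If $\tfrac n2\le\Delta\le n-2$, convexity lets one push each $f_u$ and each $h_w$ to an extreme configuration, giving $e_r(G)\le n^r\bigl(\alpha^r+c_r(1-\alpha)^r\bigr)+O(n^{r-1})$ with $\alpha=\Delta/n\in[\tfrac12,1)$ and a constant $c_r=1+2^{r-1}$; the one-variable function $\alpha^r+c_r(1-\alpha)^r$ is $<1$ on $[\tfrac12,1)$, and near $\alpha=1$ a direct estimate gives $e_r(F_n)-e_r(G)\ge r(n-1-\Delta)(n-2)^{r-1}-O(n)>0$, so across this whole range $e_r(F_n)-e_r(G)\gg n$. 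If $\Delta<\tfrac n2$ then $e_r(G)\le\Delta^{r-2}\bigl(n(n-1)+n\Delta\bigr)<\tfrac34 n^r<e_r(F_n)$. This proves (ii); and since in every case other than $\Delta=n-1$ the slack $e_r(F_n)-e_r(G)$ exceeds $|E(F_n)|\ge|E(F_n)|-|E(G)|$, (i) follows as well.

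For (iii), I would argue by strong induction on $n$, using the Bondy--Simonovits bound $\ex(m,C_{2k})\le c_k m^{1+1/k}$ and setting $D_1=n^{1-1/k}/(2c_k)$. If $\Delta(G)<D_1$ then $e_2(G)\le\Delta(G)\sum_v d_v=2\Delta(G)|E(G)|<2D_1c_kn^{1+1/k}=n^2<e_2(F_n)$. If $\Delta(G)\ge D_1$, decompose around $v_1$ with $j=|R|=n-1-\Delta$; then $|E(G[R])|\le c_kj^{1+1/k}$, $\sum_{u\in N}d_u^2\le 4\Delta+3j+\Delta j$ (using $\sum_uf_u^2\le(\Delta-1)\sum_uf_u\le(\Delta-1)j$), and $\sum_{w\in R}d_w^2\le j+4|E(G[R])|+e_2(G[R])$, where $e_2(G[R])\le e_2(F_j)$ by the induction hypothesis when $j$ is large and $e_2(G[R])=O(1)$ when $j$ is bounded. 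Substituting $\Delta=n-1-j$ collapses the quadratic-in-$j$ terms and leaves $e_2(G)-e_2(F_n)\le -(\Delta-2)j+4c_kj^{1+1/k}+O(1)\le j\bigl(4c_kn^{1/k}-\Delta+2\bigr)+O(1)$, which is negative once $\Delta\ge D_1$ and $n$ is large; here one uses $D_1\gg n^{1/k}$, i.e.\ $1-1/k>1/k$, which is precisely where $k>2$ is needed (and when $j=O(1)$, i.e.\ $\Delta=n-O(1)$, the term $-(\Delta-2)j$ alone already tends to $-\infty$). Since $F_n$ is itself $\{C_4,C_{2k}\}$-free and attains $e_2(F_n)$, this gives $\ex_2(n,\{C_4,C_{2k}\})=e_2(F_n)$.

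The only genuinely delicate point is that the estimates must be sharp enough to recover the exact extremal value rather than its order of magnitude, so the real work is confined to the narrow band where $\Delta(G)$ is within $o(n)$ of $n-1$ — the friendship-graph regime — where one must use the precise structure of $C_4$-free graphs near a dominating vertex (and, for (iii), the $C_{2k}$-free input controlling the residual graph $G[R]$). I expect the main obstacle to be the bookkeeping in that band: verifying that the slack in every off-extremal configuration really does dominate the correction $|E(G)|-|E(F_n)|$ in (i) and the Bondy--Simonovits term $4c_kj^{1+1/k}$ in (iii), and tracking where the argument degrades at $r=2$ so as to see exactly why (iii) requires $k>2$.
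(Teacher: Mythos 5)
Your route is genuinely different from the paper's. The paper proves \textbf{(i)} in one line from Proposition~\ref{exac} together with $\ex(n,S_p,C_4)=\cN(S_p,F_n)$ for $p\ge 2$ (Proposition~\ref{small}), proves \textbf{(ii)} from the stability Lemma~\ref{lemmike}, and proves \textbf{(iii)} via an auxiliary common-neighbourhood graph $G'$, counting $C_k$'s in $G'$ against $C_{2k}$'s in $G$ and invoking Proposition~\ref{korok}. You instead decompose $V(G)$ around a maximum-degree vertex and do a case analysis on $\Delta$. For \textbf{(iii)} your argument is not only correct but substantially shorter than the paper's: with $j=|R|=n-1-\Delta$, your bookkeeping collapses to $e_2(G)\le e_2(F_n)-j\Delta+O(|E(G[R])|)$, and Bondy--Simonovits finishes (this is where $k>2$ enters, exactly as you say). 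One repair: the induction on $n$ is unnecessary and, as set up, circular (the ``sufficiently large'' threshold for the inductive hypothesis is not available for intermediate $j$). Replace $e_2(G[R])\le e_2(F_j)$ by the direct consequence of $C_4$-freeness of $G[R]$, namely $\sum_w h_w^2\le j(j-1)+2|E(G[R])|$; then no induction is needed anywhere.

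For \textbf{(i)}--\textbf{(ii)} there is a real gap in the range $n/2\le\Delta\le(1-\epsilon)n$. ``Pushing each $h_w$ to an extreme'' subject only to $h_w\le|R|-1$ and $\sum_w h_w\le 2\,\ex(|R|,C_4)=O(|R|^{3/2})$ permits $\Theta(|R|^{1/2})$ vertices of degree $|R|-1$, i.e.\ $\sum_w h_w^r=\Theta(|R|^{r+1/2})$, which for $\alpha$ bounded away from $1$ is $\omega(n^r)$ and destroys your claimed bound $n^r\bigl(\alpha^r+c_r(1-\alpha)^r\bigr)+O(n^{r-1})$. You must chain $\sum_w h_w^r\le(\max_w h_w)^{r-2}\sum_w h_w^2$ with the second-moment bound $\sum_w h_w^2\le|R|(|R|-1)+2|E(G[R])|$ coming from $C_4$-freeness --- in effect the same ``two leaves determine the centre'' count that the paper packages into Proposition~\ref{small} and Lemma~\ref{lemmike}. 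Once you do this, your own inequality $e_r(G)\le\Delta^{r-2}\bigl(n(n-1)+2|E(G)|\bigr)$ already disposes of the entire range $\Delta\le(1-\epsilon)n$ for $r\ge3$, and only the band $1\le j\le\epsilon n$ needs the finer expansion; there the $R$-contribution is $O(j^r)=O(\epsilon^{r-1}n^{r-1}j)$, dominated by the loss $jr\bigl((1-\epsilon)n\bigr)^{r-1}$ at the centre, so \textbf{(ii)} and (for $r\ge3$) \textbf{(i)} follow with slack $\Omega(n^{r-1})\gg|E(F_n)|$.

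Finally, your refusal to prove \textbf{(i)} for $r=2$ is not a defect of your proof but an observation about the statement: you are right that the polarity graph violates \textbf{(i)} as written, since $e_2(ER_q)=n(n-1)+2|E|-O(n)$ while the right-hand side is $n^2+|E|+O(n)$ and $|E|=\Theta(n^{3/2})$. The source is that each edge is a copy of $S_1$ with two admissible centres, so the coefficient of $|E(G)|-|E(F_n)|$ produced by Proposition~\ref{exac} is $2$, not $w_1=1$; the statement needs either that factor $2$ or the restriction $r\ge3$ (where the $\Omega(n^{r-1})$ slack of Lemma~\ref{lemmike} absorbs the difference). You should state this explicitly as a correction rather than silently restricting to $r\ge 3$.
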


An edge of a graph is \textit{color-critical} if by deleting that edge the chromatic number decreases. Simonovits \cite{S} showed that if $F$ has chromatic number $k+1$ and a color-critical edge, then $\ex(n,F)=\ex(n,K_{k+1})$ for sufficiently large $n$.

Gu, Li and Shi \cite{gls} proved that for sufficiently large $n$ we have $\ex_r(n,C_5)=\ex_r(n,K_3)=e_r(T)$ for some $n$-vertex complete bipartite graph $T$. %We will present new proofs of the above two theorems and this result. 
We will greatly extend this result by showing that for any graph $F$ with a color-critical edge, $\ex_r(n,F)=e_r(T)$ for some $n$-vertex complete $(\chi(F)-1)$-partite graph $T$. In fact, we prove even more.

Given a graph $F$ with $\chi(F)=k+1$, its decomposition family $\cD(F)$ is the family of bipartite graphs obtained by taking two classes in a proper $(k+1)$-coloring of $F$. Let $\mathrm{biex}(n,F)=\ex(n,\cD(F))$.

\begin{thm}\label{biexes}
    For any non-bipartite graph $F$ we have that $\ex_r(n,F)=e_r(T)+\Theta(\mathrm{biex}(n,F)n^{r-1})$ for some $n$-vertex complete $(\chi(F)-1)$-partite graph $T$.
\end{thm}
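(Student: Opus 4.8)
The strategy is to transfer the known generalized Turán result for stars to the degree-power setting via Proposition \ref{exac}, and to use the standard stability/Simonovits-type machinery to pin down the complete $(\chi(F)-1)$-partite structure. Write $k+1=\chi(F)$. Since $e_r(G)=\sum_{p=1}^r w_p\cN(S_p,G)$ with all $w_p>0$, and since the top term $w_r\cN(S_r,G)$ dominates (the other terms are $O(n^{r-1})$ once $G$ has $\Theta(n)$ vertices of degree $\Theta(n)$, which the extremal graph does), it suffices to understand $\ex(n,S_r,F)$ up to an additive $\Theta(\mathrm{biex}(n,F)n^{r-1})$ error and then control the lower-order terms. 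The expected shape of the answer is already visible: a complete $k$-partite graph $T$ contributes $e_r(T)=\Theta(n^r)$, and one can add a $\mathcal D(F)$-free (hence $F$-free, after the join) graph of $\mathrm{biex}(n,F)$ edges inside one part, each such edge bumping two degrees from $\approx\frac{k-1}{k}n$ to $\approx\frac{k-1}{k}n+1$, changing $e_r$ by $\Theta(n^{r-1})$ per edge, i.e. a total gain of $\Theta(\mathrm{biex}(n,F)n^{r-1})$. This gives the lower bound construction; the work is the matching upper bound.

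For the upper bound I would proceed in the following steps. First, a stability step: let $G$ be an $n$-vertex $F$-free graph with $e_r(G)$ maximum. Using Proposition \ref{csill} (or directly $e_r(G)\le r!\,\ex(n,S_r,F)+O(n^{r-1})$ together with the known fact that $\ex(n,S_r,F)=(1+o(1))\cN(S_r,T(n,k))$ for $F$ non-bipartite — this is the $r<k$-type estimate generalized, and in any case $\ex(n,S_r,F)\le\ex(n,S_r,K_{k+2})$ only gives the right order, so one really wants the Erdős–Stone–Simonovits-flavoured asymptotics for stars) one concludes that $G$ is $o(n^2)$-close in edit distance to $T(n,k)$, and moreover that all but $o(n)$ vertices have degree $(1+o(1))\frac{k-1}{k}n$. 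Second, a cleaning step: show the extremal $G$ has minimum degree $\ge(\frac{k-1}{k}-\varepsilon)n$ — any vertex of small degree can be deleted and replaced by a clone of a high-degree vertex (in the right part), strictly increasing $e_r$ because replacing a degree-$d$ term with a degree-$\Theta(n)$ term gains $\Theta(n^{r-1})\gg$ any loss; this is where convexity of $x\mapsto x^r$ does the work. Third, a symmetrization/partition step: with min-degree large and $G$ close to $T(n,k)$, a Zykov-type symmetrization (replace the neighbourhood of one vertex by that of a better one among non-adjacent vertices) shows one may assume $G$ is obtained from a complete $k$-partite graph by adding edges inside the parts; since $G$ is $F$-free and the parts are large, the graph induced inside any part must be $\mathcal D(F)$-free (a denser bipartite subgraph inside a part, joined to the other $k-1$ parts, would contain $F$), so each part spans at most $\mathrm{biex}(n,F)$ edges. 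Finally, an optimization step: given the $k$-partite-plus-sparse structure, bound $e_r(G)-e_r(T)$ where $T$ is the complete $k$-partite graph on the same parts — each added edge changes two degrees by $1$, and by the mean value theorem this changes $e_r$ by at most $r(\max_i d_i)^{r-1}\le r((k-1)n/k+1)^{r-1}=O(n^{r-1})$ per edge, and there are $O(\mathrm{biex}(n,F))$ such edges in total, yielding the $O(\mathrm{biex}(n,F)n^{r-1})$ upper bound; combined with the construction this gives $\Theta$.

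The main obstacle I anticipate is the third step — establishing that an extremal (or near-extremal) $F$-free graph can be assumed to be exactly a complete $k$-partite graph with extra edges confined inside the parts, rather than merely close to one. The symmetrization argument for $e_r$ is slightly delicate because $e_r$ is not simply "number of copies of some subgraph," so Zykov symmetrization does not verbatim preserve $F$-freeness while increasing the parameter; one has to argue that after symmetrizing, the vertex classes are genuine independent-set-like blocks and that the resulting graph is still $F$-free, which relies on $F$ having chromatic number exactly $k+1$ (so that the complete $k$-partite skeleton alone is $F$-free) and on the near-extremal structure from the stability step to ensure the symmetrization only rearranges a $o(n)$-fraction of each neighbourhood. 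A secondary technical point is making the $\Theta$ (as opposed to merely an upper bound $O$) in the statement rigorous: the lower-bound construction must actually be shown to be $F$-free, i.e. one needs that taking the join of $K_{k-1}$-blocks with a $\mathcal D(F)$-free graph in the last block avoids $F$, which is exactly the defining property of the decomposition family, plus a check that $\mathrm{biex}(n,F)=\Omega(n)$ or is handled correctly when it is $o(n)$ (e.g. $\mathrm{biex}$ could be a constant, in which case $\Theta(\mathrm{biex}(n,F)n^{r-1})=\Theta(n^{r-1})$ and one still needs the extremal graph to carry at least one extra edge inside a part — true since adding a single edge inside a part of $T(n,k)$ keeps it $F$-free and strictly increases $e_r$).
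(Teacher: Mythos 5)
Your lower bound and your final counting step match the paper, and you correctly identify Proposition \ref{exac} as the bridge between $e_r$ and star counts. But the heart of the argument is the step you yourself flag as the ``main obstacle,'' and your proposal does not resolve it; it is also aimed at the wrong target. You try to symmetrize an $e_r$-extremal graph into an \emph{exact} complete $k$-partite graph with a $\cD(F)$-free graph inside each part. That exact structure is neither achievable by Zykov symmetrization here (as you note, $e_r$ is not a subgraph count and $F$-freeness is not preserved) nor what the paper establishes or needs. The paper instead proves a robust structural theorem (Theorem \ref{expa} and Corollary \ref{cori}): it observes that an $e_r$-extremal graph is only guaranteed to satisfy $\cN(S_r,G)\ge \ex(n,S_r,F)-O(n^{r})$, i.e.\ a deficit of order $n^{|V(S_r)|-1}$, so the existing structural theorems for \emph{star-extremal} graphs from \cite{mq,ger3} do not apply directly; it reproves them under this weaker hypothesis (getting a bounded exceptional set $U$ rather than $U=\emptyset$), and then kills $U$ by a switching argument tailored to $e_r$ (deleting the $o(n)$ edges at a bad vertex and reattaching it to the common neighbourhood of a clique of typical vertices gains $\Omega(n^r)$ in $e_r$). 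Without some substitute for this, your plan has no valid structural input.

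A second concrete gap: even granting a structure theorem, the conclusion is \emph{not} that each part induces a $\cD(F)$-free graph. Corollary \ref{cori} only gives that every member of $\cD(F)$ inside a part meets a bounded set $B$ in at least two vertices; vertices of $B$ may have $\Omega(n)$ neighbours inside their own part, so a part can span far more than $\mathrm{biex}(n,F)$ edges when $\sigma(F)>1$. Your edge-count $O(\mathrm{biex}(n,F))$ per part, and hence your final $O(\mathrm{biex}(n,F)n^{r-1})$ bound, silently assumes $B=\emptyset$. The paper closes this by a case split: if $\sigma(F)=1$ then $B=\emptyset$; if $\sigma(F)>1$ then the star is $\cD(F)$-free, so $\mathrm{biex}(n,F)\ge n-1$ and the $O(n^{r})$ contribution of the $O(1)$ vertices of $B$ is absorbed into $O(\mathrm{biex}(n,F)n^{r-1})$. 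This dichotomy is essential and is missing from your argument. (A minor calibration issue: the terms $w_p\cN(S_p,G)$ for $p<r$ are $\Theta(n^{p+1})$, not $O(n^{r-1})$, so they cannot be discarded against an error term that may be as small as $\Theta(n^{r-1})$; one must compare $G$ and $T$ star-type by star-type, as the paper does.)
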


Note that if $F$ has a color-critical edge, then $\mathrm{biex}(n,F)=0$, thus indeed $\ex_r(n,F)=e_r(T)$ in this case.
We will prove a structural theorem (Corollary \ref{cori}) in Section \ref{four} that we will use to prove the above theorem. It also implies 
the following exact results for graphs without a color-critical edge. 

We say that a graph is \textit{almost $\ell$-regular} if either each vertex has degree $\ell$, or one vertex has degree $\ell-1$ and all the other vertices have degree $\ell$.
Let $\cT_0(n,k,a-1)$ be the family of $n$-vertex graphs that can be obtained from a complete $k$-partite graph by adding an almost $(a-1)$-regular graph into each part. Let $B_{k,1}$ denote two copies of $K_k$ sharing exactly one vertex.

\begin{thm}\label{labe}
    \textbf{(i)} Let $F$ consist of $s>1$ components with chromatic number $k+1$, each with a color-critical edge, and any number of components with chromatic number at most $k$. Let $n$ be sufficiently large. Then $\ex_r(n,F)=e_r(T)$ for a complete $(s+k-1)$-partite graph $T$ with $s-1$ parts of order 1.

    \textbf{(ii)} Let $F$ be the complete $(k+1)$-partite graph $K_{1,a,\dots,a}$ and $n$ be sufficiently large. Then $\ex_r(n,F)=e_r(T)$ for some $T\in \cT_0(n,k,a-1)$.

    \textbf{(iii)} We have $\ex_r(n,B_{k+1,1})=e_r(T')$, for some $T'$ that is obtained from a complete $k$-partite graph by adding an edge into one of the parts.
\end{thm}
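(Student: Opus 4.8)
The plan is to deduce all three parts from the structural result of Section~\ref{four} (Corollary~\ref{cori}), together with Proposition~\ref{exac}. By Proposition~\ref{exac} we may write $e_r(G)=\sum_{p=1}^r w_p\,\cN(S_p,G)$ with all $w_p>0$, so an $e_r$-extremal $F$-free graph is a common near-optimum of the star-counting problems; when $V(G)$ splits into a complete multipartite backbone together with a bounded number of extra edges, $e_r$ depends on the degree sequence only through the convex functions $d\mapsto\binom dp$, which is what forces the near-extremal structure to sharpen. In all three cases $F$ is non-bipartite with $\chi(F)=k+1$, so Corollary~\ref{cori} applies and tells us that, for large $n$, some $e_r$-extremal $F$-free graph is obtained from a complete $k$-partite graph by adding inside the parts a graph of bounded complexity that keeps the whole graph $F$-free. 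For each part the task is then (a) to compute the decomposition family $\cD(F)$ and identify the admissible intra-part configuration of largest $e_r$-contribution, and (b) to check that the resulting graph is genuinely $F$-free, which gives the matching lower bound.

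\textbf{(i)} Here $\chi(F)=k+1$ but $F$ has no color-critical edge, since deleting one inside a big component leaves the remaining $s-1$ big components intact. Because each big component $F_j$ has a color-critical edge $u_jv_j$, a proper $k$-coloring of $F_j-u_jv_j$ yields, after moving $v_j$ to a new color, a proper $(k+1)$-coloring of $F_j$ in which $v_j$ is the only vertex of the last color and $u_j$ its only neighbor of some prescribed earlier color; using this coloring in every big component and the first $k$ colors on the low-chromatic components shows that $\cD(F)$ contains a matching of size $s$ together with isolated vertices, while every member of $\cD(F)$ contains a matching of size $s$ (one edge per big component). Hence $\mathrm{biex}(n,F)=\ex(n,M_s)$, which for large $n$ equals $(s-1)n-\binom s2$ and is attained by the split graph with $s-1$ dominating vertices. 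Placing those $s-1$ dominating vertices inside a complete $k$-partite backbone makes them universal, so Corollary~\ref{cori} gives that the extremal graph is the complete $(s+k-1)$-partite graph $T$ with $s-1$ singleton classes and the $k$ remaining class sizes optimized for $e_r$. For the lower bound one checks that $T$ is $F$-free: a big component $F_j$, being $(k+1)$-chromatic, does not embed into the $k$-colorable remainder, so any embedding of $F$ would need $s$ distinct universal vertices for the $s$ vertex-disjoint big components, but only $s-1$ exist; the low-chromatic components embed freely. (Conversely, $s$ universal vertices already force $F$: the color-critical edge makes $F_j-v$ $k$-colorable for a suitable $v$, so $F_j$ embeds into one universal vertex over the complete $k$-partite part, and $s$ such copies fit disjointly; this confirms that $s-1$ is the correct count.)

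\textbf{(ii)} Taking two classes of $K_{1,a,\dots,a}$ produces $K_{1,a}=S_a$ or $K_{a,a}$, so $\cD(F)=\{S_a,K_{a,a}\}$, and for large $n$ only $S_a$ is binding because a graph of maximum degree below $a$ cannot contain $K_{a,a}$. Thus $\mathrm{biex}(n,F)=\ex(n,S_a)=\lfloor n(a-1)/2\rfloor$, the admissible intra-part configuration of largest $e_r$-contribution is an almost $(a-1)$-regular graph in each part, and by Corollary~\ref{cori} the extremal graph lies in $\cT_0(n,k,a-1)$. For the lower bound, such a graph $G$ is $F$-free: if $F$ embedded in $G$, each of the $k$ independent $a$-sets of $F$ must lie inside a single part of $G$ (two vertices in distinct parts are adjacent, but the sets are independent); that part cannot be the one containing the apex vertex, which has fewer than $a$ neighbors inside its own part; and no two of the $k$ sets can lie in the same part, since a vertex of one would then have at least $a$ neighbors inside its part; but this needs $k$ distinct parts different from the apex's part, and only $k-1$ are available.

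\textbf{(iii)} In every proper $(k+1)$-coloring of $B_{k+1,1}$ the apex gets the unique color missing from both copies of $K_{k+1}$, and each other class consists of one vertex from each copy; so two classes induce either $P_3$ (the apex class with another one) or $2K_2$ (two non-apex classes), whence $\cD(B_{k+1,1})=\{P_3,2K_2\}$ and $\mathrm{biex}(n,B_{k+1,1})=\ex(n,\{P_3,2K_2\})=1$. By Corollary~\ref{cori} the extremal graph is then a complete $k$-partite graph plus a single intra-part edge $u_1u_2$, with class sizes optimized, which is exactly the claimed $T'$; it is $B_{k+1,1}$-free because every $K_{k+1}$ in such a graph must use both $u_1$ and $u_2$, whereas the two $K_{k+1}$'s of a copy of $B_{k+1,1}$ meet only in the apex. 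The main obstacle, common to all three parts, is step (a): upgrading the qualitative ``bounded-complexity addition'' of Corollary~\ref{cori} to the assertion that the specific configuration above is \emph{exactly} $e_r$-optimal, i.e.\ ruling out the small local moves (a handful of extra or shifted intra-part edges, or a vertex moved between classes) that a crude estimate would permit, and in part (i) controlling how the big and the low-chromatic components coexist in a candidate extremal graph. The $F$-freeness verifications under (b), sketched above, are then routine.
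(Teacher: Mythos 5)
You correctly identify the two ingredients (Corollary \ref{cori} and Proposition \ref{exac}), and your computations of the decomposition families and of $\mathrm{biex}$ are essentially right, but the step you yourself flag as ``the main obstacle'' --- upgrading the structural information of Corollary \ref{cori} to exact optimality of the specific intra-part configuration --- is precisely where the proof lives, and your proposal does not contain it. The appeal to convexity of $d\mapsto\binom{d}{p}$ does not substitute for it: a priori, for a fixed $k$-partition and set $B$, different values of $p$ could be maximized by different intra-part edge sets, and the weighted sum $\sum_{p=1}^r w_p\cN(S_p,G)$ could then be maximized by yet another configuration that is optimal for no single $p$. The paper closes this gap by reusing the exactness proofs of \cite{ger3}: for these particular graphs $F$, those proofs show that once the $k$-partition is given, one and the same set of intra-part edges maximizes the number of copies of $H$ for \emph{every} weakly $F$-Tur\'an-stable $H$ --- the optimal configuration does not depend on $H$. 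In particular it simultaneously maximizes $\cN(S_p,\cdot)$ for all $p\le r$, and since every $w_p>0$, it maximizes $e_r$. The paper explicitly notes that this $H$-independence is a special feature of these constructions and ``does not necessarily hold for every $F$''; an argument that bypasses this point proves too much. (For \textbf{(iii)} the paper also offers the alternative route through Theorem \ref{newmain} combined with \cite{hlz}, which is closer in spirit to what you attempt, but that theorem requires $\mathrm{biex}(n,F)=O(1)$ and a uniqueness hypothesis on the $\ex(n,F)$-extremal graphs, neither of which is available in \textbf{(i)} or \textbf{(ii)}.)

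Two further concrete problems. In \textbf{(ii)} your $F$-freeness check is wrong: in a subgraph embedding the independent $a$-classes of $K_{1,a,\dots,a}$ need not map into single parts of $T$, since non-edges of $F$ may map to edges of $T$. Indeed, not every member of $\cT_0(n,k,a-1)$ is $F$-free: for $F=K_{1,3,3}$ and $k=2$, if the $2$-regular graph added to $V_1$ contains a triangle $vv_1v_2$ and the one added to $V_2$ contains a four-cycle $u_1w_1u_2w_2$, then the classes $\{v\}$, $\{v_1,u_1,u_2\}$, $\{v_2,w_1,w_2\}$ span a copy of $K_{1,3,3}$. The theorem only claims extremality of \emph{some} member of the family, and the lower bound requires choosing the added almost-regular graphs with large girth; this verification is not routine. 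Similarly, in \textbf{(i)} the value $\mathrm{biex}(n,F)=\Theta(n)$ does not by itself dictate that the optimal intra-part structure consists of $s-1$ universal vertices (the extremal $M_s$-free graph placed inside one part is not automatically the $e_r$-optimal admissible configuration relative to the whole graph); that conclusion again comes from the analysis of the set $B$ in \cite{ger3}.
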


Note that \textbf{(i)} generalizes a theorem of Moon \cite{moon} on $\ex(n,F)$.

%\textbf{(0)} was shown for cliques by Caro and Yuster \cite{cy}. The analogous result for $\ex(n,S_r,F)$ was shown for cliques in \cite{} and for

%biz majd Sec2.5-be. Sok új hiv nekem, amellé másoktól: párhuzamosság pl vs út, cy és gystz. Beírni introba hogy az optimalizálás nem érdekel, talán hivatkozni is, analytical,... Ja és defi extremal graph

There are multiple results, analogous to the above theorem that extend results on $\ex(n,F)$ to $\ex(n,H,F)$ for a large class of 
%weakly $F$-Turán-stable 
graphs $H$. However, there are no results that automatically extend results from $\ex(n,F)$ to $\ex(n,H,F)$. Here we provide one for stars $H$ and also for $\ex_r(n,F)$.

\begin{thm}\label{newmain}
    Let $\mathrm{biex}(n,F)=O(1)$ and assume that for sufficiently large $n$ the extremal graphs for $\ex(n,F)$ each have the following form: we take the Tur\'an graph $T(n,\chi(F)-1)$ and add a $\cD(F)$-free graph with $\mathrm{biex}(n,F)$ edges to one of the parts. Then for sufficiently large $n$ the extremal graphs for $\ex(n,S_r,F)$ and $\ex_r(n,F)$ each have the following form: we take a complete $(\chi(F)-1)$-partite graph and add a $\cD(F)$-free graph with $\mathrm{biex}(n,F)$ edges to one of the parts.
\end{thm}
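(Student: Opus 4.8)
The plan is to handle $\ex(n,S_r,F)$ and $\ex_r(n,F)$ at the same time. Write $k:=\chi(F)-1$ and $b:=\mathrm{biex}(n,F)$, which by hypothesis is bounded and hence constant for all large $n$, and let $\Phi$ denote either $\cN(S_r,\cdot)$ or, via Proposition~\ref{exac}, $e_r(\cdot)=\sum_{p=1}^{r}w_p\,\cN(S_p,\cdot)$ with all $w_p>0$; in either case $\Phi(G)=\sum_{v\in V(G)}\psi(d_G(v))$ for a function $\psi\colon\mathbb{Z}_{\ge0}\to\mathbb{Z}_{\ge0}$ that is strictly increasing and, for $r\ge2$, strictly convex (for $r=1$ the statement is essentially the hypothesis, the edge count being strictly maximized by the Tur\'an graph, so assume $r\ge2$). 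Take a $\Phi$-extremal $F$-free graph $G$ on $n$ vertices and apply the structural Corollary~\ref{cori}: this yields a partition $V(G)=V_1\cup\dots\cup V_k$ with each $|V_i|=\Theta(n)$, all pairs between distinct parts adjacent, and the within-part edges forming a $\cD(F)$-free graph $H=\bigcup_iG[V_i]$ with $|E(H)|=O(1)$ (using $b=O(1)$); thus $G=K+H$ with $K$ complete $k$-partite. Since $G$ then has minimum degree $\Theta(n)$ and $\psi$ is strictly increasing, adding any edge strictly increases $\Phi$, so $G$ is edge-maximal $F$-free. It remains to show that all edges of $H$ lie in a single part, chosen of minimum size, and that $|E(H)|=b$.

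The key tool is a \emph{consolidation lemma}: for a complete $k$-partite graph $K'$ with parts $U_1,\dots,U_k$ and a $\cD(F)$-free graph $B=B_1\sqcup\dots\sqcup B_m$ with $B_\ell\subseteq\binom{U_\ell}{2}$, the graph $K'+B$ is $F$-free if and only if the graph obtained by moving pairwise disjoint copies of $B_1,\dots,B_m$ into one part is, and — when that part is large — both are $F$-free. The $F$-freeness comes from the hypothesis (for a suitable $n'\ge n$ the $\ex(n',F)$-extremal graph $T(n',k)$ with an extremal $\cD(F)$-free graph in one part is $F$-free, and the graphs in question embed into it part-by-part), while the equivalence is a transfer argument: in any embedding of $F$ into the consolidated graph, the $F$-vertices in the chosen part span a subgraph of $B_1\sqcup\dots\sqcup B_m$, so they split into blocks $W^{(1)},\dots,W^{(m)}$ with no $F$-edge between distinct blocks; returning $W^{(\ell)}$ to $U_\ell$ and every other colour class to its part yields an embedding of $F$ into $K'+B$, since the only within-part edges needed lie inside the blocks and all remaining edges are present because the parts of $K'$ are complete to each other.

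Applying this to $G=K+H$: moving $H$ into the part $V_{i^{\ast}}$ of minimum size produces an $F$-free $G'=K+H'$, and because the degree-increment multisets of $H$ and $H'$ coincide while $m\mapsto\psi(m+t)-\psi(m)$ is non-decreasing (strictly, for $r\ge2$) in $m$, we get $\Phi(G')\ge\Phi(G)$, strictly unless every part meeting $E(H)$ already has minimum size. Combined with edge-maximality and the hypothesis this forces the ``one part'' conclusion: a $\cD(F)$-free graph with $b$ edges split over two parts and added to $T(n,k)$ would be an $\ex(n,F)$-extremal graph of a shape the hypothesis forbids, so no such split can occur in a $\Phi$-extremal graph (possibly this is already contained in Corollary~\ref{cori}). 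Finally, since $|V_{i^{\ast}}|=\Theta(n)\to\infty$ and $b=\ex(n,\cD(F))$, there is a maximum $\cD(F)$-free graph (with $b$ edges, $O(1)$ vertices) inside $V_{i^{\ast}}$, added to $K$ it stays $F$-free by the consolidation lemma, and because $\psi$ is strictly increasing and strictly convex the ``benefit'' $\sum_v[\psi(n-|V_{i^{\ast}}|+d_D(v))-\psi(n-|V_{i^{\ast}}|)]$ of a within-$V_{i^{\ast}}$ $\cD(F)$-free graph $D$ with $|E(D)|\le b$ keeping the whole graph $F$-free is strictly maximized only when $|E(D)|=b$; comparing $G$ with $K$ plus such an optimal $D$ forces $H$ to be a maximum $\cD(F)$-free graph sitting in one part, so $G$ has exactly the claimed form. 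Taking $\Phi=\cN(S_r,\cdot)$ and then $\Phi=e_r(\cdot)$ gives the two conclusions; Proposition~\ref{csill} (or a direct comparison) shows they have the same asymptotically extremal graphs.

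The principal obstacle is the exact pinning of $H$ — forcing it simultaneously into a single minimum-size part and onto the full budget $b=\mathrm{biex}(n,F)$ of edges. This has to blend the hypothesis on the $\ex(n,F)$-extremal graphs, the consolidation lemma, and the strict monotonicity and convexity of $\psi$; the two subtle points are that a $\cD(F)$-free graph can be edge-maximal with fewer than $\mathrm{biex}(n,F)$ edges (so one must compare against the globally optimal bounded configuration rather than locally enlarge $H$), and the genuinely unbalanced case in which several parts share the minimum size, where ruling out a split blob needs the hypothesis together with a careful transfer between different complete multipartite ``skeletons''.
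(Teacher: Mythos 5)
Your argument has a fatal gap at the very first step: you read Corollary \ref{cori} as producing a partition in which \emph{all} pairs between distinct parts are adjacent, so that $G=K+H$ with $K$ complete $k$-partite. The corollary gives much less: every vertex of $V_i\setminus B$ is adjacent to all but $o(n)$ vertices of each $V_j$, $j\neq i$, so up to $o(n^2)$ between-part edges may be missing; and edge-maximality of $G$ does not restore them, since adding a missing between-part pair may create a copy of $F$ (via the within-part edges). This is not a technicality — it is the entire content of the paper's proof. The paper orders the pairs $\{i,j\}$ by how many copies of $S_r$ an edge between $V_i$ and $V_j$ lies in, proves Claim \ref{aob} (for every initial segment $U$ of that order, $G[U]$ has at most $|E(T[U])|+K_0$ edges, via a bounded-size embedding into an $\ex(n',F)$-extremal configuration), and then matches each surplus within-part edge $e_{K_0+\ell}$ with an earlier missing between-part edge $e'_\ell$ to show the trade loses $\Theta(n^{r-1})$ copies of $S_r$. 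Your proposal has no mechanism to exclude an extremal graph that carries, say, $\mathrm{biex}(n,F)+1$ within-part edges paid for by missing between-part edges; your consolidation lemma and the convexity argument both presuppose a complete multipartite skeleton and simply do not see this configuration.

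Two further points. First, the assertion that the within-part edges form a $\cD(F)$-free graph with $O(1)$ edges is not what Corollary \ref{cori} states (vertices of $B$ may have $\Omega(n)$ neighbours in their own part, and members of $\cD(F)$ meeting $B$ are allowed); it becomes true only after observing that $\mathrm{biex}(n,F)=O(1)$ forces $\sigma(F)=1$ and hence $B=\emptyset$ (exactly as in the proof of Theorem \ref{biexes}) — you need to say this. Second, in the equality case (exactly $b$ within-part edges spread over two or more parts, no missing between-part edges), your appeal to ``an $\ex(n,F)$-extremal graph of a shape the hypothesis forbids'' does not apply directly, because the skeleton $K$ need not be the balanced Tur\'an graph and so $K+H$ need not be $\ex(n,F)$-extremal; the paper resolves this by extracting a bounded \emph{balanced} complete $k$-partite subgraph containing all within-part edges and applying the hypothesis to that. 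You flag both of these as ``subtle points'' at the end, but flagging them is not proving them. The consolidation-into-the-smallest-part-by-convexity idea is an attractive replacement for part of the paper's bookkeeping, but only after the missing between-part edges have been dealt with.
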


This theorem determines $\ex_r(n,F)$ (apart from the optimization of the order of the parts) for the following graphs. We take some graphs of chromatic number $k$ with color-critical edges, and then add a new vertex and connect it to each other vertex. This includes $B_{k+1,1}$ (thus proves \textbf{(iii)} of Theorem \ref{labe}) among other graphs and can be proved using a theorem from \cite{hlz}. Another example is obtained by taking $k$ odd cycles sharing exactly one vertex, using \cite{yuan}.

In Section \ref{two} we discuss the connection between degree powers and counting stars and prove our simpler results. We prove Theorem \ref{stars} in Section \ref{three}. We deal with the non-degenerate case, i.e., the case of non-bipartite forbidden graphs in Section \ref{four}.

\section{The connection between degree powers and counting stars}\label{two}

Let us show some cases where results in one area imply results in the other area using Proposition \ref{csill}. Note that generalized Turán results are typically newer, but are more general than the form stated here.

Caro and Yuster \cite{cy} showed that if $1<s\le r$, then $\ex_r(n,K_{s,r})=(1+o(1))(s-1)n^r$, and if $t\le r$, then $\ex_r(n,K_{2,t})=(1+o(1))n^r$. Later, Gerbner and Patk\'os \cite{gpat} showed that if $1<s<r$ or $1<s=t=r$, then $\ex(n,S_r,K_{s,t})=(1+o(1))(s-1)\binom{n}{r}$. The same holds if $1<s\le t<r$, and the extremal graph is also determined in \cite{gpat} for $n$ sufficiently large. These results imply the result of Caro and Yuster, and also that if $1<s\le t<r$, then $\ex_r(n,K_{s,t})=(1+o(1))(s-1)n^r$. Later we will determine $\ex_r(n,K_{s,t})$ exactly for these parameters.

Bollob\'as and Nikiforov \cite{bolnik} showed that if $H$ has chromatic number $k$, then $e_r(n,H)=e_r(n,K_k)+o(n^r)$. This is implied by a theorem of Gerbner and Palmer \cite{gp2}. Bollob\'as and Nikiforov \cite{bolnik} also showed that if $r< k$, then $\ex_r(n,K_{k+1})=e_r(T(n,k))$. This implies \textbf{(i)} of Proposition \ref{kovik}.

Nikiforov \cite{nik} proved Theorem \ref{niki}. Recall that it states that $e_r(n,C_{2k})=(1+o(1))(k-1)n^r$. Surprisingly, we are not aware of any result stated on $\ex(n,S_r,C_{2k})$. However, Gerbner, Nagy and Vizer \cite{gnv} showed that $\ex(n,K_{2,r},C_{2k})=(1+o(1))\binom{k-1}{2}\binom{n}{r}$ (the case $r=2$ was proved in \cite{GGyMV}). 
We show that this implies Nikiforov's result. 

\begin{proof}[New proof of Theorem \ref{niki}]
    Let us consider an arbitrary ordering of the $r$-sets in an $n$-vertex $C_{2k}$-free graph $G$, and let $p_i$ denote the order of the common neighborhood of the $i$th set. Then we have $\sum_{i=1}^{\binom{n}{r}} \binom{p_i}{2}\le \ex(n,K_{2,r},C_{2k})=(1+o(1))\binom{k-1}{2}\binom{n}{r}$, thus $\sum_{i=1}^{\binom{n}{r}} p_i^2\le (1+o(1))(k-1)^2\binom{n}{r}$. Therefore, $\cN(S_r,G)=\sum_{i=1}^{\binom{n}{r}} p_i\le \sqrt{\binom{n}{r}\sum_{i=1}^{\binom{n}{r}} p_i^2}\le (1+o(1))(k-1)\binom{n}{r}$, where the power mean inequality is used. By Proposition \ref{csill}, this implies Theorem \ref{niki}.
\end{proof}

%Let us remark that Nikiforov's result is a full paper on 9 pages, while the proof of the stronger result in \cite{gnv} takes about half a page (and it uses a Theorem from \cite{GGyMV} that has a proof on less than two pages).

Let us continue with Proposition \ref{exac} that we restate here for convenience.

\begin{proposition*}
    There are positive numbers $w_p=w_p(r)$ such that $e_r(G)=\sum_{p=1}^r w_p\cN(S_p,G)$.
\end{proposition*}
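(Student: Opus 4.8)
The plan is to work purely at the level of the per-vertex contributions. Recall $\cN(S_p,G)=\sum_{i=1}^n \binom{d_i}{p}$ and $e_r(G)=\sum_{i=1}^n d_i^r$, so it suffices to find positive rationals $w_1,\dots,w_r$ (depending only on $r$) such that the polynomial identity
\[
x^r=\sum_{p=1}^r w_p\binom{x}{p}
\]
holds for every nonnegative integer $x$ (equivalently, as an identity of polynomials in $x$, since both sides have degree $\le r$ and agreeing at infinitely many points forces equality). Summing this identity over $x=d_1,\dots,d_n$ then gives the proposition. So the whole statement reduces to the single fact that the monomial $x^r$ expands in the basis $\{\binom{x}{p}\}_{p=0}^{r}$ with a zero constant coefficient and \emph{positive} coefficients in degrees $1$ through $r$.

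The expansion itself is classical: the falling-factorial polynomials $\binom{x}{p}$ form a basis of the space of polynomials of degree $\le r$, and the change-of-basis coefficients are the Stirling numbers of the second kind. Concretely, $x^r=\sum_{p=0}^{r} S(r,p)\, p!\,\binom{x}{p}$, where $S(r,p)$ counts partitions of an $r$-element set into $p$ nonempty blocks. Hence I would simply set $w_p=w_p(r):=S(r,p)\,p!$ for $1\le p\le r$. Since $r\ge 1$ there is no $p=0$ term (indeed $S(r,0)=0$ for $r\ge 1$), so the sum really starts at $p=1$, matching the statement. I would either cite this standard identity or give the one-line combinatorial proof: for a fixed set $X$ with $|X|=x$, the number of functions from an $r$-element set to $X$ is $x^r$; classifying such a function by the partition it induces on its domain together with the image, there are $S(r,p)$ choices of partition into $p$ blocks, $\binom{x}{p}$ choices of $p$-element image, and $p!$ ways to match blocks to image points injectively, giving $x^r=\sum_{p\ge 1} S(r,p)\,p!\,\binom{x}{p}$.

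Positivity of $w_p$ is immediate and is really the only thing one must remark on (since the rest is a known expansion): $S(r,p)\ge 1$ for all $1\le p\le r$ because one can always partition an $r$-set into $p$ nonempty blocks when $p\le r$, and $p!\ge 1$, so $w_p=S(r,p)\,p!$ is a positive integer. The proof then concludes by writing
\[
e_r(G)=\sum_{i=1}^n d_i^{\,r}=\sum_{i=1}^n\sum_{p=1}^r w_p\binom{d_i}{p}=\sum_{p=1}^r w_p\sum_{i=1}^n\binom{d_i}{p}=\sum_{p=1}^r w_p\,\cN(S_p,G).
\]
I do not anticipate a real obstacle here; the only thing to be careful about is the edge cases $d_i<p$, but these cause no trouble because $\binom{d_i}{p}=0$ there and the falling-factorial identity $x^r=\sum_p w_p\binom{x}{p}$ holds as a polynomial identity for \emph{all} integers $x\ge 0$, in particular for small $d_i$, so no separate argument for low-degree vertices is needed. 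If one wants to avoid invoking Stirling numbers by name, the coefficients $w_p$ can instead be obtained by induction on $r$ (using $x\cdot\binom{x}{p}=(p+1)\binom{x}{p+1}+p\binom{x}{p}$ to pass from the expansion of $x^{r}$ to that of $x^{r+1}$), which visibly preserves positivity of the coefficients; I would mention this as an alternative but present the Stirling-number version as the main line.
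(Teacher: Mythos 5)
Your proof is correct and is essentially the same as the paper's: the paper's counting of length-$r$ vectors with entries among the neighbors of $v$, classified by their set of distinct entries, is exactly your function-counting proof of the identity $x^r=\sum_{p}S(r,p)\,p!\binom{x}{p}$, and your $w_p=S(r,p)\,p!$ agrees with the paper's surjection count $w_p=\sum_{i=0}^{p-1}(-1)^i\binom{p}{i}(p-i)^r$. No gaps; the positivity remark and the handling of vertices with $d_i<p$ are both fine.
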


\begin{proof}
Consider a vertex $v$ of $G$ with neighbors $u_1,\dots,u_d$. One can see its contribution $d^r$ to $e_r(G)$ as the number of vectors of length $r$ with entries $u_i$. In the case the entries are each different vertices, this corresponds to a star $S_r$ with center $v$. Each such star is counted $r!$ times. Note that this is the same argument that led to Proposition \ref{csill}. However, consider now vectors of other types. Each vector corresponds to a star $S_p$ with $p\le r$, with center $v$ and leaves $u_i$ that appear as entries in the vector. Then each copy of $S_p$ with center $v$ is counted as many times as the number of ways we can form a vector of length $r$ with the leaves as entries (and each leaf appearing at least once). Observe that this number does not depend on the choice of the $p$ leaves or on $d$, only on $r$ and $p$, completing the proof. 
\end{proof}

Note that the exact value of $w_p$ is not going to be important for us, except that $w_1=1$. It is not hard to see that $w_p=\sum_{i=0}^{p-1} (-1)^{i}\binom{p}{i}(p-i)^{r}$. 
Indeed, $(p-i)^r$ counts the vectors with a given set of $p-i$ entries where some of the entries may appear zero times. Observe that $\binom{p}{i}(p-i)^{r}$ counts the vectors for all the $(p-i)$-sets. If we are given a set $P$ with exactly $p-i$ entries, it is also counted as a vector with at most $p-j$ entries for $j< i$, simply for each $(p-j)$-set that contains $P$, i.e., $\binom{i}{j}$ times. Therefore, $P$ is counted $\sum_{j=0}^i (-1)^{j}\binom{i}{j}$ times. This counts the number of even subsets minus the number of odd subsets of an $i$-element set, thus equals zero unless $i=0$.

%Then for each $j< r$, the vectors with exactly $p-j$ entries are counted as $\sum_{i=0}^j (-1)^{i}\binom{j}{i}=0$ times, since among the vectors with at most $p-i$ entries, $\binom{j}{i}$ contains a given set of $p-j$ entries??? nem annyira, kiválasztjuk melyik entryket tüntejük el, de mi is lesz azokkal?? Ja de, egyszerűen csak hozzáveszünk entryket és oda is számoltuk

%??? amiben $p-i$ leaf van, azt beszámoljuk 1-szer, aztán levonjuk minden olyan esetben amikor a $p-1$-ben benne van, tehát amikor az elhagyott egy az nem itteni, azaz $i$ esetben. Aztán hozzáadjuk $\binom{i}{2}$-szer stb, elmegyünk $i$-ig akkor a plusz a páros, a minusz a páratlan részhalmazok száma.

%De lehet hogy érdemes. $w_2=2^r-2$, mert az $r$ hely mindegyikére rakunk egyet, de itt az is van amikor egyikből 0 van. $w_3=3^r-3x2^r+3$ mert amiben csak 2 van, azt levonjuk, ez 3-féleképp lehet, de amiben csak 1 van, azt így 2-szer vontuk le $u_1$-et az $u_1u_2$ és az $u_1u_3$-nál, de csak egyszer kellett volna. $w_4=4^r-4x3^r+6x2^r-4$. Ált: $w_p=p^r-px(p-1)^r+\binom{p}{2}(p-2)^r+... \binom{p}{i}(p-i)^r$

The above proposition shows that $e_r(G)$ is a generalized Tur\'an problem with counting multiple graphs at the same time (with weights). Such problems were studied in \cite{gerbn}, but without results relevant to us. One case when it is easy to deal with counting multiple graphs is when the same graph is extremal for them. This is the case for example if $F$ has a color-critical edge and chromatic number at least $300(r+1)^9+1$ \cite{mnnrw,dahi}.
%which proves \textbf{(ii)} of Proposition \ref{kovik}. 
This already proves a special case of Theorem \ref{biexes}.

We have already mentioned that the case $r$ is not an integer has been studied. In fact, for the degree sequence $(d_i)$ of $G$, $\sum_{i=1}^n f(d_i)$ has been studied for more general functions $f$. Bollob\'as and Nikiforov \cite{bolnik2} specifically suggested counting $\sum_{i=1}^n \binom{d_i}{r}$, which is $\cN(S_r,G)$. Li and Shi \cite{lish} studied this. In our language, they proved that if $r\le 2$, then $\ex(n,S_r,K_{k+1})=\cN(S_r,T(n,k))$. The case $r=1$ is Tur\'an's theorem, the case $r=2$ was known at that point \cite{gypl} for $k=2$, but not for larger $k$. They showed that for any $r$, if $k$ is large enough, then  $\ex(n,S_r,K_{k+1})=\cN(S_r,T(n,k))$. This was proved later for complete multipartite graphs $H$ in place of $S_r$ in \cite{gerpal} and for any graph $H$ in \cite{mnnrw}. They also showed that for any graph $F$ with chromatic number $k+1$, we have $\ex(n,S_r,F)=\ex(n,S_r,K_{k+1})+o(n^{r+1})$. This was later proved for every graph $H$ in \cite{gp2}.

Proposition \ref{exac} is also useful when we have a stability result. 
%Let $H(s-1,n)$ denote the graph which consists of $s-1$ vertices of degree $n-1$ and $n-s+1$ vertices of degree $s-1$. In other words, $H(s-1,n)$ is the join of $K_{s-1}$ and the empty graph on $n-s+1$ vertices. Let $G_0$ be a graph on $n-s+1$ vertices with girth at least 5 such that each vertex of $G_0$ has degree $t-1$ except for at most one vertex, which has degree $t-2$. It is known that such a graph exists if $n$ is sufficiently large. Let $H'(s-1,t-1,n)$ denote the graph we obtain from $H(s-1,n)$ by adding $G_0$ to the independent set of order $n-s+1$. 
Gerbner and Patk\'os \cite{gpat} showed that if $1<s\le t<r$ and $n$ is sufficiently large, then any $n$-vertex $K_{s,t}$-free graph $G$ with at least $\ex(n,S_r,K_{s,t})-\Omega(n^{r-1})$ copies of $S_r$ contains $H(s-1,n)$. Moreover, $\ex(n,S_r,K_{s,t})=\cN(S_r,H'(s-1,t-1,n))$. Using this, we are going to prove \textbf{(ii)} of Proposition \ref{kovik}, which states that
if $1<s\le t<r$ and $n$ is large enough, then
    $\ex_r(n,K_{s,t})=e_r(H'(s-1,t-1,n))$.

\begin{proof}[Proof of \textbf{(ii)} of Proposition \ref{kovik}]
For the lower bound, assume indirectly that $H'(s-1,t-1,n)$ contains a copy of $K_{s,t}$. That copy contains at least $t+1$ vertices of degree less than $n-1$. Those vertices form a copy of $K_{a,b}$ inside $G_0$ for some $a\le b$ with $a+b\ge t+1$. Observe that if $a=1$, we have a $K_{1,t}$ in $G_0$, while if $a>1$, then $G_0$ contains $K_{a,a}$, which contain $C_4$. Both are impossible by the construction of $G_0$.

Let us continue with the upper bound and let $G$ be an extremal graph. Assume first that $G$ does not contain $H(s-1,n)$. Then by the result of Gerbner and Patk\'os \cite{gpat} mentioned before the proof, $G$ contains at most $\ex(n,S_r,K_{s,t})-\Omega(n^{r-1})$ copies of $S_r$. We also have that $G$ contains at most $\ex(n,S_{r-1},K_{s,t})=(1+o(1))\cN(S_{r-1},H(s-1,n))$ copies of $S_{r-1}$, using that $r-1\ge t$ and the results on $\ex(n,S_r,K_{s,t})$ mentioned earlier.

Now we apply Proposition \ref{exac}. We have that 
\begin{align*}
    e_r(G)=\sum_{i=1}^r w_p\cN(S_p,G)\le O(n^{r-2})+w_{r-1}\cN(S_{r-1},G)+w_r\ex(n,S_r,K_{s,t})-\Omega(n^{r-1})\le \\ w_{r-1}\cN(S_{r-1},H(s-1,n))+o(n^{r-1})+w_r\cN(S_r,H(s-1,n))+O(n)-\Omega(n^{r-1})=\\ w_{r-1}\cN(S_{r-1},H(s-1,n))+w_r\cN(S_r,H(s-1,n))-\Omega(n^{r-1})<e_r(H(s-1,n)),
\end{align*}
%$e_r(G)=\sum_{i=1}^r w_p\cN(S_p,G)=O(n^{r-2})+w_{r-1}\cN(S_{r-1},G)+w_r\ex(n,S_r,K_{s,t})-\Omega(n^{r-1})\le w_{r-1}\cN(S_{r-1},H(s-1,n))+o(n^{r-1})+w_r\cN(S_r,H(s-1,n))+O(n)-\Omega(n^{r-1})=w_{r-1}\cN(S_{r-1},H(s-1,n))+w_r\cN(S_r,H(s-1,n))-\Omega(n^{r-1})<e_r(H(s-1,n))$, 
a contradiction.

We obtained that $G$ contains $H(s-1,n)$. Let $A$ denote the set of vertices of degree less than $n-1$ in this $H(s-1,n)$. Observe that each vertex in $A$ can have at most $t-1$ neighbors in $A$ because $G$ is $K_{s,t}$-free. Therefore, each degree inside $A$ is at most $t-1$, and if $(t-1)(n-s+1)$ is odd, then at least one of the degrees is at most $t-2$. This shows that the for every $i$, the $i$th largest degree in $G$ is at most the $i$th largest degree in $H'(s-1,t-1,n)$, completing the proof. 
\end{proof}

\section{Forbidden $C_4$}\label{three}

Proposition \ref{exac} can also be useful when the same graph is extremal only for almost all the stars $S_p$ with $p\le r$. Consider now $F=C_4$. 

\begin{proposition}[Gerbner, \cite{ger}]\label{small} $\ex(n,S_r,C_4)=\cN(S_r,F_n)$ for $r\ge 2$.    
\end{proposition}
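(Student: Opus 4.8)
The plan is to pin down the exact value $\cN(S_r,F_n)$ directly, and then prove the matching upper bound for an arbitrary $n$-vertex $C_4$-free graph $G$ by analysing $\cN(S_r,G)=\sum_v\binom{d_v}{r}$ according to the maximum degree $D=\Delta(G)$. Since $F_n$ is a union of triangles sharing a vertex (together with one pendant edge when $n$ is even), it is $C_4$-free, and summing $\binom{d}{r}$ over its degree sequence gives $\cN(S_r,F_n)=\binom{n-1}{r}$ when $r\ge 3$, and $\cN(S_2,F_n)=\binom{n}{2}$ for odd $n$, $\binom{n}{2}-1$ for even $n$. So the lower bound is immediate and the content is the upper bound; $n\le r+1$ is a trivial base case, so I assume $n$ large.

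Two standard facts about $C_4$-free graphs will drive the argument. First, the cherry bound $\sum_v\binom{d_v}{2}\le\binom{n}{2}$, since each pair of vertices has at most one common neighbour. Second, if $u$ has degree $D$, then $N(u)$ induces a matching and every vertex of $V\setminus(N(u)\cup\{u\})$ has at most one neighbour in $N(u)$, for otherwise a $C_4$ through $u$ appears. The case $r=2$ follows at once: the cherry bound gives $\cN(S_2,G)\le\binom{n}{2}$ with equality only if every pair of vertices has exactly one common neighbour, which by the Friendship Theorem forces $G\cong F_n$ and $n$ odd; for even $n$ equality is impossible, so $\cN(S_2,G)\le\binom{n}{2}-1=\cN(S_2,F_n)$.

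For $r\ge 3$ I would split on $D$. In the ``moderate'' range — $D\le n-2$ when $r\ge 4$, and $D\le n-3$ when $r=3$ — I use the elementary identity $\binom{d}{r}=\binom{d}{2}\binom{d-2}{r-2}/\binom{r}{2}$ to bound $\binom{d_v}{r}\le\binom{d_v}{2}\binom{D-2}{r-2}/\binom{r}{2}$, sum over $v$, and apply the cherry bound, getting $\cN(S_r,G)\le\binom{D-2}{r-2}\binom{n}{2}/\binom{r}{2}$. Combining this with the counting identity $\binom{r}{2}\binom{n-1}{r}=\binom{n-1}{2}\binom{n-3}{r-2}$ reduces the desired inequality $\cN(S_r,G)\le\binom{n-1}{r}$ to $n\binom{D-2}{r-2}\le(n-2)\binom{n-3}{r-2}$; since $\binom{D-2}{r-2}$ is largest at the boundary value of $D$, this becomes a one-line check ($(4-r)n\le 6$ for $r\ge 4$, and $0\le 6$ for $r=3$). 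In the remaining ``large-degree'' cases $D$ is essentially $n-1$: if $D=n-1$, every other vertex has degree at most $2$ (its neighbour $u$ plus at most one matching partner inside $N(u)$), so contributes nothing to $\cN(S_r,G)$ for $r\ge3$ and $\cN(S_r,G)=\binom{n-1}{r}$ exactly; the only extra case, $r=3$ with $D=n-2$, has a single vertex $w$ outside $N(u)\cup\{u\}$, forcing $d_w\le1$ and all but one vertex of $N(u)$ to have degree at most $2$, so $\cN(S_3,G)\le\binom{n-2}{3}+1\le\binom{n-1}{3}$.

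The point that needs genuine care — and the reason the cherry bound alone does not suffice — is that $\binom{D-2}{r-2}\binom{n}{2}/\binom{r}{2}$ is close to $\binom{n}{r}$, which exceeds the target $\binom{n-1}{r}$ by the lower-order term $\binom{n-1}{r-1}$. The identity $\binom{r}{2}\binom{n-1}{r}=\binom{n-1}{2}\binom{n-3}{r-2}$ is precisely what converts the small loss in the cherry inequality (the factor $\tfrac{n-2}{n}$) into enough slack to absorb this term, but only once $D$ is bounded away from $n$ by an absolute constant — which is exactly why the few largest values of $D$ must be handled separately by the neighbourhood structure. I expect this last bookkeeping (matching the two regimes with no gap) to be the only delicate part; everything else is routine.
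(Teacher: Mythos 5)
Your proof is correct, and its engine is the same as the one the paper imports from \cite{ger} (and reuses in the proof of Lemma \ref{lemmike}): combining the cherry bound $\sum_v\binom{d_v}{2}\le\binom{n}{2}$ with the identity $\binom{d}{r}=\binom{d}{2}\binom{d-2}{r-2}/\binom{r}{2}$ is exactly the double count ``pick two leaves, their unique common neighbour, then $r-2$ further neighbours,'' and yields the same bound $\cN(S_r,G)\le\binom{n}{2}\binom{\Delta-2}{r-2}/\binom{r}{2}$. Where you genuinely diverge is the endgame. The paper's route (visible in Lemma \ref{lemmike}) extracts only an asymptotic gain from this bound when $\Delta$ is bounded away from $n$, and handles near-extremal degrees by a local switching argument (rewiring a low-contribution vertex onto the dominating vertex), which requires $n$ large. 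You instead close the gap exactly: the identity $\binom{r}{2}\binom{n-1}{r}=\binom{n-1}{2}\binom{n-3}{r-2}$ reduces the moderate range to the one-line inequality $n\binom{D-2}{r-2}\le(n-2)\binom{n-3}{r-2}$, and the remaining cases $D\in\{n-1,n-2\}$ are dispatched by the structure of $N(u)$ (a matching, with every outside vertex seeing at most one vertex of $N(u)$). Your computations check out, including the boundary verifications $(4-r)n\le 6$ for $r\ge4$, $0\le 6$ for $r=3$ with $D\le n-3$, the bound $\binom{n-2}{3}+1\le\binom{n-1}{3}$, and the $r=2$ case via the Friendship Theorem. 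What your version buys is an exact, self-contained statement with essentially no ``sufficiently large $n$'' hypothesis and no switching step; what it gives up is the quantitative stability (the $-\Omega(n^{r-1})$ and $-cn^{r-1/2}$ deficits) that the paper actually needs from this circle of ideas in Lemma \ref{lemmike}.
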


In the case $r=1$, we know that $\ex(n,C_4)=(1+o(1))n^{3/2}/2$ \cite{fure}, much larger than $|E(F_n)|$. That means that for an $n$-vertex $C_4$-free graph $G$ we may have that $e_r(G)>e_r(F_n)$ for larger $r$, but that surplus must come from the number of edges. This proves \textbf{(i)} of Theorem \ref{stars}. We remark that the proofs of the weaker statements in \cite{cy} and \cite{ccz} are much longer than this proof, even if we add the proof of Proposition \ref{small} from \cite{ger} to this proof.
%This implies the following. \begin{corollary} For any $n$-vertex $C_4$-free graph $G$, we have that $e_r(G)\le e_r(F_n)+|E(G)|-|E(F_n)|$. In particular, if $|E(G)|\le |E(F_n)|=\lfloor 3(n-1)/2\rfloor$, then $e_r(G)\le e_r(F_n)$. Therefore, $\ex_r(n,\cC^*)=e_r(F_n)$, where $\cC^*$ denotes the set of even cycles.\end{corollary} Note that the third statement uses that $\ex(n,\cC^*)\le \lfloor 3(n-1)/2\rfloor$, which is not hard to show.

%We remark that the third statement is in \cite{cy} for $p=2$, with the proof for larger $p$ omitted for being rather technical. The second statement is Theorem \ref{kin}, with the proof being more than 3 pages, with more than 4 additional pages of proofs of necessary lemmas in the appendix. Our proof of the strongest first statement uses only Proposition \ref{exac}, Proposition \ref{small}, and the short discussion after it. Note that the proof of Proposition \ref{small} in \cite{ger} is only a page.

To prove \textbf{(ii)} and \textbf{(iii)} of Theorem \ref{stars}, we need a stability version of Proposition \ref{small}.

\begin{lemma}\label{lemmike}
    %For $r\ge 2$ and sufficiently large $n$, we have that $\ex(n,S_r,C_4)\le \binom{n-1}{r}$. Moreover, i
    If an $n$-vertex $C_4$-free graph $G$ does not contain $S_{n-1}$, then $\cN(S_r,G)\le \binom{n-1}{r}-\Omega(n^{r-1})$. Furthermore, 
    %if $G$ does not contain a vertex of degree $n-O(1)$, then $\cN(S_r,G)\le \binom{n-1}{r}-\omega(n^{r-1})$.
    for any constant $c$ there is a constant $c'$ such that if the largest degree in $G$ is less than $n-c'\sqrt{n}$, then $\cN(S_r,G)\le \binom{n-1}{r}-cn^{r-1/2}$.
\end{lemma}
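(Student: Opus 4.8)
The idea is to reprove Proposition~\ref{small} for $r\ge 3$ in a quantitative form, reading off the two stability bounds along the way. Since in $F_n$ only the central vertex has degree exceeding $2$ and $\binom 2r=0$ for $r\ge 3$, we have $\cN(S_r,F_n)=\binom{n-1}{r}$, so the first assertion is just the $r\ge 3$ case of Proposition~\ref{small}. For the rest I would argue from two elementary facts about an $n$-vertex $C_4$-free $G$ with degree sequence $d_1\ge\cdots\ge d_n$: (i) any two vertices have at most one common neighbour, so counting paths on three vertices by their centre gives $\sum_i\binom{d_i}{2}\le\binom n2$; and (ii) if $d_1=n-1$ then deleting the vertex of full degree destroys every path on three vertices (such a path would close up into a $C_4$ through that vertex), so $d_i\le 2$ for $i\ge 2$. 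Using the identity $\binom r2\binom dr=\binom d2\binom{d-2}{r-2}$ together with (i) one obtains the working formula
\[\cN(S_r,G)=\sum_i\binom{d_i}{r}=\frac1{\binom r2}\sum_i\binom{d_i}{2}\binom{d_i-2}{r-2},\]
and (ii) shows that if $d_1=n-1$ then every term with $i\ge 2$ vanishes for $r\ge 3$, giving $\cN(S_r,G)=\binom{n-1}{r}$. So it suffices to show that $d_1\le n-2$ already forces the claimed losses; fix a small $\varepsilon=\varepsilon(r)>0$ (with $\varepsilon\le 1/3$, specified at the end) and split into two regimes.

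If $d_1\le(1-\varepsilon)n$, bound everything crudely in the working formula: $\binom{d_i-2}{r-2}\le\binom{d_1-2}{r-2}$ and $\sum_i\binom{d_i}{2}\le\binom n2$ give $\cN(S_r,G)\le\binom{d_1-2}{r-2}\binom n2/\binom r2\le(1-\varepsilon)^{r-2}n^r/r!$, which for $n$ large is below $\binom{n-1}{r}$ by $\Omega(n^r)$. As $n^r$ dominates $n^{r-1}$ and $cn^{r-1/2}$, this regime is settled for all three statements at once, with a loss independent of $c'$.

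If $(1-\varepsilon)n<d_1\le n-2$, write $d_1=n-1-m$ with $1\le m<\varepsilon n$ and peel off the top vertex,
\[\cN(S_r,G)=\binom{d_1}{r}+\frac1{\binom r2}\sum_{i\ge 2}\binom{d_i}{2}\binom{d_i-2}{r-2}.\]
The remaining vertices carry almost no weight: $\sum_{i\ge 2}\binom{d_i}{2}\le\binom n2-\binom{d_1}{2}=\tfrac12(m+1)(2n-2-m)\le(m+1)n$, so $d_2\le\sqrt{2(m+1)n}+1$ and, bounding $\binom{d_i-2}{r-2}\le\binom{d_2-2}{r-2}$ for $i\ge 2$, the tail sum is at most $C_r(m+1)^{r/2}n^{r/2}$ for a constant $C_r$. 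Meanwhile lowering the maximum degree is expensive: $\binom{n-1}{r}-\binom{n-1-m}{r}=\sum_{j=0}^{m-1}\binom{n-2-j}{r-1}\ge m\binom{n-1-m}{r-1}\ge c_r m n^{r-1}$ for some $c_r>0$, using $m<\varepsilon n\le n/3$. Combining, and using $m\ge 1$ to absorb $(m+1)^{r/2}\le 2^{r/2}m^{r/2}$, gives $\cN(S_r,G)\le\binom{n-1}{r}-c_r m n^{r-1}+C_r'(mn)^{r/2}$. Since $r\ge 3$ the exponent $r/2-1$ is positive and $(mn)^{r/2}=(m/n)^{r/2-1}mn^{r-1}\le\varepsilon^{r/2-1}mn^{r-1}$ for $m\le\varepsilon n$, so a sufficiently small $\varepsilon=\varepsilon(r)$ makes the error term at most $\tfrac12 c_r m n^{r-1}$ and hence $\cN(S_r,G)\le\binom{n-1}{r}-\tfrac12 c_r m n^{r-1}$ throughout this regime. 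With $m\ge 1$ this is the ``Moreover'' bound; and $d_1<n-c'\sqrt n$ forces $m\ge\tfrac12 c'\sqrt n$ for $n$ large, giving a loss of at least $\tfrac14 c_r c'\,n^{r-1/2}$, so $c'=4c/c_r$ yields the ``Furthermore'' bound (the regime $d_1\le(1-\varepsilon)n$ having been handled with room to spare).

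The step I expect to be the main obstacle is precisely the intermediate range of the degree deficit $m$: there neither the convexity estimate of the first regime nor the ``tail is negligible'' estimate is on its own good enough, and one must genuinely weigh the $\Omega(mn^{r-1})$ gained by shrinking the top degree against the $O((mn)^{r/2})$ extra copies of $S_r$ that this creates among the other vertices. That this balance works at all relies on $r\ge 3$, and it is what forces the two-regime split with the threshold $\varepsilon$ tuned to $r$.
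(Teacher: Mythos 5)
Your proof is correct, and it takes a genuinely different route from the paper in the regime where it matters. The first regime ($d_1\le(1-\varepsilon)n$) coincides with the paper's argument: your working identity $\cN(S_r,G)=\binom{r}{2}^{-1}\sum_i\binom{d_i}{2}\binom{d_i-2}{r-2}$ combined with $\sum_i\binom{d_i}{2}\le\binom{n}{2}$ is exactly the paper's count of stars by two leaves, their unique common neighbour, and $r-2$ further neighbours, and that same count is what the paper uses to get the $-\Theta(c'n^{r-1/2})$ loss when $\Delta\le n-c'\sqrt n$. Where you diverge is the near-extremal regime: the paper handles $\Delta\ge n-c'\sqrt n$ by a local rewiring argument (find a non-neighbour $v$ of the top vertex lying in only $o(n^{r-1})$ stars, delete its edges and attach it to the top vertex, gaining $\Omega(n^{r-1})$ stars, then compare with the extremal value $\binom{n-1}{r}$ from Proposition \ref{small}). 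You instead peel off the top vertex and control the remainder directly via $\sum_{i\ge2}\binom{d_i}{2}\le\binom{n}{2}-\binom{d_1}{2}\le(m+1)n$, which caps $d_2$ at $O(\sqrt{mn})$ and bounds the tail by $O((mn)^{r/2})$, to be weighed against the $\Omega(mn^{r-1})$ lost by lowering the top degree; the balance works precisely because $r\ge3$. Your version buys a single unified estimate $\cN(S_r,G)\le\binom{n-1}{r}-\Omega(mn^{r-1})$ in the degree deficit $m$, from which all three assertions drop out at once without invoking Proposition \ref{small} as a black box; the paper's rewiring is less computational and doubles as the step that pins down the structure of the extremal graph (the non-neighbourhood of the top vertex must be empty). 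Both arguments are complete.
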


\begin{proof} Let $\Delta$ be the largest degree in $G$.
    We follow the proof of Proposition \ref{small} from \cite{ger}. We count the copies of $S_r$ by picking two leaves first ($\binom{n}{2}$ ways), then their common neighbor (at most 1 way), and $r-2$ other neighbors of that vertex (at most $\binom{\Delta-2}{r-2}$ ways). Each copy of $S_r$ is counted $\binom{r}{2}$ ways. Therefore, we have $\cN(S_r,G)\le \binom{n}{2}\binom{\Delta-2}{r-2}/\binom{r}{2}\le \binom{n}{2}\binom{n-c'\sqrt{n}}{r-2}/\binom{r}{2}$, where the last inequality holds in the case $\Delta\le n-c'\sqrt{n}$. The main term of this upper bound is $n^r/r!$. Then we have the terms where the $c'\sqrt{n}$ appears exactly once, and $n$ appears $r-1$ times. This gives $-\Theta(c'n^{r-1/2})$, and each other term is $O(n^{r-1})$.

    If $\Delta\ge n-c'\sqrt{n}$, let $u$ be a  vertex of degree $\Delta$, 
    $U$ be the set of neighbors of $u$ and $U'$ be the set of other vertices. Assume that $|U'|$ is large enough.
    Observe that there are at most $\ex(|U'|,S_r,C_4)=\binom{|U'|-1}{r}$ copies of $S_r$ inside $U'$ (using Proposition \ref{small}),
    %, and at most $\ex(|U'|,C_4)=O(n^{3/2})$ edges inside $U'$. As every vertex except $u$ has at most one neighbor in $U$, there are at most $|U|+|U'|\le c'\sqrt{n}$ edges incident to $U$ but not $u$.   
thus there is a vertex $v$ in $U'$ that is contained in at most $(r+1)\binom{|U'|-1}{r}/|U'|$ copies of $S_r$ inside $U'$. There is at most one neighbor $v'$ of $v$ in $U$. As $v'$ also has at most one neighbor in $U$, the degree of $v'$ is at most $|U'|+2$. Therefore, we have that $vv'$ is in at most $\binom{|U'|-1}{r-1}+\binom{|U'|+1}{r-1}$ copies of $S_r$. Altogether, $v$ is in $O(|U'|^{r-1})=o(n^{r-1})$ copies of $S_r$. Let us delete the edges incident to $v$ and add the edge $uv$ to obtain $G'$. It is easy to see that no $C_4$ is created this way, and the number of copies of $S_r$ increases by $\Omega(n^{r-1})$, thus we have $\cN(S_r,G)\le \cN(S_r,G')-\Omega(n^{r-1})\le\ex(n,S_r,G)-\Omega(n^{r-1})=\binom{n-1}{r}-\Omega(n^{r-1})$. 

If $|U'|$ is not large enough, then each vertex except for $u$ has degree $O(1)$, thus $v$ is in $O(1)$ copies of $S_r$ and again $\cN(S_r,G)\le \cN(S_r,G')-\Omega(n^{r-1})$. This also shows that if $r\ge 3$, then in the extremal graph $U'$ must be empty, hence $\Delta=n-1$ and it is easy to see that each other vertex has degree less than $r$ because of the $C_4$-free property.
    %then each other vertex shares at most one neighbor with $u$, thus has degree $O(\sqrt{n})$... 
%Therefore, there are $O(n)$ copies of $S_r$ that do not contain $u$. Let $v$ be not adjacent to $u$. Removing all the edges incident to $v$ and connecting $v$ to $u$ we obtain another $C_4$-free graph. We deleted $O(n)$ (in fact $O(1)$) copies of $S_r$ and created $\Omega(n^{r-1})$ copies, completing the proof. 
\end{proof}

We also need the following result.

\begin{proposition}\label{korok}
    If $\ell\ge 6$, then $\ex(n,C_\ell,\{C_4,C_{2k}\})=O(n^{\lfloor\ell/3\rfloor})$.
    
\end{proposition}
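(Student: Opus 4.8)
\textbf{Proof proposal for Proposition \ref{korok}.}

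The plan is to count copies of $C_\ell$ in an $n$-vertex graph $G$ that is both $C_4$-free and $C_{2k}$-free by exploiting the fact that a $C_4$-free graph is very sparse ``locally.'' The starting point is the classical observation that in a $C_4$-free graph the number of paths of length $2$ is at most $\binom{n}{2}$ (every pair of vertices has at most one common neighbor), which forces $\sum_i \binom{d_i}{2}\le\binom{n}{2}$ and hence $\sum_i d_i^2=O(n^2)$, i.e., $|E(G)|=O(n^{3/2})$. More importantly, for counting long cycles it is better to think of a copy of $C_\ell$ as being built from $\lfloor\ell/3\rfloor$ ``anchor'' vertices together with short paths joining consecutive anchors: if we place vertices $v_1,\dots,v_{\lfloor\ell/3\rfloor}$ around the cycle so that between consecutive anchors there are at most two intermediate vertices, then once the anchors are chosen, each connecting subpath of length $2$ or $3$ is essentially determined up to a bounded number of choices, because in a $C_4$-free graph two vertices have at most one common neighbor (so a path of length $2$ between two fixed endpoints is unique) and the number of paths of length $3$ between two fixed vertices is $O(1)$ on average — this is where one has to be slightly careful. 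So the first step is to fix such a decomposition of the cycle $C_\ell$ into $\lfloor\ell/3\rfloor$ anchors and $\lfloor\ell/3\rfloor$ short linking paths, choosing the anchors greedily every third vertex.

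The second step is to bound, for each choice of the anchor tuple, the number of ways to complete it to a genuine copy of $C_\ell$. Choosing the anchors costs at most $n^{\lfloor\ell/3\rfloor}$. Given two anchors that are supposed to be joined by a path of length $2$, $C_4$-freeness gives a unique common neighbor, so a factor of $1$. For two anchors joined by a path of length $3$, say $x - a - b - y$ with $a,b$ to be chosen, we are choosing an edge $ab$ with $a\sim x$ and $b\sim y$; the number of such edges is at most the number of edges of $G$, which is $O(n^{3/2})$ — but that naive bound is too weak. Instead, observe that the number of paths of length $3$ with a \emph{fixed} endpoint $x$ is at most $\sum_{a\sim x}(d_a-1)\le\sum_{a\sim x}d_a$, and summing over all $x$ and using $C_4$-freeness again (each length-$2$ path counted once) this is $O(n^2)$ in total, so \emph{on average over $x$} it is $O(n)$; more carefully, for most anchor choices the length-$3$ linking factor is $O(n)$, and since in our decomposition the number of length-$3$ links is at most the number of length-$2$ links plus a constant, and each anchor is an endpoint of at most two links, a Cauchy–Schwarz / counting argument shows the total count is $O(n^{\lfloor\ell/3\rfloor})$. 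The cleanest way to organize this is: count $C_\ell$'s by first choosing the anchors, then choosing each short link, and bound the product of the per-link factors by reducing to a bound on $\sum_v (\text{number of short paths through or from }v)$, which $C_4$-freeness controls.

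The place where the $C_{2k}$-freeness hypothesis enters — and the step I expect to be the main obstacle — is that the crude argument above, using only $C_4$-freeness, would give $O(n^{\lfloor\ell/3\rfloor})$ only if the ``length-$3$ link'' factors really do telescope to one extra power of $n$ per anchor rather than per link; when $\ell\equiv 0\pmod 3$ this is tight and fine, but when $\ell\equiv 1$ or $2\pmod 3$ one of the links has length $2$ for free, and one must make sure the leftover length-$3$ links don't overcount. This is exactly where forbidding $C_{2k}$ should help: it caps the number of edges at $\ex(n,\{C_4,C_{2k}\})=O(n^{1+1/k})$ (by the Bondy–Simonovits even-cycle theorem, or rather its refinement for the pair $\{C_4,C_{2k}\}$), so every ``choose an edge'' step costs only $O(n^{1+1/k})$ rather than $O(n^{3/2})$, and more relevantly the number of paths of any bounded length between two fixed vertices is $O(1)$ and $\sum_v d_v^2 = O(n^2)$ with better control on the tail. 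The argument I would write: decompose $C_\ell$ into $\lfloor\ell/3\rfloor$ anchors; bound the number of anchor tuples by $n^{\lfloor\ell/3\rfloor}$; bound the completion factor by $O(1)$ using that $G$ is $\{C_4,C_{2k}\}$-free so that a path of bounded length between two specified vertices either doesn't exist or is one of $O(1)$ possibilities — this uses that a $C_4$-free \emph{and} $C_{2k}$-free graph has girth considerations plus the sparsity bound $|E|=O(n^{1+1/k})$ to rule out many short paths sharing endpoints. The one genuinely delicate estimate is showing that the linking paths can be chosen essentially independently, i.e., that the bound is a product and not something worse; I would handle that by revealing the anchors and links in a fixed cyclic order and at each step bounding the number of extensions, invoking $C_4$-freeness (unique common neighbor) and the edge bound from $C_{2k}$-freeness. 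I expect roughly: $n^{\lfloor\ell/3\rfloor}$ for anchors, and a bounded number of ways to fill in the $O(1)$ connecting paths, giving $O(n^{\lfloor\ell/3\rfloor})$ overall.
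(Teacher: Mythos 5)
Your high-level plan (anchor vertices spaced three apart, then count the short linking paths between consecutive anchors) is the same skeleton as the paper's proof, but the step you flag as "genuinely delicate" is exactly where your argument breaks, and the fix you propose is false. In a $\{C_4,C_{2k}\}$-free graph the number of $3$-edge paths between two \emph{fixed} vertices $u,v$ is \textbf{not} $O(1)$: it can be $\Theta(n)$. Take $u,v$ joined by linearly many internally disjoint paths of length $3$; this theta-graph is bipartite, $C_4$-free, and its only cycles are $C_6$'s and longer cycles of length divisible by... in any case it avoids $C_{2k}$ for all $k\neq 3$, yet $f(u,v)=\Theta(n)$. Indeed, the paper's own sharpness construction for this proposition is built precisely from such bundles of $3$-edge paths between consecutive anchors. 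So your final accounting ("$n^{\lfloor\ell/3\rfloor}$ for anchors times a bounded number of ways to fill in the connecting paths") does not hold; a naive product bound gives an extra factor of $n$ per $3$-link. Your middle paragraph does brush against the correct idea (the count is only controlled \emph{on average}, via Cauchy--Schwarz), but you never formulate or prove the lemmas that make this work, and you then abandon that thread in favour of the false pointwise bound.

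What the paper actually uses, quoting results from the even-cycle paper it builds on, are the aggregate estimates $\sum_{v}f(u,v)=O(n)$ for each fixed $u$ and $\sum_{u,v}f(u,v)^2=O(n^2)$, valid for $\{C_3,C_4,C_{2k}\}$-free graphs (together with a reduction from the $\{C_4,C_{2k}\}$-free case to the triangle-free case, which you also omit). It is these two bounds — and this is where $C_{2k}$-freeness genuinely enters, not through the edge count $O(n^{1+1/k})$ — that let one bound $\sum_{v_1,\dots,v_m}\prod_i f(v_i,v_{i+1})$ by replacing one product $f(v_1,v_2)f(v_2,v_3)$ with $\tfrac12\bigl(f(v_1,v_2)^2+f(v_2,v_3)^2\bigr)$ and then telescoping the remaining factors one vertex at a time. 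The bookkeeping is joint, not per-link: one pair (anchor, link) costs $O(n^2)$ via the $f^2$ bound and each further pair costs $O(n)$ via the first-moment bound. A secondary issue: you take $\lfloor\ell/3\rfloor$ anchors with links of at most two internal vertices, which cannot cover all $\ell$ vertices when $3\nmid\ell$; the paper takes $\lceil\ell/3\rceil$ anchors so that all links have length $3$ except one of length $1$ or $2$ (the latter handled by the unique-common-neighbour property of $C_4$-free graphs, which is the one part of your completion argument that is correct). As written, your proof has a genuine gap at its central estimate.
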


In the case $\ell\neq 2k\neq 6$, the above bound is sharp, as shown by taking vertices $u_1,\dots,u_{\lfloor\ell/3\rfloor}$, adding linearly many 3-edge paths between $u_i$ and $u_{i+1}$ for each $i$, and adding an edge, a 2-edge path or linearly many 3-edge paths between $u_1$ and $u_{\lfloor\ell/3\rfloor}$.

Note that counting cycles when forbidding a set of cycles was studied in \cite{GGyMV}, and the above statement was proved in the case $\ell$ is divisible by 3. It is not hard to extend the proof in \cite{GGyMV} to our case. First, we gather the ingredients from \cite{GGyMV} in the following proposition. Let $f(u,v)$ denote the number of paths of 3 edges between $u$ and $v$.

\begin{proposition}\label{korrok}
    \textbf{(i)} $\ex(n,C_\ell,\{C_4,C_{2k}\})=\Theta(\ex(n,C_\ell,\{C_3,C_4,C_{2k}\}))$.

    \textbf{(ii)} Let $G$ be an $n$-vertex $\{C_3,C_4,C_{2k}\}$-free graph. For any $u\in V(G)$ we have $\sum_{v\in V(G)} f(u,v)=O(n)$. 

    \textbf{(iii)} Let $G$ be an $n$-vertex $\{C_3,C_4,C_{2k}\}$-free graph. We have $\sum_{u,v\in V(G)} f^2(u,v)=O(n^2)$.

    \textbf{(iv)} If $\ell\ge 6$ and $\ell$ is divisible by 3, then $\ex(n,C_\ell,\{C_4,C_{2k}\})=O(n^{\ell/3})$.
\end{proposition}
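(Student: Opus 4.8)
The plan is to treat the three parts separately: \textbf{(i)} has a short self-contained proof, while \textbf{(ii)} and \textbf{(iii)} I would derive by the method of \cite{GGyMV}. (Throughout I take $k\ge 3$; for $k=2$, where the hypothesis is just girth $\ge 5$, part \textbf{(ii)} already fails — a dense girth-$5$ graph may have a vertex from which there are $\Theta(n^{3/2})$ paths of length $3$ — so the $C_{2k}$-freeness must be used in an essential way.) For \textbf{(i)}, the inequality $\ex(n,C_\ell,\{C_3,C_4,C_{2k}\})\le \ex(n,C_\ell,\{C_4,C_{2k}\})$ is immediate. For the converse, take an extremal $\{C_4,C_{2k}\}$-free graph $G$ for $C_\ell$; since $G$ is $C_4$-free, its triangles are pairwise edge-disjoint, so we may delete, independently and uniformly for each triangle, one of its three edges, obtaining a triangle-free (still $\{C_4,C_{2k}\}$-free) subgraph $G'$. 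A fixed copy of $C_\ell$ meets at most $\ell$ triangles and shares at most two edges with any one of them (as $\ell>3$), hence avoids the edge deleted from each such triangle with probability at least $1/3$; by edge-disjointness these events are independent, so the copy survives in $G'$ with probability at least $3^{-\ell}$. Taking expectations, $\ex(n,C_\ell,\{C_4,C_{2k}\})=\cN(C_\ell,G)\le 3^{\ell}\,\mathbb{E}[\cN(C_\ell,G')]\le 3^{\ell}\ex(n,C_\ell,\{C_3,C_4,C_{2k}\})$, which with the trivial direction gives the stated $\Theta$.

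For \textbf{(ii)}, fix $u$ and let $N_i$ be the set of vertices at distance exactly $i$ from $u$. As $G$ is $C_4$-free, each $b\in N_2$ has a unique neighbour in $N_1$ and none in $\{u\}\cup N_{\ge 4}$, so $\deg(b)-1=|N(b)\cap N_2|+|N(b)\cap N_3|$; and every $3$-edge path from $u$ is of the form $u\,a\,b\,v$ with $b\in N_2$ and $a$ its neighbour in $N_1$. Hence $\sum_v f(u,v)=\sum_{b\in N_2}(\deg(b)-1)=2e(G[N_2])+e(N_2,N_3)$, and it remains to bound each term by $O(n)$. This is where $C_{2k}$-freeness enters: a cycle inside $G[N_2]$, or a suitable path between two $N_2$-vertices through $N_3$, closes up through $u$ and one or two vertices of $N_1$ into a forbidden short cycle or a $C_{2k}$, forcing $G[N_2]$ and the bipartite graph between $N_2$ and $N_3$ to be sparse. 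For $k=3$ this is quick — a would-be $C_4$ or $C_6$ through $u$ shows that every vertex of $N_2\cup N_3$ has at most one neighbour in $N_2$, so $G[N_2]$ is a matching and $e(N_2,N_3)\le |N_3|\le n$ — while the case $k\ge 4$ is exactly where one imports the structural estimate of \cite{GGyMV}.

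For \textbf{(iii)}, note first that two distinct $3$-edge paths with the same endpoints $u,v$ are internally vertex-disjoint: sharing the first interior vertex, or the middle one, creates two common neighbours of $u$ or of $v$, hence a $C_4$, and the remaining overlap pattern creates a triangle through $u$ or $v$. Thus each unordered pair of distinct $3$-paths with common endpoints spans a copy of $C_6$, and every $C_6$ arises this way from each of its three antipodal pairs, so $\sum_{u,v}f^2(u,v)=\sum_{u,v}f(u,v)+O(\cN(C_6,G))$. The first term is $O(n^2)$ by summing \textbf{(ii)} over $u$, and $\cN(C_6,G)=O(n^2)$ for $\{C_3,C_4,C_{2k}\}$-free $G$ is a lemma established directly in \cite{GGyMV} (it is trivial for $k=3$, where $C_6$ is itself forbidden), which gives \textbf{(iii)}.

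The step I expect to be the main obstacle is the sparsity bound $e(G[N_2(u)])+e(N_2(u),N_3(u))=O(n)$ in \textbf{(ii)}, together with its companion $\cN(C_6,G)=O(n^2)$ in \textbf{(iii)}: everything else is formal, and this is precisely where forbidding $C_{2k}$ — not merely $\{C_3,C_4\}$ — is indispensable, since over girth-$5$ graphs both bounds are false. Turning the observation ``a long path near the second neighbourhood would close up through $u$ to a $C_{2k}$'' into a clean $O(n)$ edge count, uniformly in $k\ge 3$, is the technical heart of the proposition, and it is for this that the analysis of \cite{GGyMV} is invoked.
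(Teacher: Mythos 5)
The paper does not actually prove Proposition \ref{korrok}: it is introduced with the sentence ``we gather the ingredients from \cite{GGyMV}'' and every item is simply imported from that reference. So your proposal should be judged against that baseline, and it in fact supplies strictly more argument than the paper does. Your proof of \textbf{(i)} is a correct, self-contained probabilistic deletion argument: in a $C_4$-free graph triangles are pairwise edge-disjoint (two triangles on a common edge force a $C_4$), so deleting a uniformly random edge from each triangle independently yields a triangle-free subgraph in which any fixed $C_\ell$ ($\ell>3$) survives with probability at least $3^{-\ell}$, giving the two-sided comparison. Your reduction in \textbf{(ii)} to $2e(G[N_2])+e(N_2,N_3)=O(n)$ is also correct (the uniqueness of the $N_1$-neighbour of each $N_2$-vertex and the exclusion of $v\in N_1$ both follow from $\{C_3,C_4\}$-freeness), as is the reduction in \textbf{(iii)} to $\cN(C_6,G)=O(n^2)$ via the internal disjointness of distinct $3$-paths with common endpoints, which gives $\sum_{u,v}\binom{f(u,v)}{2}=3\cN(C_6,G)$ exactly. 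The remaining kernels --- the $O(n)$ sparsity bound around $N_2$ for general $k\ge 3$ and the quadratic bound on $\cN(C_6,G)$ --- you defer to \cite{GGyMV}, which is precisely what the paper does for the entire proposition, so this is not a gap relative to the paper; your explicit $k=3$ case and your observation that \textbf{(ii)} fails for $k=2$ (incidence graphs of projective planes) are both correct and clarify why $C_{2k}$-freeness is essential. One small caveat: in \cite{GGyMV} the logical order is presumably reversed --- the bound $\sum_{u,v}f^2(u,v)=O(n^2)$ is the primitive estimate from which $\cN(C_6,G)=O(n^2)$ (the $\ell=6$, i.e.\ $\ell$ divisible by $3$, case of Proposition \ref{korok}) is deduced --- so deriving \textbf{(iii)} from the $C_6$-count is harmless only because the present paper takes that $C_6$-count as known; if one instead had to prove it, your argument for \textbf{(iii)} would be circular, and one would need the direct proof of the $f^2$-bound.
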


Note that \textbf{(i)} follows from Lemmas 5.1 and 5.2, \textbf{(ii)} follows from Claim 17 and \textbf{(iii)} follows from (5.1) and \textbf{(iv)} is a special case of Theorem 2.6 in \cite{GGyMV}.
We remark that in addition to using the above results, the rest of the proof also closely follows the proof of Theorem 2.6 from \cite{GGyMV}.

\begin{proof}[Proof of Proposition \ref{korok}] Assume that $\ell$ is not divisible by 3 (otherwise \textbf{(iv)} of Proposition \ref{korrok} completes the proof) and let $m=\lceil \ell/3\rceil$. Let $G$ be an $n$-vertex $\{C_3,C_4,C_{2k}\}$-free graph and fix $v_1,\dots,v_m$. Let us count the copies of $C_\ell$ where $v_i$ is the $(3i-2)$nd vertex. Then $v_m$ is the $\ell$'th vertex or the $\ell-1$'st vertex, in the second case there is at most one way to pick the $\ell$'th vertex. Thus to complete the cycle, we need to pick paths of length 3 between the consecutive vertices, hence there are at most $\prod_{i=1}^{m-1}f(v_i,v_{i+1})$ such cycles.
We obtained that

\[\cN(C_\ell,G)\le\sum_{v_1,\dots,v_m\in V(G)}\prod_{i=1}^{m-1}f(v_i,v_{i+1})\le\sum_{v_1,\dots,v_m\in V(G)}\frac{f(v_1,v_2)^2+f(v_2,v_3)^2}{2}\prod_{i=3}^{m-1}f(v_i,v_{i+1}).\]

Fix two vertices $u$ and $v$ and let us examine what factor $f^2(u,v)$ is multiplied with. It appears in the right hand side of the above inequality if $\{u,v\}$ is either $\{v_1,v_2\}$ or $\{v_2,v_3\}$. This gives four possibilities, we examine the case $u=v_1$ and $v=v_2$, the other three cases work the same way. In our case, $f^2(u,v)$ is multiplied with $\prod_{i=3}^{m-1}f(v_i,v_{i+1})=O(n^{m-3})$ using \textbf{(ii)} of Proposition \ref{korrok}. Therefore, $\cN(C_\ell,G)=O(\sum_{u,v\in V(G)}f^2(u,v)n^{m-3})=O(n^{m-1})$, using \textbf{(iii)} of Proposition \ref{korrok} and completing the proof.
\end{proof}

Now we are ready to prove the rest of Theorem \ref{stars} that we restate here for convenience.

\begin{theorem*}
    \textbf{(ii)} For $r\ge 3$ and sufficiently large $n$, we have that $\ex_r(n,C_4)=e_r(F_n)$.

    \textbf{(iii)}  For any $k>2$ and sufficiently large $n$, $\ex_2(n,\{C_4,C_{2k}\})=e_2(F_n)$.
\end{theorem*}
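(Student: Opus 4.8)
\textbf{Proof plan for Theorem~\ref{stars}(ii) and (iii).}

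For part (ii), the plan is to combine Proposition~\ref{exac} with the stability statement in Lemma~\ref{lemmike}. Let $G$ be an extremal $n$-vertex $C_4$-free graph for $\ex_r(n,C_4)$, $r\ge 3$. By Proposition~\ref{exac} we have $e_r(G)=\sum_{p=1}^r w_p\cN(S_p,G)$ with $w_p>0$ and $w_1=1$. The dominant term $w_r\cN(S_r,G)$ is maximized, up to lower order, by $F_n$ (Proposition~\ref{small}). First I would argue that $G$ must contain a vertex of degree $n-1$: if the maximum degree $\Delta$ were below $n-c'\sqrt n$, the last clause of Lemma~\ref{lemmike} gives $\cN(S_r,G)\le\binom{n-1}{r}-cn^{r-1/2}$ for any constant $c$ we like, and since all other terms $w_p\cN(S_p,G)$ for $p<r$ are $O(n^{r-1})$ (they are at most $\ex(n,S_p,C_4)$, which is $O(n^{r-1})$ for $p\le r-1$ by the $r=1$ bound $\ex(n,C_4)=O(n^{3/2})$ handled separately and Proposition~\ref{small} for $2\le p\le r-1$), this would force $e_r(G)<e_r(F_n)$, contradicting extremality. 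Actually the cleaner route is: if $G$ does not contain $S_n$ then the second clause of Lemma~\ref{lemmike} gives $\cN(S_r,G)\le\binom{n-1}{r}-\Omega(n^{r-1})$, and again the lower-order terms cannot compensate the $\Omega(n^{r-1})$ loss, so $e_r(G)<e_r(F_n)$. Hence $G\supseteq S_n$: there is a vertex $u$ adjacent to all others. Since $G$ is $C_4$-free, the neighborhood of $u$ (all of $V(G)\setminus\{u\}$) spans a graph with no path of length $2$ centered anywhere — i.e. a matching — so $G$ is a subgraph of $F_n$ together with possibly the leftover isolated vertex; comparing degree sequences termwise shows $e_r(G)\le e_r(F_n)$, with equality for $F_n$ itself.

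For part (iii), where $r=2$, the plan is to use the finer structure available when we additionally forbid $C_{2k}$. Here $e_2(G)=\sum_i d_i^2=w_1\cN(S_1,G)+w_2\cN(S_2,G)=2|E(G)|+2\cN(S_2,G)$ (indeed $w_1=1$, and one computes $w_2=2$). The point of also forbidding $C_{2k}$ is Proposition~\ref{korok}: a $\{C_4,C_{2k}\}$-free graph has very few long cycles, and in particular — combined with Proposition~\ref{small} giving $\cN(S_2,G)\le\binom{n-1}{2}$ and $|E(G)|\le|E(F_n)|=\lfloor 3(n-1)/2\rfloor$ — any graph attaining both bounds near-simultaneously must be close to $F_n$. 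Concretely, by part~(i) of Theorem~\ref{stars} (already proved), $e_2(G)\le e_2(F_n)+|E(G)|-|E(F_n)|$, so if $|E(G)|\le|E(F_n)|$ we are done immediately. The remaining case is $|E(G)|>|E(F_n)|$, where $G$ cannot be $\{C_4,C_6,\dots\}$-free, so $G$ contains some even cycle $C_{2\ell}$ with $\ell\ge 3$, $\ell\ne k$. Here is where I would run the stability argument: such a graph, being $C_4$-free with more than $\lfloor3(n-1)/2\rfloor$ edges, must have maximum degree bounded away from $n-1$ (a graph containing $S_n$ and being $C_4$-free has exactly $\lfloor3(n-1)/2\rfloor$ edges), so by the third clause of Lemma~\ref{lemmike} its count of $S_2$'s drops by $\Omega(n^{3/2})$, namely $\cN(S_2,G)\le\binom{n-1}{2}-cn^{3/2}$ for a large constant $c$. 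Since $|E(G)|\le\ex(n,C_4)=(1+o(1))n^{3/2}/2$, we get $e_2(G)=2|E(G)|+2\cN(S_2,G)\le n^{3/2}+(1+o(1))\,2\binom{n-1}{2}-2cn^{3/2}<2\lfloor3(n-1)/2\rfloor+2\binom{n-1}{2}=e_2(F_n)$ once $c$ is chosen large enough relative to the coefficient of $n^{3/2}$ in $\ex(n,C_4)$, a contradiction. This establishes $\ex_2(n,\{C_4,C_{2k}\})=e_2(F_n)$.

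The main obstacle I anticipate is the bookkeeping in part~(iii) around the regime $|E(G)|>|E(F_n)|$: one must be careful that the gain in $|E(G)|$ (which can be as large as $\sim n^{3/2}/2$, contributing $\sim n^{3/2}$ to $e_2$) is strictly dominated by the loss $\Omega(n^{3/2})$ in $2\cN(S_2,G)$ forced by Lemma~\ref{lemmike}, and this requires the constant $c'$ in Lemma~\ref{lemmike} to be taken large enough that the threshold $n-c'\sqrt n$ on $\Delta$ is compatible with "$G$ contains $S_n$ $\Leftrightarrow$ $\Delta=n-1$ and then $|E(G)|\le\lfloor3(n-1)/2\rfloor$". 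In fact it may be cleanest to split on whether $\Delta=n-1$ exactly: if so, $G\subseteq F_n$ (plus an isolated vertex) and $|E(G)|\le|E(F_n)|$, handled by part~(i); if $\Delta\le n-2$, then already $\Delta\le n-c'\sqrt n$ is false in general, so one genuinely needs the quantitative third clause of Lemma~\ref{lemmike}, and the role of forbidding $C_{2k}$ (via Proposition~\ref{korok}) is only to make such graphs $G$ with $|E(G)|$ close to $\ex(n,C_4)$ actually exist and to control the few $C_{2\ell}$'s — so one should double-check whether Proposition~\ref{korok} is needed at all here or whether the edge-count bound $\ex(n,C_4)=(1+o(1))n^{3/2}/2$ already suffices; I expect the latter, with Proposition~\ref{korok} being the tool that handles intermediate $\Delta$ more carefully if the crude bound is not enough.
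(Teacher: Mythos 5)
Your argument for part \textbf{(ii)} is essentially the paper's proof: split on whether $G$ contains $S_n$, use the second clause of Lemma \ref{lemmike} to get a $-\Omega(n^{r-1})$ loss in $\cN(S_r,G)$ otherwise, and note that the only term of $\sum_p w_p\cN(S_p,G)$ that can exceed its value on $F_n$ is the $p=1$ term, which is at most $\ex(n,C_4)=O(n^{3/2})=o(n^{r-1})$ for $r\ge 3$. That part is fine.

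Part \textbf{(iii)} has a genuine gap. Your key quantitative step is to invoke the third clause of Lemma \ref{lemmike} with $r=2$, but that lemma is stated (and proved) only for $r\ge 3$, and its proof gives nothing for $r=2$: the bound there is $\binom{n}{2}\binom{\Delta-2}{r-2}/\binom{r}{2}$, which for $r=2$ collapses to $\binom{n}{2}$ with no dependence on $\Delta$ whatsoever. So small maximum degree does not, by itself, force a deficit in $\cN(S_2,G)$. Your closing suspicion that Proposition \ref{korok} (and hence the forbidden $C_{2k}$) might be dispensable is also wrong in an essential way: for $C_4$ alone the statement fails for $r=2$ (polarity graphs of projective planes are $C_4$-free with $\sim n^{3/2}/2$ edges and almost all degrees $\sim\sqrt n$, giving $e_2\approx n^2+n^{3/2}>e_2(F_n)$), which is exactly why \textbf{(ii)} is restricted to $r\ge 3$. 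The paper's actual mechanism for \textbf{(iii)} is different: it forms the auxiliary graph $G'$ on $V(G)$ whose edges are the pairs with a common neighbor (so $|E(G')|=\cN(S_2,G)$), classifies the $k$-cycles of $G'$ using the $C_{2k}$-freeness of $G$ and the cycle-count bounds of Proposition \ref{korok}, and deduces that when $\Delta<n-c'\sqrt n$ the graph $G'$ misses $\Omega(n^{3/2})$ edges, i.e.\ $\cN(S_2,G)\le\binom{n}{2}-n^{3/2}/3$; this is the $r=2$ substitute for the missing clause of Lemma \ref{lemmike}. Finally, your case split leaves the regime $n-c'\sqrt n\le\Delta\le n-2$ uncovered (you note this yourself but do not resolve it); the paper handles it with a separate switching argument that rewires all vertices outside the big neighborhood onto the high-degree vertex and shows $e_2$ strictly increases unless that exceptional set is empty.
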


\begin{proof}
To prove \textbf{(ii)}, let $G$ be an $n$-vertex $C_4$-free graph. If $G$ contains $S_{n-1}$, then clearly only independent edges can be added, $G$ is a subgraph of $F_n$ and we are done. Otherwise $\cN(S_r,G)<\cN(S_r,F_n)-\Omega(n^2)$ by Lemma \ref{lemmike}, thus $\sum_{p=1}^r w_p\cN(S_p,G)\le \sum_{p=1}^r w_p\cN(S_p,F_n)-\Omega(n^2)+|E(G)|<\sum_{p=1}^r w_p\cN(S_p,F_n)$, where we use that $|E(G)|\le \ex(n,C_4)=o(n^2)$.

Let us continue with \textbf{(iii)}. Let $G$ be an $n$-vertex $\{C_4,C_{2k}\}$-free graph. Let $G'$ be the following auxiliary graph. We let $V(G')=V(G)$ and $uv$ is an edge of $G'$ if $u$ and $v$ have a common neighbor in $G$. Then clearly $\cN(S_2,G)=|E(G')|$. Let us consider a copy $C$ of $C_k$ in $G'$. The $k$ edges correspond to $k$ vertices $v_1,\dots,v_k$ of $G$ that are the common neighbors in $G$ of the endpoints of the edge. If $v_1,\dots,v_k$ are distinct from the vertices of $C$ and from each other, we obtain a $C_{2k}$ in $G$, a contradiction. 

There are copies of $C_k$ in $G'$ where some $v_i$ is one of the vertices of $C_k$. We can find each such $C_k$ in $G'$ by picking $k-1$ vertices $O(n^{k-1})$ ways, then picking a pair of those vertices $\binom{k-1}{2}$ ways, then adding the unique common neighbor of the two vertices picked as the $k$th vertex and ordering the $k$ vertices cyclically, $O(1)$ ways.

Assume now that the $v_i$'s are not in $C$. Then there exist $i\neq j$ such that $v_i=v_j$. Then this vertex is adjacent to at least three vertices of $C$. There are two possibilities. Either all the vertices of $C$ are adjacent to $v_i$, clearly there are $(k-1)!\cN(S_k,G)/2$ copies of such cycles, since we can order the $k$ leaves of an $S_k$ cyclically $(k-1)!/2$ ways. Finally, if $v_i\neq v_m$ for some $m$, then there is a cycle in the subgraph $G$ induced by the vertices of $C$ and the common neighbors. Indeed, there are at least two walks from $v_i$ to $v_m$ in $G$, starting with either neighbor of $v_i$ in $C$. Let us pick a shortest such cycle and denote its length by $\ell$. Observe that vertices of $C$ and the common neighbors alternate here, thus $\ell$ is even and there are $\ell/2$ vertices of $C$ in the cycle.
Proposition \ref{korok} implies that $\cN(C_\ell,G)=O(n^{\ell/2-1})$. 
%Indeed, if $\ell=3$, this follows from a result in \cite{gyoli} on $\ex(n,C_3,C_{2k})$, if $\ell=5$, this follows from a result in \cite{gs} on $\ex(n,C_5,C_{2k})$. If $\ell\ge 6$, then it follows from Proposition \ref{korok}. 
This means that we can count the $k$-cycles in $G'$ of this type by picking an $\ell<2k$ constant many ways, an $\ell$-cycle $O(n^{\ell/2-1})$ many ways, $k-\ell/2$ further vertices $O(n^{k-\ell/2})$ many ways, and order these vertices $O(1)$ ways. Therefore, there are $O(n^{k-1})$ $k$-cycles of this type in $G'$.

%There are at most $O(\sum_{i=1}^{2k-1} \cN(C_i,G)n^{k-i}$ hmm, számít hogy mennyi van ebben a mi $k$-körünkből. legalább a fele, tehát $k-i/2$ van ezen kívül, $i/2$ jön ebből, az még nem elég jó... ha lin foka van valamelyiknek... nagyfokú szomszédjai plusz kinti utak, de kint ha kevés pont van, akkor kevés út is. 4 hosszú út kéne, ja de eleve választunk a nagyfokú pontnak szomszédokat, azokból rögtön kevés él megy ki. persze nem boztos hogy annyira nagyfokú a pont, lehet hogy azt is be kéne látni?... vagy hogy c4 es c2k sincs, akkor kevés más kör van...

%...there We can find each $k$-cycle of this type in $G'$ by picking an $S_3$ in $G$ and then picking $k-3$ other vertices. Finally, 

We consider two cases. 
Assume first that the largest degree $\Delta$ in $G$ is less than $n-c'\sqrt{n}$, where $c'$ is chosen such that Lemma \ref{lemmike} implies that $\cN(S_k,G)\le \binom{n-1}{k}-2n^{k-1/2}/7k!$.
By the above, this implies that there are at most $(k-1)!\binom{n-1}{k}/2-n^{k-1/2}/7k+O(n^{k-1})$ copies of $C_k$ in $G'$. 
%hmm... nem teljesen... $C_6$-ra jó, sőt $C_8$-ra is, mert a 4. csúcsnak 2-vel is kell hogy közös szomszédja legyen... talán indu hogy ha kevés $S_k$, akkor kevés $S_3$? ja ez nem igaz de nem is ez kell... kevés $S_3$ van, ebből kevés $C_k$ kell. Kevés $C_6$, $C_8$ oké. $C_{10}$? vagy $\sqrt{n}$ helyett $\varepsilon n$?... ja nem, hanem hogy többi kört csinál, abbol keves van c4mentesben

We claim that $G'$ has at most $\binom{n}{2}-4n^{3/2}/15$ edges. 
%Assume otherwise and we will obtain an upper bound on the number of missing $k$-cycles to get a contradiction. 
Indeed, we can extend each missing edge $v_1v_2$ by picking $k-2$ further vertices $v_3,\dots, v_k$ to obtain a missing $k$-cycle with vertices in the order $v_1v_2\dots v_k$, at most $(n-2)(n-3)\dots (n-k+1)$ ways, and each missing $C_k$ is counted at most $2k$ times. This shows that there are at most $2n^{k-1/2}/15k$ missing $k$-cycles in $G'$, a contradiction showing that $\cN(S_2,G)\le\binom{n}{2}-4n^{3/2}/15$. Therefore, we have $e_2(G)=2\cN(S_2,G)+|E(G)|<(n-1)^2<e_2(F_n)$ (using that $\ex(n,C_4)\le (1+o(1))n^{3/2}/2$), completing the proof in this case.

Assume now that there is a vertex $u$ in $G$ of degree at least $n-c'\sqrt{n}$. Let $U$ be the neighborhood of $u$ and $U'$ be the rest of the vertices. Then there are at most $O(|U'|^2)$ copies of $S_2$ inside $U'$ and at most $O(|U'|^{3/2})$ edges inside $U'$. There are at most $|U|/2$ edges inside $U$. There are at most $|U'|$ edges between $U$ and $U'$, let $xy$ be one of them. The endpoint $x$ in $U$ is incident only to these edges and one more edge inside $U$, and the edge $ux$. The other endpoint $y$ is adjacent only to $x$ and vertices in $U'$. Therefore, $xy$ is in at most $2|U'|+1$ copies of $S_2$, thus $O(|U'|^{2})$ copies of $S_2$ contain at least one of the edges between $U$ and $U'$. As there are no copies of $S_2$ inside $U$ we obtain that there are $O(|U'|^{2})$ copies of $S_2$ in $G$ that do not contain $u$. 

Let us now delete each edge incident to $U'$ and connect each vertex of $U'$ to $u$. This way we deleted $O(|U'|^2)=O(|U'|\sqrt{n})$ copies of $S_2$ and $O(|U'|^2)=O(|U'|\sqrt{n})$ edges, and added $\Omega(n|U'|)$ copies of $S_2$, thus if $U'\neq\emptyset$, then $e_2$ increases, a contradiction. If $U'=\emptyset$, then it is easy to see that we can only add independent edges to $S_{n-1}$, thus $G$ is a subgraph of $F_n$, completing the proof.
 \end{proof}

\section{The non-degenerate case}\label{four}

We say that a graph $H$ is weakly $F$-Tur\'an-stable if any $n$-vertex $F$-free graph with at least $\ex(n,H,F)-o(n^{|V(H)|})$ copies of $H$ can be turned into a complete $(\chi(F)-1)$-partite graph by removing and adding $o(n^2)$ edges. The first result concerning such graphs is due to Ma and Qiu \cite{mq}, who showed that $K_\ell$ is weakly $F$-Tur\'an-stable for any $F$ with $\chi(F)>\ell$, and used this to prove that $\ex(n,K_\ell,F)=\cN(K_\ell,T(n,\chi(F)-1))+\Theta(\mathrm{biex}(n,F)n^{\ell-2})$. Gerbner \cite{ger2,ger3} showed that this can be generalized to other graphs in place of $K_\ell$. Gerbner and Hama Karim \cite{dahi} showed that complete $\ell$-partite graphs are weakly $F$-Tur\'an-stable for any $F$ with $\chi(F)>\ell$. This implies that stars are weakly $F$-Tur\'an-stable for any non-bipartite graph $F$. 

Observe that a stability property immediately applies to our setting. Assume that an $n$-vertex $F$-free graph $G$ has $e_r(G)\ge\ex_r(n,F)-o(n^{r+1})$. Then we have that $\cN(S_r,G)=e_r(G)/r!+O(n^r)=\ex_r(n,F)/r!-o(n^{r+1})=\ex(n,S_r,F)-o(n^{r+1})$. Therefore, using that $S_r$ is weakly $F$-Tur\'an-stable, we have that $G$ can be turned into a complete $(\chi(F)-1)$-partite graph by removing and adding $o(n^2)$ edges. 

However, the next step in the proofs of analogous results in the generalized Tur\'an setting is to prove a structural theorem on an extremal $n$-vertex graph $G$. That step uses that $G$ contains $\ex(n,H,F)$ copies of $H$. In our setting the extremal graph $G$ has $e_r(G)=\ex_r(n,F)$, but on the number of stars we can only state that $\cN(S_r,G)\ge\ex(n,S_r,F)-O(n^r)$. We cannot use the structural theorem inside a proof in \cite{mq} or the structural theorem in \cite{ger3}. 
%Fortunately, the variant we need also exists. Ja nem, mert ez csak $K_r$-ekre van... 
Gerbner \cite{ger4} proved a variant of the structural theorem for $\ex(n,K_\ell,F)$, where the assumption is that $\cN(K_\ell,G)\ge \ex(n,K_\ell,F)-O(n^{\ell-1})$. Here we extend this to arbitrary $H$ with $\chi(H)=\ell$.

We will use the following simple lemma from \cite{ger2}.

\begin{lemma}\label{neww}
Let us assume that $\chi(H)<\chi(F)$ and $H$ is weakly $F$-Turán-stable, thus $\ex(n,H,F)=\cN(H,T)+o(n^{|V(H)|})$ for some complete $(\chi(F)-1)$-partite $n$-vertex graph $T$. Then every part of $T$ has order $\Omega(n)$.
\end{lemma}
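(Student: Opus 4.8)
The plan is to force the part ratios of $T$ to be essentially optimal for counting copies of $H$, and then to show that the optimal complete $(\chi(F)-1)$-partite graph has all parts of linear size because the relevant counting function attains its maximum only in the interior of the probability simplex. Write $k=\chi(F)-1$ and $h=|V(H)|$. Counting injective homomorphisms of $H$ into a complete $k$-partite graph $K$ with part sizes $n_1,\dots,n_k$ (summing to $n$), colour class by colour class, gives
\[
\cN(H,K)=\frac{n^h}{|\Aut(H)|}\,g\!\left(\tfrac{n_1}{n},\dots,\tfrac{n_k}{n}\right)+O(n^{h-1}),\qquad g(x_1,\dots,x_k)=\sum_{c}\ \prod_{i=1}^{k}x_i^{|(c)^{-1}(i)|},
\]
where $c$ runs over the proper colourings $V(H)\to[k]$; thus $g$ is homogeneous of degree $h$ with non-negative coefficients. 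Every complete $k$-partite graph has chromatic number at most $k<\chi(F)$, hence is $F$-free, so $\ex(n,H,F)\ge\max_{\vec m}\cN(H,K_{\vec m})\ge\frac{M}{|\Aut(H)|}n^h-o(n^h)$, where $M:=\max_{\Delta}g$ on the simplex $\Delta=\{x\ge 0:\sum_ix_i=1\}$ and the last bound comes from evaluating $\cN(H,K_{\vec m})$ at an integer tuple $\vec m$ near $n$ times a maximizer. Comparing with the hypothesis $\ex(n,H,F)=\cN(H,T)+o(n^h)$ and writing $x^{(n)}$ for the tuple of part ratios of $T$, we obtain $g(x^{(n)})\ge M-o(1)$.

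The heart of the proof is the claim that \emph{every maximizer of $g$ on $\Delta$ has all coordinates strictly positive}. Granting it, the (compact) set of maximizers lies in the relatively open simplex, so there is $c_1>0$ with every maximizer having all coordinates at least $c_1$; by continuity and compactness, $M_\delta:=\max\{g(x):x\in\Delta,\ \min_ix_i\le\delta\}<M$ for each $\delta\in(0,c_1)$. If some part of $T$ had size $\le\delta n$ for infinitely many $n$, then for those $n$ we would have $g(x^{(n)})\le M_\delta$, contradicting $g(x^{(n)})\ge M-o(1)$ once $n$ is large. Hence, with such a $\delta$ fixed, every part of $T$ has size $>\delta n$ for all large $n$, which is the assertion.

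To prove the claim, suppose for contradiction that a maximizer $x^*$ lies on the boundary; let $S$ be its support, $m=|S|<k$, and $y$ the restriction of $x^*$ to $S$. Since $M=g(x^*)>0$ (using $g(\tfrac1k,\dots,\tfrac1k)>0$, valid because $\chi(H)\le k$), only colourings using colours in $S$ contribute, so $\chi(H)\le m$. Fix a colour $j\notin S$ and perturb: scale the $S$-coordinates of $x^*$ by $(1-t)$ and put mass $t$ on coordinate $j$. Grouping colourings of $H$ by how many vertices they place on $j$, a direct expansion yields $g(x(t))=(1-t)^hM+t(1-t)^{h-1}\Gamma+O(t^2)$, where $\Gamma=\sum_{v\in V(H)}g_{H-v}(y)$ and $g_{H-v}$ is the degree-$(h-1)$ analogue of $g$ for the graph $H-v$. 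Now extending a proper colouring $c'$ of $H-v$ to $v$ (possible in $|[k]\setminus c'(N(v))|$ ways, with $N(v)$ the neighbourhood of $v$ in $H$) gives the identity $g_{H}(y)=\sum_{c'}\bigl(\sum_{i\notin c'(N(v))}y_i\bigr)\prod_i y_i^{|(c')^{-1}(i)|}$, so $g_{H-v}(y)\ge g_H(y)$ because $\sum_iy_i=1$; the inequality is strict whenever $v$ has a neighbour (take a proper colouring of $H-v$ with colours in $S$, which exists as $\chi(H-v)\le m$, and which then colours that neighbour with a colour of positive $y$-value), and such a $v$ exists since $H$ has an edge — if $H$ were edgeless it would fail to be weakly $F$-Tur\'an-stable for $\chi(F)\ge 3$, so we may assume it has one. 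Summing over $v$ gives $\Gamma>hM$, hence $g(x(t))-M=t(\Gamma-hM)+O(t^2)>0$ for small $t>0$, contradicting maximality; therefore $m=k$.

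I expect the main difficulty to be this perturbation step: keeping track of the colourings of $H$ according to their multiplicity on the new colour, verifying that the first-order cost of the global $(1-t)$ scaling is exactly $hM$, and proving the strict pointwise inequality $g_{H-v}>g_H$ so that using the new colour even once already outweighs that cost. The reduction to near-optimality of the ratios of $T$ and the concluding compactness-and-gap argument are routine.
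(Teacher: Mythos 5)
The paper does not actually prove this lemma; it is imported from \cite{ger2} with only a citation, so there is no in-text argument to compare yours against. Judged on its own, your proof is correct. The reduction of $\cN(H,K)$ to the polynomial $g$ on the simplex, the compactness-and-gap step (maximizers form a compact subset of the open simplex, hence $M_\delta<M$ for small $\delta$, hence near-optimal ratio vectors stay $\delta$-away from the boundary), and the perturbation step all check out: the decomposition $g(x(t))=(1-t)^hM+t(1-t)^{h-1}\Gamma+O(t^2)$ is valid because colourings placing $\ge 2$ vertices on the new colour contribute $O(t^2)$, the identity $g_H(y)=\sum_{c'}\bigl(\sum_{i\notin c'(N(v))}y_i\bigr)\prod_iy_i^{|(c')^{-1}(i)|}$ is the correct ``extend a colouring of $H-v$'' bookkeeping, and strictness of $g_{H-v}(y)>g_H(y)$ for a non-isolated $v$ follows from exhibiting one positive monomial whose weight $\sum_{i\notin c'(N(v))}y_i$ is strictly below $1$. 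Two minor points you should make explicit rather than leave implicit: the lemma silently assumes $H$ has at least one edge (for edgeless $H$ the conclusion is simply false, and whether weak stability excludes that case depends on whether one allows empty parts in a ``complete $(\chi(F)-1)$-partite graph''), and the case $\chi(F)=2$ is vacuous. Compared with what one would expect the original proof in \cite{ger2} to be --- typically a direct shifting argument moving vertices from a large part into an $o(n)$ part and counting the $\Omega(n^{|V(H)|})$ gain in copies of $H$ --- your route is more analytic (optimization over the simplex plus compactness) but also more self-contained, since it identifies exactly where interiority of the maximizer comes from, namely the strict inequality $g_{H-v}>g_H$ driven by an edge of $H$.
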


Let $\sigma(F)$ denote the smallest possible order of a color class in a proper $\chi(F)$-coloring of $F$.

\begin{thm}\label{expa} Let $\chi(F)=k+1>\chi(H)$, $H$ be weakly $F$-Tur\'an-stable, $\zeta>0$, $n$ sufficiently large and $G$ be an $n$-vertex $F$-free graph with $\cN(H,G) \ge \ex(n,H, F)-\zeta n^{|V(H)|-1}$. Then there is a $k$-partition of $V(G)$ to $V_1,\dots,V_k$, a constant $K=K(F,\zeta)$, a set $B$ of at most $K(\sigma(F)-1)$ vertices and a set $U$ of at most $K$ vertices such that each member of $\cD(F)$ inside a part shares at least two vertices with $B$, every vertex of $B$ is adjacent to $\Omega(n)$ vertices in each part, every vertex of $U\cap V_i$ is adjacent to $o(n)$ vertices in $V_i$ and every vertex of $V_i\setminus (B\cup U)$ is adjacent to $o(n)$ vertices in $V_i$ and all but $o(n)$ vertices in $V_j$ with $j\neq i$. Moreover, if $\zeta$ is sufficiently small, then $U=\emptyset$ for sufficiently large $n$.\end{thm}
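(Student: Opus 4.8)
I would prove this by adapting the cleaning argument of Gerbner \cite{ger4} for $\ex(n,K_\ell,F)$, the new ingredients being that $H$ (not merely a clique) is weakly $F$-Tur\'an-stable and an estimate for the number of copies of $H$ through a single vertex. First: since $\cN(H,G)\ge\ex(n,H,F)-\zeta n^{|V(H)|-1}\ge\ex(n,H,F)-o(n^{|V(H)|})$ and $H$ is weakly $F$-Tur\'an-stable, there is a $k$-partition $V_1,\dots,V_k$ of $V(G)$ with $\sum_i e(G[V_i])+\sum_{i<j}(|V_i||V_j|-e_G(V_i,V_j))=o(n^2)$, and every part has order $\Omega(n)$ by Lemma \ref{neww}. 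Passing if necessary to a partition maximising the number of edges between parts (which a standard argument keeps within the same $o(n^2)$ bound and with parts of order $\Omega(n)$), I may also assume $d_{V_i}(v)\le d_{V_j}(v)$ for every $v\in V_i$ and every $j$. Then all but $o(n)$ vertices are \emph{typical} --- adjacent to $o(n)$ vertices of their own part and to all but $o(n)$ vertices of each other part --- and I set $B$ to be the vertices that are adjacent to $\Omega(n)$ vertices in each part, and $U$ to be the remaining non-typical vertices.

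I would bound $|U|$ by a counting argument, since the rest uses $|U|=O(1)$. A typical vertex lies in $(c+o(1))n^{|V(H)|-1}$ copies of $H$, where $c\,n^{|V(H)|-1}$ is the per-vertex growth rate of $\ex(n,H,F)$; a vertex of $U$, having a linear deficit in its adjacency to some part (or $\omega(1)$ neighbours inside its own part that are of no use), lies in at most $(c-\varepsilon)n^{|V(H)|-1}$ copies for some fixed $\varepsilon=\varepsilon(F,H)>0$, where I use that $H$ has no isolated vertex (harmless). Deleting $U$ and comparing $\cN(H,G)\le\ex(n-|U|,H,F)+|U|(c-\varepsilon)n^{|V(H)|-1}+O(|U|^2n^{|V(H)|-2})$ with $\ex(n,H,F)\ge\ex(n-|U|,H,F)+|U|c\,n^{|V(H)|-1}-O(|U|^2n^{|V(H)|-2})$ gives $|U|\le K$ for some $K=K(F,\zeta)$; applied to a single vertex of $U$ the same comparison yields $\cN(H,G)\le\ex(n,H,F)-(\varepsilon/2)n^{|V(H)|-1}$ once $U\ne\emptyset$ and $n$ is large, so $U=\emptyset$ when $\zeta<\varepsilon/2$.

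For $B$, the crucial structural claim is that every copy of a member $D\in\cD(F)$ that lies inside a single part $V_i$ meets $B$ in at least two vertices. If not, the copy has $|V(D)|-O(1)$ typical vertices (it has bounded size while $|U|=O(1)$), so, writing $D$ as the union of two colour classes of a proper $\chi(F)$-colouring of $F$, one greedily places the other $k-1$ colour classes on typical vertices of the parts other than $V_i$ --- each required adjacency to a typical or $B$-vertex fails for only $o(n)$, resp.\ $O(1)$, many vertices, and the $O(1)$ vertices of $U$ possibly met by the copy of $D$ are accommodated by a short analysis of their neighbourhoods --- thereby embedding $F$, a contradiction. Hence each $G[V_i\setminus B]$ is $\cD(F)$-free, and, exactly as in \cite{ger4}, a counting-plus-embedding argument caps $|B|$ at $K(\sigma(F)-1)$ (enlarging $K$), which is where $\sigma(F)$ enters. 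This, with the first two steps, gives the stated partition.

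The hard part is this last step: for a clique $\cD(F)$ is essentially trivial and the count of copies through a vertex is immediate, while here one must run the embedding of $F$ starting from an arbitrary bipartite $D\in\cD(F)$ and --- more delicately --- obtain the number of copies of $H$ through typical versus atypical vertices with the \emph{matching} leading constant $c$, combining weak $F$-Tur\'an-stability (to identify the growth rate of $\ex(n,H,F)$) with a direct estimate. Keeping track of how $K$, $\varepsilon$ and the various $o(n)$ error terms depend on $\zeta$ and $F$, and checking that the max-cut partition inherits the stability bounds, is the remaining bookkeeping.
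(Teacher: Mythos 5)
Your proposal follows essentially the same route as the paper's (itself only a sketch deferring to \cite{mq,ger3,ger4}): weak Tur\'an-stability gives the $k$-partition, $B$ and $U$ are defined by neighbourhood thresholds, the $\cD(F)$ condition and the bound $K(\sigma(F)-1)$ on $|B|$ are imported unchanged from \cite{ger3}, and the one genuinely new point --- that the argument which forces $U=\emptyset$ in the exact extremal case now only yields $|U|\le K$ because of the $\zeta n^{|V(H)|-1}$ slack, with $U=\emptyset$ once $\zeta$ is small --- appears in both. The only cosmetic difference is that you bound $|U|$ by deleting its vertices and comparing with $\ex(n-|U|,H,F)$, whereas the paper relocates each $U$-vertex to gain $\Omega(\beta n^{|V(H)|-1})$ copies per vertex; both comparisons work for the same reason.
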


The proof is almost identical to the proofs in \cite{mq,ger3}, thus we only give a sketch. In fact, this sketch is almost identical to the sketch in \cite{ger4}.

\begin{proof}[Sketch of proof] Let us pick numbers $\alpha,\beta,\gamma,\varepsilon>0$ in this order, such that each is
sufficiently small compared to the previous one, and after that we pick $n$ that is sufficiently
large. By the weak Turán-stability, $G$ is close to the some complete $k$-partite graph. More precisely, we can partition the vertex set into $V_1,\dots,V_k$ such that at most $\varepsilon n^2$ edges are missing between parts and $|V_i|\ge \alpha n$ for each $i$ by Lemma \ref{neww}.
Let $B_i$
denote the set of vertices in $V_i$ with at least $\gamma n$ neighbors in $V_i$ and $B:=\cup_{i=1}^k B_i$.

The proof of the upper bound on $|B|$ in \cite{ger3} (and in \cite{mq}) does not use the assumption that $\cN(H,G)=\ex(n,H,F)$, thus it holds in our setting. Let $U_i$ denote the set of vertices in $V_i$ with at least $\beta n$ non-neighbors in some other part $V_j$ and $U:=\cup_{i=1}^k U_i$. In the case $G$ is extremal, it was shown in \cite{mq,ger3} that $U$ is empty, by showing that we can change $G$ to another $F$-free graph $G'$ with $\cN(H,G')\ge\cN(H,G)+|U|\beta n^{|V(H)|-1}/2^{|V(H)|}$. For us, it just shows that $|U|$ is bounded above by a constant $K$, since $\cN(H,G')\le \ex(n,H,G)\le \cN(H,G)+\zeta n^{|V(H)|-1}$. If $\zeta$ is sufficiently small compared to $\beta$, then this implies $|U|=0$.

Finally, the assumption on $\cD(F)$ follows as in \cite{ger3}.
\end{proof}

We remark that in \cite{ger4} the special case $H=K_\ell$ was needed also because we were interested in graphs that were extremal for a related problem. Using that, it was not hard to show that $U$ is empty. Similarly, here we can also show that $U$ must be empty. 

\begin{corollary}\label{cori}
    Let $\chi(F)=k+1>2$, $n$ sufficiently large and $G$ be an $n$-vertex $F$-free graph with $e_r(G)=\ex_r(n,F)$. Then there is a $k$-partition of $V(G)$ to $V_1,\dots,V_k$, a constant $K=K(F)$, a set $B$ of at most $K(\sigma(F)-1)$ vertices such that each member of $\cD(F)$ inside a part shares at least two vertices with $B$, every vertex of $B$ is adjacent to $\Omega(n)$ vertices in each part and every vertex of $V_i\setminus B$ is adjacent to $o(n)$ vertices in $V_i$ and all but $o(n)$ vertices in $V_j$ with $j\neq i$.
\end{corollary}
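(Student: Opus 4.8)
The plan is to derive the corollary from Theorem \ref{expa} applied with $H=S_r$, the only subtlety being that an extremal graph for $e_r$ need not be extremal for $\cN(S_r,\cdot)$, only nearly so, and that we must eliminate the exceptional set $U$. First I would recall, as already noted at the start of Section \ref{four}, that $S_r$ is weakly $F$-Tur\'an-stable (it is a complete bipartite graph, so this is the Gerbner--Hama Karim result \cite{dahi}), and that for the extremal graph $G$ we have $\cN(S_r,G)=e_r(G)/r!+O(n^r)=\ex_r(n,F)/r!-O(n^r)=\ex(n,S_r,F)-O(n^r)$ by Propositions \ref{csill} and \ref{exac}. Hence $G$ satisfies the hypothesis of Theorem \ref{expa} with $H=S_r$ and some fixed $\zeta$ (depending only on the implied constant, hence on $F$), which immediately yields the partition $V_1,\dots,V_k$, the bounded set $B$ with the stated $\cD(F)$-covering property and the $\Omega(n)$-degree property, and a bounded set $U$ of size at most $K$ with the stated degree conditions. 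So the entire content left to prove is that $U=\emptyset$.

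To kill $U$, I would run the standard ``cleaning'' argument in the $e_r$ setting directly. Suppose $v\in U\cap V_i$, so $v$ has at least $\beta n$ non-neighbors in some $V_j$, $j\neq i$. Since $v\notin B$ would already force $v$ to have $o(n)$ neighbors in $V_i$, and in any case $|U|,|B|=O(1)$, the vertex $v$ has degree at most $n-\beta n+o(n)=(1-\beta)n+o(n)$. Now delete all edges at $v$ and replace $v$'s neighborhood by: all of $B$, and all but the $\cD(F)$-controlled vertices of $V_j$ for $j \neq i$ — more precisely, make $v$ adjacent to every vertex in $\bigcup_{j\neq i} V_j$ that lies outside $B\cup U$ (there are $n-|V_i|-o(n)\ge (1+\Omega(1))\beta n$ more such vertices than $v$ had neighbors, for an appropriate choice of constants). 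One checks this creates no copy of $F$: any copy of $F$ through $v$ would, after restricting to a $(k+1)$-coloring, place $v$ together with its neighbors in at most $k$ color classes, and the two classes containing $v$ and a fixed neighbor would give a member of $\cD(F)$ avoiding $B$ inside a part, contradicting the covering property of $B$ (this is exactly the argument from \cite{mq,ger3} for why such rewiring is safe). The new graph $G'$ is $F$-free, and since $v$'s degree went up by $\Omega(n)$ while every other degree changed by at most $1$ in absolute value, $e_r(G')\ge e_r(G)+\Omega(n^{r-1})\cdot$(increase in $\deg v$)$-O(n)\cdot O(n^{r-2}) > e_r(G)$, contradicting extremality. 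Therefore $U=\emptyset$.

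Finally, with $U=\emptyset$ the degree conclusions of Theorem \ref{expa} read exactly as in the corollary: every $v\in V_i\setminus B$ is adjacent to $o(n)$ vertices of $V_i$ and — since $v\notin U$ — to all but $o(n)$ vertices of each $V_j$, $j\neq i$; every $v\in B$ is adjacent to $\Omega(n)$ vertices of each part; and each member of $\cD(F)$ inside a part meets $B$ in at least two vertices. The main obstacle is making the rewiring step fully rigorous: one must verify carefully that adding $v$ to $B$ and to the ``generic'' parts of the other classes introduces no forbidden subgraph, which requires the $\cD(F)$-covering property of $B$ together with the fact that the generic vertices of each $V_j$ span a complete $(k-1)$-partite pattern up to $o(n^2)$ edges — so a more careful accounting is needed to ensure that a copy of $F$ using $v$ plus a few $B$-vertices plus generic vertices cannot be assembled. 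This is handled exactly as in \cite{mq,ger3,ger4}, and the bounded sizes of $B$ and $U$ make the error terms negligible.
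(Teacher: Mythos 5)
Your overall strategy matches the paper's: invoke Theorem \ref{expa} with $H=S_r$ (using that $e_r(G)=\ex_r(n,F)$ forces $\cN(S_r,G)\ge \ex(n,S_r,F)-O(n^r)$), and then eliminate $U$ by rewiring a vertex $u\in U_i$ so that its degree, and hence $e_r$, increases, contradicting extremality. The counting half of your rewiring argument is essentially fine (modulo a slip: the loss from the other endpoints should be (number of vertices losing an edge, which is $o(n)$) times $O(n^{r-1})$, not $O(n)\cdot O(n^{r-2})$; the conclusion $e_r(G')>e_r(G)$ still follows because only $o(n)$ vertices actually lose an edge).

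The genuine gap is the $F$-freeness of the rewired graph, which you yourself flag as ``the main obstacle'' and defer to \cite{mq,ger3,ger4}. Your choice of new neighborhood --- \emph{all} vertices of $\bigcup_{j\neq i}V_j\setminus(B\cup U)$ --- makes this verification genuinely delicate: a copy of $F$ in $G'$ through $v$ may use vertices of $B$ and the (up to $\mathrm{biex}(n,F)$) edges inside parts that meet $B$, and your sketched argument (``the two classes containing $v$ and a fixed neighbor would give a member of $\cD(F)$ avoiding $B$ inside a part'') does not rule this out, since the covering property of $B$ only controls members of $\cD(F)$ \emph{avoiding} $B$. The paper sidesteps all of this by rewiring more conservatively: it picks a set $A$ of $|V(F)|$ vertices of $V_i\setminus(B\cup U)$ and joins $u$ only to the \emph{common neighborhood of $A$}. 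Then any copy of $F$ in $G'$ must contain $u$, and $u$ can be swapped for an unused vertex of $A$ (which dominates $u$'s new neighborhood), producing a copy of $F$ in $G$ --- an immediate contradiction requiring no decomposition-family analysis. Since the common neighborhood of $A$ still contains all but $o(n)$ vertices of each $V_j$, $j\neq i$, the degree of $u$ still increases by $\Omega(n)$ and the counting goes through unchanged. If you adopt this choice of target neighborhood, your proof closes the gap and coincides with the paper's.
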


\begin{proof}[Proof of Corollary \ref{cori}]
We apply Theorem \ref{expa}. Observe that we only need to show that $U=\emptyset$. 
Assume that $u\in U_i$ and pick a set $A$ of $|V(F)|$ vertices from $V_i\setminus (B\cup U)$. Observe that the common neighborhood of the vertices in $A$ contains all but $o(n)$ vertices of every $V_j$ with $j\neq i$.

Now we delete all the edges incident to $u$ and add all the edges from $u$ to each vertex in the common neighborhood of $A$. We claim that the resulting graph $G'$ is $F$-free. Indeed, a copy of $F$ has to contain a new edge, and thus has to contain $u$. But $u$ could be replaced by any vertex in $A$ not in this copy of $F$, to obtain another copy of $F$ in $G'$ that is also in $G$, a contradiction.

Observe that we actually removed $o(n)$ edges, thus removed $o(n^{r})$ copies of $S_r$ (since each edge is in $O(n^{r-1})$ copies of $S_r$). On the other hand, there are $\Omega(n)$ new neighbors of $u$, which creates $\Omega(n^r)$ new copies of $S_r$. This means that $e_r(G)$ also decreases by $o(n^{r})$ and increases by $\Omega(n^r)$, thus $e_r(G')>e_r(G)=\ex_r(n,F)$, a contradiction. 
\end{proof}

Now we are ready to prove Theorem \ref{biexes}.

\begin{proof}
    We apply Corollary \ref{cori}. Let $\chi(F)=k+1$ and $T$ be the complete $k$-partite graph with parts $V_1,\dots,V_k$. Let us prove the upper bound. We will deal with the number of stars in $G$ instead of $e_r(G)$; by Proposition \ref{exac} it is enough to show that the stars that are not in $T$ give $O(\mathrm{biex}(n,F)n^{r-1})$ to the sum. 
    
    The stars outside $T$ each contain an edge inside a part $V_i$. Let us consider first the stars that contain an edge inside a part but no vertex from $B$. Observe that $V_i\setminus B$ is $\cD(F)$-free, hence there are at most $\mathrm{biex}(n,F)$ edges inside $V_i\setminus B$. This implies that there are at most $k\mathrm{biex}(n,F)n^{r-1}$ such stars with at most $r$ leaves. The rest of the stars each contain at least one of the at most $K(\sigma(F)-1)$ vertices of $B$ and at most $r$ other vertices. If $\sigma(F)=1$, then this is 0 and we are done. If $\sigma(F)>1$, then $\mathrm{biex}(n,F)>n-1$, since the star is $\cD(F)$-free. Therefore, $K(\sigma(F)-1)n^r=O(\mathrm{biex}(n,F)n^{r-1})$ and we are done.

    For the lower bound, consider $T$ and place a $\cD(F)$-free graph with $\mathrm{biex}(|V_1|,F)$ edges into $V_1$. It is easy to see that the resulting graph is $F$-free and contains $\Theta(\mathrm{biex}(n,F)n^{r-1})$ copies of $S_r$ that are not in $T$. The contribution of those stars to the sum in Proposition \ref{exac} is $\Theta(\mathrm{biex}(n,F)n^{r-1})$, completing the proof.
\end{proof}

Let us continue with a sketch of the proof of Theorem \ref{labe}.
For each of the statements, it was shown in \cite{ger3} that the same graphs are extremal when the same graph $F$ is forbidden and we count weakly $F$-Turán-stable graphs. If we look at the proofs in \cite{ger3}, we can see that each goes the following natural way: we apply the structural theorem to obtain $k$ parts and then we deal with the edges inside parts. This means that if we already have a $k$-partition given, then the largest number of copies of $H$ can be obtained if the edges inside the parts are added analogously ($s-1$ vertices inside some parts are connected to everything in \textbf{(i)}, adding almost $(a-1)$-regular graphs into each part in \textbf{(ii)} and adding a single edge into one of the parts in \textbf{(iii)}).

Consider now our problem. We apply Corollary \ref{cori}, and we obtain $k$ parts. After that, by the above argument, the same set of edges inside parts creates the most additional copies of any weakly $F$-Turán-stable graph $H$. In particular, the same set of edges creates the most copies of each of $S_1,S_2,\dots,S_r$. Note that this does not necessarily hold for every $F$; this is a property of the constructions presented here that they do not depend on $H$. By creating the most copies of the stars, the same set of edges comes with the largest $\sum_{p=1}^r w_p\cN(S_p,G)$, which is equal to $e_r(G)$ by Proposition \ref{exac}.

Finally, we present the proof of Theorem \ref{newmain} that we restate here for convenience.

\begin{theorem*}
    Let $\mathrm{biex}(n,F)=O(1)$ and assume that for sufficiently large $n$ the extremal graphs for $\ex(n,F)$ each have the following form: we take a Tur\'an graph $T(n,\chi(F)-1)$ and add a $\cD(F)$-free graph with $\mathrm{biex}(n,F)$ edges to one of the parts. Then for sufficiently large $n$ the extremal graphs for $\ex(n,S_r,F)$ and $\ex_r(n,F)$ each have the following form: we take a complete $(\chi(F)-1)$-partite graph and add a $\cD(F)$-free graph with $\mathrm{biex}(n,F)$ edges to one of the parts.
\end{theorem*}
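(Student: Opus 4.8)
The plan is to run the argument through Corollary~\ref{cori} and the $\ex(n,S_r,F)$ version of it, and then to leverage the hypothesis on the extremal graphs for $\ex(n,F)$ (i.e., for $r=1$) to pin down exactly which edges inside the parts are optimal. First I would fix a large $n$ and an extremal graph $G$ for $\ex_r(n,F)$ (or for $\ex(n,S_r,F)$; by Proposition~\ref{exac} and the weak Tur\'an-stability of stars the two cases run in parallel). Applying Corollary~\ref{cori} gives a $k$-partition $V_1,\dots,V_k$ with $k=\chi(F)-1$, a bounded set $B$ meeting each member of $\cD(F)$ inside a part in at least two vertices, with every vertex of $B$ adjacent to $\Omega(n)$ vertices in each part and every vertex outside $B$ adjacent to all but $o(n)$ vertices of each other part and only $o(n)$ vertices of its own part. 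Since $\mathrm{biex}(n,F)=O(1)$, Theorem~\ref{biexes} already tells us $\ex_r(n,F)=e_r(T)+\Theta(1)\cdot n^{r-1}$, so the ``profit'' we are fighting over is only of order $n^{r-1}$, and in particular $B$ can contribute at most $O(n^{r-1})$ extra stars only if $\sigma(F)=1$ — I would first argue that in fact $B=\emptyset$ (else some $\cD(F)$-member forces $\mathrm{biex}$ to grow, contradicting $\mathrm{biex}(n,F)=O(1)$, so $\sigma(F)=1$ and $B$ is not needed for any $\cD(F)$-obstruction; a local switching move as in the proof of Corollary~\ref{cori}, deleting edges at a would-be $B$-vertex and re-attaching it across parts, then removes it without loss).

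Next I would show the parts are \emph{almost balanced}: given the $k$-partition, all the stars of $G$ are, up to an $o(n^r)$ error that comes from the $o(n)$ missing/extra adjacencies at each vertex, the stars of the complete $k$-partite graph on these part sizes plus the stars through the $O(1)$ edges inside the parts. The complete $k$-partite term $e_r$ of the part sizes, as a function of the $|V_i|$, is (by the standard smoothing/convexity argument, as in Caro--Yuster) maximized only at sizes differing by $O(1)$, and more precisely the hypothesis says that for $r=1$ the maximiser is exactly the Tur\'an partition; here we only need that the optimal sizes are $n/k+O(1)$, which suffices because the inside-part stars and the balancing gain are both of order $n^{r-1}$ and do not interact at leading order with a reshuffle of $\Theta(n)$ vertices (such a reshuffle would cost $\Theta(n^r)$). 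So $|V_i|=n/k+O(1)$ and the complete $k$-partite skeleton of $G$ is, up to $O(1)$ vertices moved, a $T(n,k)$; combined with $B=\emptyset$ this says $G$ differs from $T(n,k)$ plus $O(1)$ edges inside parts by $o(n^2)$ edges, and in fact — using the switching move once more to repair the $o(n)$ defects at each vertex without decreasing $e_r$ — $G$ is \emph{exactly} a complete $k$-partite graph plus a bounded graph $D$ placed inside the parts.

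It then remains to identify $D$. Writing $G=G(T,D)$ where $T$ is complete $k$-partite and $D$ sits inside the parts, I would compute $e_r(G(T,D))=e_r(T)+\Phi(T,D)$ where $\Phi$ counts the stars that use at least one edge of $D$; since $|E(D)|=O(1)$ and $T$ has all part-degrees $n/k+O(1)$, $\Phi(T,D)=c_r|E(D)|\,n^{r-1}+O(n^{r-2})$ for an explicit positive constant $c_r$ depending only on $r$ and $k$ (each $D$-edge lies in $\sim 2\binom{n-n/k-1}{r-1}$ copies of $S_r$; the lower-order corrections and overlaps between $D$-edges are $O(n^{r-2})$ because $D$ is bounded). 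For $\ex(n,S_r,F)$ the analogous count gives $\cN(S_r,G(T,D))=\binom{?}{?}$-type main term plus $c'_r|E(D)|n^{r-1}+O(n^{r-2})$. In both cases the leading correction is a positive constant times $|E(D)|$, so $G$ extremal forces $|E(D)|$ to be as large as possible \emph{subject to $G$ being $F$-free}; and $G=G(T,D)$ with $T$ complete $k$-partite is $F$-free iff $D$ is $\cD(F)$-free, so $|E(D)|=\mathrm{biex}(n,F)$, which is the asserted form. (The $r=1$ hypothesis is what guarantees this maximum $\mathrm{biex}(n,F)$ is actually attainable together with balanced parts, i.e.\ that no parity obstruction forces $|E(D)|<\mathrm{biex}(n,F)$ in the optimal configuration.)

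The main obstacle I expect is the passage from ``$G$ is $o(n^2)$-close to $T(n,k)+O(1)$ edges'' to ``$G$ is \emph{exactly} complete $k$-partite plus a bounded inside-parts graph'': one has to rule out, at the exact (not asymptotic) level, configurations where a vertex has $o(n)$ but nonzero non-neighbours in other parts or $\omega(1)$ but $o(n)$ neighbours inside its part, trading $n^{r-1}$-order quantities against each other. This is handled by the local switching argument (delete all edges at such a vertex, reconnect it to the common neighbourhood of a large clean clique transversal in its class, which keeps $F$-freeness by the Corollary~\ref{cori} structure and strictly increases $e_r$ unless the vertex was already ``clean''), but making the bookkeeping tight — especially coordinating it with the balancing of the $|V_i|$ and with the $r=1$ hypothesis being used only as a black box about the $r=1$ optimum — is the delicate part.
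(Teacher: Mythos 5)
You start from the right place (Corollary~\ref{cori}, and the observation that $\mathrm{biex}(n,F)=O(1)$ forces $\sigma(F)=1$ and hence $B=\emptyset$), but the step on which everything downstream rests --- that the optimal part sizes are $n/k+O(1)$ --- is false in general. For $r\ge\chi(F)-1$ the complete multipartite graph maximizing $e_r$ (equivalently, the number of $S_r$'s) is genuinely unbalanced: already for $k=2$, $r=4$ one checks that $e_4(K_{a,n-a})$ is maximized near $a\approx n/5$, not $a=n/2$; this is exactly why the paper only ever claims extremality of \emph{some} complete $(\chi(F)-1)$-partite graph and explicitly declines to optimize the part sizes. With unbalanced parts your argument breaks in two places: (a) an edge inside part $V_i$ lies in $\sim 2\binom{n-|V_i|}{r-1}$ copies of $S_r$ while a cross edge between $V_i$ and $V_j$ lies in $\sim\binom{n-|V_i|}{r-1}+\binom{n-|V_j|}{r-1}$, so an edge inside the smallest part is in \emph{more} stars than some cross edges, the uniform constant $c_r$ does not exist, and the trade-off between surplus inside-edges and missing cross-edges is not sign-definite at order $n^{r-1}$; (b) your bound $|E(D)|\le\mathrm{biex}(n,F)$, obtained by applying $\ex(n,F)$ to all of $G$, only works if $T$ is the Tur\'an graph --- an unbalanced $T$ has fewer edges than $T(n,k)$, leaving room for many more inside-part edges. (If instead the parts happened to be exactly balanced, the inside-edge and cross-edge star counts agree to leading order and the decision moves to $O(n^{r-2})$, which your error terms swallow; either way the comparison does not close.)

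The paper resolves precisely this difficulty by a different mechanism: it orders the pairs $\{i,j\}$ of parts so that edges between earlier pairs lie in at least as many $S_r$'s (up to $o(n^{r-1})$), proves Claim~\ref{aob} --- for every initial segment $U$ of this order, $G[U]$ has at most $|E(T[U])|+K_0$ edges --- by greedily extending the inside-part edges to a \emph{bounded} complete balanced $k$-partite subgraph via common neighbors and applying the $\ex(m,F)$ hypothesis to that bounded graph, and then matches the $\ell$-th surplus inside-edge against the $\ell$-th missing cross-edge along the ordering. Two further points your proposal does not reach: the conclusion requires all $\mathrm{biex}(n,F)$ added edges to lie in a \emph{single} part, and the case of exactly $K_0$ inside edges spread over several parts with no missing cross edges can only be excluded using the uniqueness clause of the hypothesis on the $\ex(n,F)$-extremal graphs; neither the single-part claim nor this case appears in your argument. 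The ``exactification'' step you flag as delicate is likewise not needed in the paper's route --- it never upgrades $G$ to an exact complete multipartite graph, it only compares $G$ with $T$ edge by edge.
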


%Let us give a short outline of the proof. If $G$ is an $n$-vertex $F$-free graph, then the number of extra edges inside parts is at most the number of missing edges between parts. If the number of copies of $S_r$ containing an edge between parts is at most the number of copies of $S_r$ containing a missing edge, then we are done (with one of the statements). This is not true in general. The number of copies of $S_r$ containing an edge $uv$ depends on the parts that contain $u$ and $v$, apart from a term of lower order of magnitude. Edges with both endpoints in the smallest part are contained in the most stars. We can show that if we order the edges and non-edges according to the number of stars containing them, 

\begin{proof}
    Let $G$ be the extremal graph for $\ex(n,S_r,F)$ or $\ex_r(n,F)$, i.e., an $n$-vertex $F$-free graph satisfying either $\cN(S_r,G)=\ex(n,S_r,F)$ or $e_r(G)=\ex_r(n,F)$. We remark that the extremal graphs may be different from these two problems, but we handle them together. Let us apply Corollary \ref{cori} to bound $\ex_r(n,F)$ and the analogous result from \cite{ger3} to bound $\ex(n,S_r,F)$. In both cases we obtain a $(\chi(F)-1)$-partition to $V_1,\dots,V_k$ of the vertices and a set $B$. Note that the orders of these parts may also be different for the two problems, but we handle them together. Let $\chi(F)=k+1$ and $\mathrm{biex}(n,F)=K_0$. 
    
    We claim that for each $i$, there are at most $K_0$ edges inside $V_i$. Indeed, otherwise, there is a graph $F_0\in \cD(F)$ in $G[V_i]$. Let $F_1$ be the graph obtained by adding a complete $(k-1)$-partite graph $K_{t,\dot,t}$ to $F_0$ and joining each vertex of this complete multipartite graph to each vertex of $F_0$. Then $F_1$ contains $F$ by the definition of $\cD(F)$. Consider now the copy of $F_0$ inside $V_i$. 
    %We will add $t$-element sets of vertices from different sets $V_j$ one by one to find a copy of $F_1$ in $G$. We consider $V_{i'}$ with $i'\neq i$. 
    The vertices of $F_0$ are joined to all but $o(n)$ vertices of $V_j$, thus we can pick $t$ common neighbors. We go through the sets $V_j$, and each time we pick $t$ common neighbors of the $O(1)$ vertices picked earlier. This is doable since each of those vertices is adjacent to all but $o(n)$ vertices of $V_j$. At the end, we obtain a copy of $F_1$ in $G$, a contradiction.
    
    Let $T$ be the complete $k$-partite graph with parts $V_1,\dots,V_k$ and assume that $|V_1|\le |V_2|\le\dots |V_k|$. 
    Each edge of $G$ is between vertices in $V_i$ and $V_j$ for some $i$ and $j$ that may be equal. Observe that for two edges between vertices in $V_i$ and $V_j$, the number of copies of $S_r$ containing them is the same apart from a $o(n^{r-1})$ term, since the neighborhoods of the vertices of those edges in $V_i$ and in $V_j$ are the same apart from $o(n)$ vertices. Moreover, the same holds for non-edges between $V_i$ and $V_j$.
    %for each non-edge $uv$ with $u\in V_i$, $v\in V_j$ the same holds if we add $uv$ to $G$.
    
   % We order the sets $\{i,j\}$ the following way. For each $i,j$ and $\ell,m$ with $i\le \ell$ and $j\le m$, we have that $\{i,j\}$ comes before $\{\ell,m\}$ unless $\{i,j\}=\{\ell,m\}$. If $\{i,j\}$ comes before $\{\ell,m\}$, then the edges between $V_i$ and $V_j$ are in at least as many copies of $S_r$ as the edges between $V_\ell$ and $V_{m}$, apart from a $o(n^{r-1})$ term. We extend this partial ordering to a total ordering the following way. We pick two vertices $v_i,v_i'\in V_i$ for every $i$. For an incomparable pair of sets $\{i,j\}$ and $\{\ell,m\}$, we have that $\{i,j\}$ comes before $\{\ell,m\}$ if more copies of $S_r$ contain $v_iv_j'$ than $v_{\ell}v_m'$ in the graph we obtain from $G$ by adding the edge $v_iv_j'$ and $v_{\ell}v_m'$. Note that we need to pick two vertices in each part to deal with the case $i=j$ or $\ell=m$. After this, we repeat this for another incomparable pair. Note that we cannot do the same for every incomparable pair at the same time, since it is possible that $\{1,4\}$ would come before $\{2\}$ and after $\{3\}$ because of the $o(n^{r-1})$ error terms. Dealing with one incomparable pair at a time, we obtain a total ordering such that if $\{i,j\}$ comes before $\{\ell,m\}$, then the edges between $V_i$ and $V_j$ are in at least as many copies of $S_r$ as the edges between $V_\ell$ and $V_{m}$, apart from a $o(n^{r-1})$ term.

       We will order the sets $\{i,j\}$, where $i$ may be equal to $j$. Let \[f(i)=\binom{\sum_{j\neq i}|V_j|}{r-1},\] then $f(i)+f(j)$ is roughly the number of stars with an edge between $V_i$ and $V_j$ (apart from a $o(n^{r-1})$ term). We let $\{i,j\}$ come before $\{\ell,m\}$ if $f(i)+f(j)>f(\ell)+f(m)$, in the case of equality we order arbitrarily. For the set $\{i,i\}$, we have $f(i)+f(i)$ in the formula deciding the order.

Consider an initial segment of this ordering and let $m$ be the largest integer with $\{m\}$ being in the initial segment. Then each $\{i,j\}$ with $i,j\le m$ is in this segment. Let $U=V_1\cup\dots\cup V_m$.

\begin{clm}\label{aob}
    $G[U]$ contains at most $|E(T[U])|+K_0$ edges. Moreover, if there are more than $K_0$ edges inside parts, then $G[U]$ contains at most $|E(T[U])|+K_0-1$ edges.
\end{clm}
\begin{proof}[Proof of Claim]
    Assume otherwise and let $K_0+K_1$ be the number of edges inside parts in $G[U]$, thus $K_1> 0$. Let $V_i'$ be the set of vertices in $V_i$ incident to edges inside $V_i$. Recall that there are at most $K_0$ edges inside $V_i$, thus $|V_i'|\le 2K_0$. Also recall that for $n\ge n_0$ we have that $\ex(n,F)=|E(T(n,k))|+K_0$. We will extend $V_i'$ to a set $V_i''$ of order $\max\{2K_0,n_0\}$ the following way. We go through the integers from 1 to $m$. For each $i$, we select $\max\{2K_0,n_0\}-|V_i'|$ vertices from $V_i$ that are in the common neighborhood of at most $\max\{2(m-1)K_0,(m-1)n_0\}$ already picked vertices in $V_j'$ or $V_j''$ for some $j\neq i$. This is doable, since those $O(1)$ vertices each have $o(n)$ non-neighbors in $V_i$, while $|V_i|=\Theta(n)$ by Lemma \ref{neww}. Let $H$ denote the resulting subgraph of $G[U]$.

    In $G[U]$, the number of edges inside parts is more than the number of missing edges between parts by at least $K_0+1$ because of our indirect assumption. The same obviously holds for $H$. Let us extend $H$ by $\max\{n_0,2K_0\}$ vertices for $V_{m+1},\dots,V_k$ the same way, by picking common neighbors of the already picked vertices. Then the resulting graph $H'$ is a balanced $k$-partite graph on $k\max\{n_0,2K_0\}$ vertices, and the number of missing edges between parts did not increase, hence it is less than $K_1$. Clearly $H'$ contains at most $\ex(k\max\{n_0,2K_0\},F)=|E(T(k\max\{n_0,2K_0\},k))|+K_0$ edges, hence the number of missing edges between parts is at least $K_1$, a contradiction. To see the moreover part, observe that in the case of equality, we must have $\ex(k\max\{n_0,2K_0\},F)=|E(T(k\max\{n_0,2K_0\},k))|+K_0$, thus there are exactly $K_0$ edges inside parts.
\end{proof}  

Let us return to the proof of the theorem. Assume first that $G$ contains $K_0+K_2$ edges inside parts for some $K_2>0$. Then 
%by the application of Corollary \ref{cori}, 
$K_2\le (k-1)K_0$ and by Claim \ref{aob}, there are at least $K_2+1$ edges missing between parts. We give an 
%partial 
ordering of the edges inside parts and the missing edges between parts according to the parts that contain the endpoints: an edge between $V_i$ and $V_j$ comes before an edge between $V_\ell$ and $V_m$ if $\{i,j\}$ comes before $\{\ell,m\}$. If $\{i,j\}=\{\ell,m\}$, we let the edges be incomparable, thus incomparability is an equivalence relation. 
%When we pick an initial segment in the following, we may need to pick some edges from an equivalence class; we pick them arbitrarily

Let $e_1,\dots, e_{K_0+K_2}$ be the edges inside parts according to the ordering, and $e_1',\dots,e_{K_2+1}'$ be the first $K_2+1$ missing edges between parts according to the ordering. We claim that $e_\ell'$ comes before $e_{K_0+\ell}$. Indeed, let $e_\ell'$ be between $V_i$ and $V_j$ and $e_{K_0+\ell}$ be inside $V_m$. If $e_\ell'$ comes after $e_{K_0+\ell}$, then $G[U]$ contains $e_{K_0+\ell}$ edges inside parts and there are less than $\ell$ edges missing between parts, contradicting Claim \ref{aob}. 

Apart from a $o(n^{r-1})$ term, the number of copies of $S_r$ added to $T$ by having the edges $e_{K_0+1},\dots, e_{K_0+K_2}$ is at most the number of copies of $S_r$ that are removed from $T$ by removing the edges $e_1',\dots,e_{K_2}'$. Apart from a $o(n^{r-1})$ term, the number of copies of $S_r$ added to $T$ by having the edges $e_{1},\dots, e_{K_0}$ is at most the number of copies of $S_r$ we would get by adding $K_0$ edges inside $V_1$. Finally, by removing $e_{K_2+1}'$, we removed $\Theta(n^{r-1})$ copies of $S_r$. Therefore, $\cN(S_r,G)<\cN(S_r,G')-\Theta(n^{r-1})$, where $G'$ is obtained from $T$ by adding a $\cD(F)$-free graph with $K_0$ edges inside $V_1$. This is a contradiction when considering $\ex(n,S_r,F)$. In the case we consider $\ex_r(n,F)$, we also obtain a contradiction using Proposition \ref{exac}, since the smaller stars containing any $e_i$ or $e_i'$ give a $O(n^{r-2})$ term.

Let us assume now that $G$ contains less than $K_0$ edges inside parts. Then clearly these extra edges create less copies of $S_r$ than placing $K_0$ edges inside $V_1$ by $\Theta(n^{r-1})$, thus $\cN(S_r,G)<\cN(S_r,G')-\Theta(n^{r-1})$ and this is a contradiction as above.

Finally, assume that $G$ contains exactly $K_0$ edges inside parts. If they are in the same part, we are done. If they are in at least two parts and there is a missing edge between parts, then again $\cN(S_r,G)<\cN(S_r,G')-\Theta(n^{r-1})$ and we are done. If there are no missing edges between parts, take each vertex incident to an edge inside a part and add further vertices such that altogether we take $\max\{n_0,2K_0\}$ vertices from each part. The resulting graph contains $\ex(k\max\{n_0,2K_0\},F)=T(k\max\{n_0,2K_0\},k)+K_0$ edges on $k\max\{n_0,2K_0\}$, which contradicts our assumption on the uniqueness of the extremal graph for $\ex(n,F)$.
%claim: $o(n^{r-1})$ a diff más csúcsokhoz
    %claim: ha $i\neq j$, akkor $\{i,j\}$ az $\{i\}$ és $\{j\}$ között van   
\end{proof}

\vskip 0.3truecm

\textbf{Funding}: Research supported by the National Research, Development and Innovation Office - NKFIH under the grants FK 132060 and KKP-133819.

\vskip 0.3truecm

%The authors declare that they have no known competing financial interests or personal relationships that could have appeared to influence the work reported in this paper.

\end{document}